\DeclareMathOperator*{\res}{res}
\newtheorem{theorem}{Theorem}[section]
\newtheorem{lemma}[theorem]{Lemma}
\theoremstyle{definition}
\theoremstyle{remark}
\newtheorem{remark}[theorem]{Remark}
\newtheorem{proposition}[theorem]{Proposition}    
\numberwithin{equation}{section}
\def \parity {{p}}
\def \hyp {{}_3F_2} 
\def \l {{\lambda}}
\def \a {{\alpha}}
\def \e {{\epsilon}}
\def \g {{\gamma}}
\def \G {{\Gamma}}
\def \R {{\mathbb R}}
\def \H {{\mathbb H}}
\def \C {{\mathbb C}}
\def \Z {{\mathbb Z}}
\def \N {{\mathbb N}}
\def \Q {{\mathbb Q}}
\def \GmodH {{\Gamma\backslash\H}}
\newcommand{\vol}[1]{\textup{vol}( #1 )}
\newcommand{\abs}[1]{\left\lvert #1 \right\rvert}
\newcommand{\jacobi}[2]{\left(\frac{#1}{#2} \right)}
\author{Yiannis N. Petridis}
\address{Department of Mathematics, University College London, Gower Street, London WC1E 6BT, United Kingdom}
\email{i.petridis@ucl.ac.uk}
\author{Nicole Raulf}
\address{Laboratoire Paul Painlev\'e ,
U.F.R. de Math\'ematiques, Universit\'e Lille 1
Sciences et Technologies,
59 655 Villeneuve d'Ascq C\'edex, France 
}
\email{raulf@math.univ-lille1.fr}
\author{Morten S. Risager}
\address{Department of Mathematical
  Sciences, University of Copenhagen, Universitetsparken 5, 2100
  Copenhagen \O, Denmark}
\email{risager@math.ku.dk}
\keywords{Quantum Limits, Eisenstein series, Multiple Dirichlet series}
\subjclass[2010]{Primary 58J51; Secondary 11F68, 11F72}
\title[Double Dirichlet series and weight 1/2 Eisenstein
series.]{Double Dirichlet series and  quantum unique ergodicity of
  weight 1/2 Eisenstein series }  
\thanks{
The first author was supported by NSF grant DMS-0401318, the second author was partly supported by the Labex CEMPI 
(ANR-11-LABX-0007-01),  and the third author was supported by a Sapere
  Aude grant from The Danish
Natural Science Research Council}
\date{\today}
\begin{document}
\begin{abstract}
The problem of quantum unique ergodicity (QUE) of weight $1/2$ Eisenstein series for $\Gamma_0(4)$ leads to the study of certain double Dirichlet series involving $\hbox{GL}_2$ automorphic forms and Dirichlet characters. We study the analytic properties of this family of double Dirichlet series (analytic continuation, convexity estimate) and prove that a subconvex estimate implies the  QUE result. 
\end{abstract}
\maketitle

\section{Introduction}
An important problem of quantum chaos is to describe the behavior of
eigenfunctions of Laplacians $\phi_{\lambda}$ with eigenvalue $\lambda$, as $\lambda\to\infty$. This
problem has a rich and interesting history
(see e.g. \cite{Snirelcprimeman:1974, Zelditch:1987, Colin-de-Verdiere:1985,
  Zelditch:1992a, Lindenstrauss:2006a, Soundararajan:2010a}). For
the weight $0$ Eisenstein series  $E(z,s)$ on the surface
$\hbox{SL}_2(\Z)\backslash \H$ Luo and Sarnak \cite{LuoSarnak:1995a}
determined the asymptotic behavior of the measures 
$$d\mu_t(z)=\abs{E\left(z,\frac{1}{2}+it\right)}^2d \mu(z)$$
on compact sets. Here $ d\mu(z) = \frac{dx^2 dy^2}{y^2} $ denotes 
the volume element corresponding to the hyperbolic metric on the 
upper half-plane $ \H $.
The main input in doing so was subconvex bounds on certain
standard $\hbox{GL}_1$ and $\hbox{GL}_2$ $L$-functions, namely the Riemann 
zeta function and the $L$-function of a Maa{\ss} cusp form. Their work was 
later generalized to the corresponding micro-local lifts
\cite{Jakobson:1994a},  and other arithmetic symmetric
spaces \cite{Koyama:2000, Truelsen:2011b}. Also for these
generalizations subconvex bounds were at the heart of the proofs.
 In
\cite{PetridisRaulfRisager:2013a} we studied similar questions for
scattering states.

In this paper we study the analogous problem for Eisenstein series of
weight 1/2. To be precise:  Let $E(z,s,1/2)$ be the
weight 1/2 Eisenstein series at the cusp infinity for the group
$\Gamma=\Gamma_0(4)$  (see
Section~\ref{Sec:weight-half-eisensteinseries}). We study
the limiting behavior as $\abs{t}\to\infty$ of 
\begin{equation} \label{changed-measure} 
d\mu_t(z)=\abs{E\left(z,\frac{1}{2}+it,1/2\right)}^2d \mu(z). 
\end{equation}
Since the Fourier coefficients $ \phi_n(s, 1/2) $ 
of $E(z, 1/2+it, 1/2)$ are essentially  
values of Dirichlet $L$-functions on the critical line, see 
\eqref{good-expression}, and, therefore, are not multiplicative, the 
problem is much harder.
The Rankin-Selberg convolutions that appear are not factored into 
standard $L$-functions. Instead, we find that  certain double Dirichlet 
series play a crucial
role. 
The relevant
double Dirichlet series are the following:

Let $\chi$, $\chi'$ be characters mod~$8$, and let $t_n$ be
either the eigenvalue of the Hecke operator $ T_n $ for a weight 
$ 0 $ Maa{\ss} form $ \psi $ on $\Gamma_0(4)\backslash\H$ or $t_n=\tau(n)$ 
be the divisor function. Let $s_0(1-s_0)$ be the corresponding 
Laplace eigenvalue of $ \psi $, with $\Re(s_0)\geq 1/2$, and if 
$t_n=\tau(n)$ let $s_0=1/2$.

We then define 
\begin{equation} \label{introduction-therightseries}
   Z(s,w,\chi,\chi') = 
   \zeta_2(4s-1)\sum_{\substack{n=1\\(n,2)=1}}^\infty \frac{\chi(n) t_{ n} 
   L^*(2w-1/2,n,\chi')}{{n}^{s-w+1/2}}  ,
\end{equation}
where $L^*(w,n,\chi)=q(w,n, \chi)L_2(w,\chi_{n_0}\chi)$. Here
$n_0$ is the squarefree part of $n$, $\chi_{n_0}(c)=\jacobi{n_0}{c}$ 
and $L_2(w,\chi_{n_0}\chi)$ is the standard $L$-function with the
2-factor removed.  The functions $q(w,n, \chi)$ are explicitly given 
so-called {\lq correction polynomials\rq}, see
\eqref{correction-polynomials-definition} below. 
The function $L^*(w,n,\chi)$ may seem strange at first, but it 
occurs naturally as the $n$-th Fourier coefficient of the Eisenstein 
series of weight 1/2, and it has many nice properties. See
e.g.\ \cite{Shimura:1973a} or Section \ref{Sec:weight-half-eisensteinseries} 
below. 

Friedberg and Hoffstein \cite{FriedbergHoffstein:1995a} have studied a
Rankin-Selberg integral (see \eqref{friedberghoffstein} below) which turns out to be a linear combination of
$Z(s,w,\chi,\chi')$ and $Z(s,w,\chi,\chi_4\chi')$, where $\chi_4$ is
the primitive character mod 4. They observed that this admits
meromorphic continuation and that certain linear combinations have a
pole at $(s,w)=(3/4,3/4)$ (in our normalization). They did this in order to
prove  non-vanishing of quadratic twists of 
$\hbox{GL}_2$-$L$-functions at the central point. 

Furthermore similar
series  with higher order twists instead of the quadratic characters
$\chi_{n_0}$ were studied by Brubaker, Bucur, Chinta, Frechette, and
Hoffstein \cite{BrubakerBucurChintaFrechetteHoffstein:2004} in order
to prove non-vanishing of  higher order twists. To understand the new series $Z(s,w,\chi,\chi')$ we follow essentially
the program introduced in \cite{BumpFriedbergHoffstein:1996a} to prove
the following: 

The series defining $Z(s,w,\chi,\chi')$ converges absolutely and 
uniformly in certain regions in $\C^2$, and hence defines an analytic 
function there. 
The functions $Z(s,w,\chi,\chi')$ admit meromorphic continuation 
to $\C^2$ and they satisfy a group of functional equations generated by 
$$\alpha: (s,w)\mapsto (s,1-w),\quad \quad \beta: (s,w)\mapsto (w,s)$$
The functions $ Z(s,w,\chi,\chi')$ grow at most
polynomially for $(\Re(s),\Re(w))$ in compact sets.
For the precise form of the functional equations we refer to Theorems
\ref{first-functional-equation} and \ref{second-functional-equation}. 
The group of functional equations is isomorphic to the dihedral group 
of order 8. A similar result for higher order twists may be found 
in \cite{BrubakerBucurChintaFrechetteHoffstein:2004}.

We want to investigate the growth of $Z(s,w,\chi,\chi')$ in $s$ and
$w$. The notions of analytic conductor and subconvexity are not
completely well-established for general multiple Dirichlet
series. Certain cases are dealt with in 
\cite{ Blomer:2011a, BlomerGoldmakherLouvel:2014} but a general 
theory is missing. 

To define these notions in the present case we note that when $\Re(s), \Re(w)>3/4$ the
function $Z(s,w,\chi,\chi')$  has a representation 
\begin{equation}\label{intro-another-nice-representation}
 Z(s,w,\chi,\chi')=\sum_{\substack{c=1\\(c,2)=1}}^\infty\frac{\chi'(c)L^{**}(s-w+1/2,\psi,c,\chi)}{c^{2w-1/2}},
\end{equation}
where $L^{**}(s,\psi,c,\chi)=Q^*(s,c,\chi)L_2(s,\psi\otimes\tilde\chi_
{c_0}\chi)$ (See
\eqref{another-nice-representation} and Theorem  \ref{second-functional-equation}). Here $c_0$ is the squarefree part of $c$,
$\tilde{\chi}_{c_0}(n)=\jacobi{n}{c_0}$ and  $L_2(s,\psi\otimes\tilde\chi_
{c_0}\chi) $ is the standard $L$-function with the 2-factor
removed. The functions $Q^*(s,c,\chi)$ are explicitly given so-called
\lq correction polynomials'', see \eqref{Q*-polynomials} below.

When proving bounds on standard $L$-functions one usually normalizes
the coefficients to be essentially bounded, at least on average. In our case it is not so
clear how to do that since the true size of $L^{**}(s,\psi,c,\chi)$
is known only conjecturally. If the generalized Lindel\"of hypothesis is true the
coefficients of the series \eqref{intro-another-nice-representation} are essentially bounded. We investigate what happens when this is true on average (over $c$). 
To be precise: we want to know what bound on
$Z(s,w,\chi,\chi')$  can be  proved, if we assume that  the coefficients are
essentially bounded, i.e. if 
\begin{equation}
  \label{lindelof-on-average} 
  \sum_{\substack{c\leq
      X\\(c,2)=1}}\abs{L^{**}(s,\psi,c,\chi)}=O(X^{1+\e}(1+\abs{s})^{\e}) .
\end{equation}
Using the properties of $Q^*(s,c,\chi)$ we will see that this follows
from assuming
\begin{equation} \label{LIND_AVERAGE}
  \sum_{\substack{1\leq c_0\leq X\\c_0 \textrm{odd}\\\textrm{squarefree}}}\abs{L_2(s,\psi\otimes\tilde\chi_
{c_0}\chi)}^2=O(X^{1+\e}(1+\abs{s})^{\e}) \textrm{ when }\Re(s)=1/2. 
\end{equation}
Also it is easy to see that \eqref{lindelof-on-average} implies
\eqref{LIND_AVERAGE} with the exponent 2 replaced by a 1. In particular it implies the generalized Lindel\"of hypothesis in
the $t$ parameter.

We now define the 
analytic conductor of $Z(1/2+it,1/2+iu,\chi,\chi')$ to be 
\begin{equation}\label{conductor-def}
\mathfrak q(t,u)=(1+\abs{t})(1+\abs{t+u})^{2}(1+\abs{u}). 
\end{equation}
Using an approximate functional equation argument for 
$Z(s,w,\chi,\chi')$ we can prove the following bound on the critical 
line: 
\begin{theorem}\label{intro-convexity-bound} 
Assume \eqref{lindelof-on-average}. Then 
\begin{equation} \label{convexity-bound}
Z(1/2+it,1/2+iu,\chi,\chi')=O_{\psi}(\mathfrak q(t,u)^{1/4+\varepsilon}). 
\end{equation}
Unconditionally 
\begin{equation}\label{trivial-bound} 
Z(1/2+it,1/2+iu,\chi,\chi')=O_{\psi}((\mathfrak q(t,u)(1+\abs{t-u})^2)^{1/4+\varepsilon}). 
\end{equation}
\end{theorem}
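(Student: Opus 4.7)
The plan is to prove Theorem~\ref{intro-convexity-bound} by an approximate functional equation argument carried out in both complex variables. Starting from the representation \eqref{intro-another-nice-representation}, I would introduce a smooth cut-off $V$ of compact support and a balancing parameter $X>0$, and write
\begin{equation*}
Z(s,w,\chi,\chi')=\sum_c\frac{\chi'(c)L^{**}(s-w+1/2,\psi,c,\chi)}{c^{2w-1/2}}V\!\left(\frac{c}{X}\right)+\frac{1}{2\pi i}\int_{(\sigma)}Z(s,w+z,\chi,\chi')\tilde V(z)\frac{X^z}{z}\,dz,
\end{equation*}
valid in a region of absolute convergence and then continued meromorphically. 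Shifting the contour to the left across the functional equation $\alpha:(s,w)\mapsto(s,1-w)$ of Theorem~\ref{first-functional-equation} converts the integral into a dual sum weighted by the gamma factors of $\alpha$. Performing the analogous manipulation in the $n$-variable using the representation \eqref{introduction-therightseries} and the symmetry $\beta$ of Theorem~\ref{second-functional-equation} expresses $Z(1/2+it,1/2+iu,\chi,\chi')$ as a smoothed sum over $c\leq C$ and over $n\leq N$, plus negligible tails and residues coming from the known poles of $Z$, all lying off the critical point $(1/2,1/2)$.

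Next I would estimate the two resulting finite sums using the hypothesis \eqref{lindelof-on-average}. By Cauchy--Schwarz together with the explicit form of the correction polynomials $Q^*(s,c,\chi)$ from \eqref{Q*-polynomials},
\begin{equation*}
\sum_{c\leq C}|L^{**}(1/2+i(t-u),\psi,c,\chi)|\ll_\psi C^{1+\varepsilon}(1+|t-u|)^\varepsilon,
\end{equation*}
and after exploiting the oscillation of $c^{-2iu}$ via partial summation the weighted $c$-sum obeys a bound of shape $C^{1/2+\varepsilon}(1+|t-u|)^\varepsilon$. A symmetric estimate of shape $N^{1/2+\varepsilon}(1+|t-u|)^\varepsilon$ governs the dual $n$-sum. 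Choosing $C$ and $N$ to balance these bounds against the gamma-factor weights produced by $\alpha$ and by $\beta\alpha\beta$, and invoking Stirling's formula in the two-variable regime, yields the exponent $1/4+\varepsilon$ in \eqref{convexity-bound}; the factor $(1+|t+u|)^2$ appearing inside $\mathfrak q(t,u)$ is accounted for precisely by the cross-terms in $s+w$ that occur inside the gamma factors of the functional equations.

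For the unconditional bound \eqref{trivial-bound} I would replace \eqref{lindelof-on-average} by the individual convexity bound $L_2(1/2+i(t-u),\psi\otimes\tilde\chi_{c_0}\chi)\ll(c(1+|t-u|)^2)^{1/4+\varepsilon}$ applied uniformly in $c$. Summed trivially over $c\leq C$ this produces a factor $(1+|t-u|)^{1/2}$ loss per $L$-value relative to \eqref{lindelof-on-average}; the two approximate functional equations contribute two such losses, yielding the extra factor $(1+|t-u|)^2$ in \eqref{trivial-bound}. The hardest step will be the two-variable contour analysis: the gamma factors issuing from $\alpha$ and $\beta$ couple $s$ and $w$ through $s+w$, so the shifts and the application of Stirling must be performed uniformly in regimes where $|\Im s|$ and $|\Im w|$ may be of very different sizes, and one must verify that these cross-terms contribute no more growth than is encoded in the factor $(1+|t+u|)^2$ of $\mathfrak q(t,u)$.
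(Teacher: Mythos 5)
Your overall strategy (approximate functional equation plus average bounds on the coefficients) is the right one and is what the paper does, but two of your concrete steps do not work as written. First, your opening identity fails: in the representation \eqref{intro-another-nice-representation} the variable $w$ enters both through the exponent $c^{2w-1/2}$ \emph{and} through the argument $s-w+1/2$ of $L^{**}$, so shifting $w\mapsto w+z$ alone does not produce $\sum_c \chi'(c)L^{**}(s-w+1/2,\psi,c,\chi)c^{-(2w-1/2)}V(c/X)$ plus a shifted integral --- the $z$-integration also acts on the $L^{**}$-factor. The paper's fix is to shift \emph{diagonally}, $(s,w)\mapsto(s+z,w+z)$, so that $s-w$ is frozen and only the exponent moves, and correspondingly to use the composed functional equation $(s,w)\mapsto(1-s,1-w)$ of \eqref{simplified-double-functional-eq}. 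The dual sum is then again a $c$-sum of the same shape (with $t,u$ replaced by $-t,-u$), both sums are effectively truncated at $c\le\sqrt{\mathfrak q(t,u)}$ by the Gamma-ratio $G(1/2\pm it+z,1/2\pm iu+z,0)/G(1/2+it,1/2+iu,0)=O(\mathfrak q(t,u)^{\Re z}e^{2\pi|z|})$, and the exponent $1/4$ falls out of partial summation on $\sum_{c\le C}|L^{**}|\ll C^{1+\e}$ (no oscillation of $c^{-2iu}$ is used, nor needed). In particular there is no balancing of a $c$-sum against an $n$-sum; the paper's own Table~\ref{table-conductors} shows that bringing in the $n$-representation \eqref{introduction-therightseries} yields the strictly larger conductor $(1+\abs{t+u})^2(1+\abs{t-u})^2(1+\abs{t})^2$, so mixing it in cannot improve matters, and the "symmetric $N^{1/2+\e}(1+|t-u|)^\e$" bound you assert for the $n$-sum is not what the available input (\eqref{bounding-coeffients}) gives.

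Second, your derivation of the unconditional bound \eqref{trivial-bound} is not viable. Inserting the individual convexity bound $L_2(1/2+i(t-u),\psi\otimes\tilde\chi_{c_0}\chi)\ll (c(1+|t-u|)^2)^{1/4+\e}$ and summing trivially gives $\sum_{c\le C}c^{1/4}(1+|t-u|)^{1/2}c^{-1/2}\asymp C^{3/4}(1+|t-u|)^{1/2}$, which with $C\asymp\mathfrak q^{1/2}$ is $\mathfrak q^{3/8}(1+|t-u|)^{1/2}$ --- worse than \eqref{trivial-bound}. The point the paper emphasizes is that even the trivial bound needs Lindel\"of \emph{on average in the conductor aspect}: one combines Heath-Brown's large sieve (Theorem~\ref{average}, \eqref{generalized-average}) with Lemma~\ref{Q-on-square-average} and Cauchy--Schwarz to get \eqref{bound-on-coeff} with $a=1/2$, i.e.\ full savings in $c$ and only a convexity loss $(1+|t-u|)^{1/2}$ in the archimedean aspect. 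Your bookkeeping is also off: the extra factor in \eqref{trivial-bound} is $\big((1+|t-u|)^2\big)^{1/4}=(1+|t-u|)^{1/2}$, a \emph{single} convexity loss, not the product of two losses from two functional equations.
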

\begin{remark}
We call the unconditional bound \eqref{trivial-bound} 
the \emph{trivial} bound. The conditional bound
\eqref{convexity-bound} is called the
\emph{convexity} bound. Any bound
$O(\mathfrak q(t,u)^{1/4-\delta})$, with $\delta>0$ is called a \emph{subconvex}
bound with saving $\delta$. If $\delta=1/4-\varepsilon$ is permitted we say that
$Z(s,w,\chi,\chi')$ admits a Lindel\"of type bound.  In the
theory of $L$-functions, the notion of convexity and subconvexity 
is standard and has numerous applications,  see 
e.g. \cite{IwaniecKowalski:2004a}.
\end{remark}
\begin{remark}
 We note that even proving the  \emph{trivial} bound,  requires strong input. In particular, in order to prove
Theorem \ref{intro-convexity-bound} \eqref{trivial-bound}, we need the Lindel\"of hypothesis
on average in the conductor aspect for $L(s,\chi_n)$, and the convexity
estimate in the $s$ aspect. This bound is available, as follows
from Heath-Brown's famous large sieve inequality for quadratic characters 
\eqref{Heath-Brown-large-sieve}, see Theorem \ref{heath-brown-average}
below.

Also we note that we can prove unconditionally (See Lemma \ref{lincombZs} below)
that if $\{t_n\}$ comes from a cusp form $$Z(1/2+it,1/2-it,\chi,\chi')+b
Z(1/2+it,1/2-it,\chi,\chi')=O_{\psi}(\mathfrak
q(t,-t)^{1/4+\varepsilon}).$$ 
Here $b$ is the product of the sign of  $\chi$ and the sign of the
cusp form. We note that this is of the same order as the convexity
estimate above \emph{without} assuming  \eqref{lindelof-on-average}.

\end{remark}

\begin{remark}For special
  configurations of $s,w$  (in our case $s-w$ constant) the trivial
  bound and the convexity bound coincide. This is because in this case
  \eqref{LIND_AVERAGE} follows from Heath-Brown's estimate
  (see Theorem \ref{heath-brown-average} below).

 We emphasize that our notion of convexity is \emph{different} from
  that of
  Blomer, Goldmakher, and Louvel \cite{Blomer:2011a},
  \cite{BlomerGoldmakherLouvel:2014}. What we call the trivial bound
  corresponds to what they call the convexity bound. 
\end{remark}

\begin{remark} Even though we cannot prove it, it is
 not unreasonable to expect subconvexity for $Z(s,w,\chi,\chi')$!
Double Dirichlet series similar to $Z(s,w,\chi,\chi')$ -- with degree-one $L$-functions as
  coefficients -- \emph{are} known to satisfy subconvex bounds due to
  Blomer, Goldmakher, and Louvel \cite{Blomer:2011a},
  \cite{BlomerGoldmakherLouvel:2014}. (In 
  \cite{BlomerGoldmakherLouvel:2014} the authors consider a configuration such that the bound they prove
 would  be considered a subconvex
  bound also by our definition.  Likewise the bound proved in  \cite[Theorem 1]{Blomer:2011a} is a
  subconvex bound by our definition if one restricts to
  $s=1/2$ or $w=1/2$.)

 Furthermore it is known that on average the double Dirichlet series
 considered by Blomer admits  Lindel\"of type bounds \cite[Theorem 2]{Blomer:2011a}) in the $(s,w)$ aspect. In the conductor aspect
  (which is here the conductor related to the form with eigenvalues $\{t_n\}$)
  Hoffstein and Kontorovich
  \cite[(1.23)]{HoffsteinKontorovich:2010} conjecture Lindel\"of
  type bounds to hold.
\end{remark}

\begin{theorem}\label{QUE} Assume that for all $\chi$,$\chi', \{t_n\}$ the function $
  Z (s,w,\chi,\chi')$ admits a subconvex bound. Then
  for any compact Jordan measurable subsets $A$ and $B$ of $\Gamma\backslash \H$
  we have
 \begin{equation}\label{QUE-limit}\frac{\int_{A}\abs{E(z,1/2+it,1/2)}^2d
     \mu(z)}{\int_{B}\abs{E(z,1/2+it,1/2)}^2d
     \mu(z)}\to\frac{\vol{A}}{\vol B},\end{equation} as $\abs{t}\to\infty$.  
\end{theorem}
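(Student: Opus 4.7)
The plan is to follow the Luo--Sarnak strategy \cite{LuoSarnak:1995a}, adapted to weight $1/2$, with the hypothetical subconvex bound for $Z(s,w,\chi,\chi')$ playing the role that subconvexity of standard $L$-functions plays in their argument. First I would approximate the characteristic functions of $A$ and of $B$ from above and below by smooth compactly supported functions whose symmetric differences have arbitrarily small hyperbolic volume; this is possible because $A,B$ are Jordan measurable. It then suffices to establish an asymptotic of the form
\begin{equation*}
\mu_t(f) = C\log\abs{t}\cdot \int_{\Gamma\backslash\H} f\, d\mu + o(\log\abs{t}), \qquad \abs{t}\to\infty,
\end{equation*}
for every smooth compactly supported $f:\Gamma\backslash\H\to\C$ and a universal constant $C>0$; sandwiching $1_A$ and $1_B$ between such $f$ and taking ratios yields \eqref{QUE-limit}.

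To prove this asymptotic I would spectrally expand $f$ as a constant, a sum over an orthonormal basis $\{\psi_j\}$ of Maa{\ss} cusp forms, and an integral over the continuous spectrum $E_\mathfrak{a}(\cdot,1/2+ir)$. Since $f$ is smooth and compactly supported the expansion converges rapidly in Sobolev norms, so it is enough to bound $\mu_t(\psi_j)$ and $\mu_t(E_\mathfrak{a}(\cdot,1/2+ir))$ individually and to identify the main term coming from the constant function. The main term would be extracted by the Rankin--Selberg method applied to an incomplete Eisenstein series $E(\cdot\,|\,h)$: Mellin-inverting $h$ and shifting contours past the polar line produces a residue of the diagonal zeta function which, by \eqref{intro-another-nice-representation}, is exactly a pole of $Z(s,w,\chi,\chi')$ already identified by Friedberg--Hoffstein, giving the $\log\abs{t}$ main term.

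For the cusp form contribution I would unfold one copy of $E(z,1/2+it,1/2)$ at the cusp $\infty$ against the Fourier expansion of the remaining $\overline{E(z,1/2+it,1/2)}$ and of $\psi_j$. The $x$-integration reduces the double sum to its diagonal, and the Mellin transform in $y$ of the product of Whittaker functions yields an explicit ratio of Gamma functions. The remaining $n$-sum is, up to a Gamma and $\zeta_2$ normalization, precisely $Z(1/2+it,1/2-it,\chi,\chi')$ for suitable characters $\chi,\chi'$ determined by the twists arising in the coefficients $\phi_n(1/2+it,1/2)$; in particular one lands at the diagonal point where $\mathfrak{q}(t,-t)=(1+\abs{t})^2$. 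Combining the hypothesised subconvex bound $Z\ll (1+\abs{t})^{1/2-2\delta}$ with Stirling for the Gamma ratio would then yield $\mu_t(\psi_j)\ll (1+\abs{t})^{-\delta'}$ for some $\delta'>0$, which is certainly $o(\log\abs{t})$. The continuous spectrum contribution is treated in the same way, with $\psi_j$ replaced by $E_\mathfrak{a}(\cdot,1/2+ir)$ and the resulting Dirichlet series again identified with $Z$ along a line.

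The main obstacle is the explicit Rankin--Selberg unfolding. Because the Fourier coefficients $\phi_n(s,1/2)$ carry the correction polynomials $q(w,n,\chi)$ and the twisted factors $L_2(w,\chi_{n_0}\chi)$, careful bookkeeping of the characters mod $8$, of the squarefree kernels $n_0$, and of the cuspidal contributions from the three cusps of $\Gamma_0(4)$ is needed in order to recognise the unfolded integral as a finite linear combination of $Z(s,w,\chi,\chi')$'s and to extract the correct residue with the right constant $C$. A secondary difficulty is keeping all estimates uniform in the spectral parameter $s_0$ of $\psi_j$ and in the continuous parameter $r$, so that the spectral expansion of $f$ can be resummed; this requires the polynomial growth of $Z$ in the spectral aspect established earlier in the paper, together with the rapid decay of $\inprod{f}{\psi_j}$ coming from the smoothness of $f$.
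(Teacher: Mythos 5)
Your overall strategy is the one the paper follows (the Luo--Sarnak decomposition into cusp forms and incomplete Eisenstein series, unfolding, identification of the unfolded sums with finite linear combinations of $Z(s,w,\chi,\chi')$, and subconvexity at the special configuration $(1/2+it,1/2-it)$ for the error terms), and the cuspidal part of your sketch is essentially correct modulo the bookkeeping you acknowledge. But there is a genuine gap in your extraction of the main term. You assert that shifting contours ``produces a residue of the diagonal zeta function which \dots is exactly a pole of $Z(s,w,\chi,\chi')$ already identified by Friedberg--Hoffstein, giving the $\log\abs{t}$ main term.'' The Friedberg--Hoffstein pole sits at the fixed point $(s,w)=(3/4,3/4)$ and carries no $t$-dependence; what actually happens is that the Rankin--Selberg transform $R_1(\abs{E(\cdot,1/2+it,1/2)}^2,s)$ has a \emph{double} pole at $s=1$ (in the $Z$-coordinates one is on the polar line $s+w=3/2$ at the moving point $(3/4+it,3/4-it)$), and the coefficient of $1/(s-1)$ depends on $t$. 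Computing that coefficient is the heart of the matter: the paper does it via Zagier--Mizuno's regularized Rankin--Selberg method combined with the Maa{\ss}--Selberg relation, which converts the truncated integral of $\abs{E}^2$ into $-\sum_j\phi_{1j}'\overline{\phi_{1j}}(1/2+it,1/2)/\vol{\GmodH}$ plus bounded terms, after which the explicit formula \eqref{weight-1/2-scattering} for the weight $1/2$ scattering matrix in terms of $\xi(4s-2)/\xi(4s-1)$ yields the logarithmic derivative $-4\log\abs{t}+o(\log\abs{t})$. Nothing in your sketch produces the $\log\abs{t}$ growth or the constant $4/\vol{\GmodH}$; ``a pole of $Z$'' alone does not.

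Two further points. First, after the contour shift the integral over $\Re(s)=1/2$ must itself be shown to be $o(\log\abs{t})$; this is not a formality, because the relevant coefficients are $\abs{\phi_n(1/2+it,1/2)}^2=\phi_n(w,1/2)\phi_n(1-w,1/2)$, whose Dirichlet series is not one of the $Z(s,w,\chi,\chi')$ but the auxiliary series $\hat Z$ of \eqref{onceagainaseries}; reducing $\hat Z$ to the $Z_{\psi_\tau}$'s requires the correction-polynomial identity of Lemma \ref{nontrivialrelation} and Lemma \ref{ZhatZtilde}, and one also needs polynomial growth of $R_1$ on vertical lines (the Phragm\'en--Lindel\"of argument of Lemma \ref{shift}) to justify the shift at all. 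Second, your plan to bound $\mu_t(E_{\mathfrak a}(\cdot,1/2+ir))$ ``individually'' in a pointwise spectral expansion is not well posed: an Eisenstein series is not integrable against $\abs{E(\cdot,1/2+it,1/2)}^2\,d\mu$, so the non-cuspidal part of the spectrum has to be handled through incomplete Eisenstein series throughout (as you in fact do for the main term), not through the continuous spectral expansion evaluated at individual $r$.
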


\begin{remark}\label{LuoSarnak-analogue}
Theorem \ref{QUE} is the analogue of the  Luo--Sarnak theorem 
\cite{LuoSarnak:1995a} for the weight $0$ Eisenstein series. Their theorem, 
however, is unconditional as in their case 
 subconvex
bounds for standard $\hbox{GL}_1$ and $\hbox{GL}_2$-$L$-functions
are readily available. As in \cite{LuoSarnak:1995a} 
we really prove -- conditionally on any subconvex bound -- 
the asymptotic result
\begin{equation} \label{againagain} 
\int_{A}\abs{E(z,1/2+it,1/2)}^2d \mu(z)\sim\frac{4}{\vol{\GmodH}}\vol{A}\log
\abs{t},\quad\quad \abs{t}\to\infty. 
\end{equation}
In contrast to the case of quantum unique ergodicity of Maa\ss \ cusp 
forms, the rate of convergence in (\ref{againagain}) is very slow. As in 
\cite{LuoSarnak:1995a} one can prove $ O(\log t/ \log \log t) $.

It is understood in many arithmetic cases that the equidistribution of masses  is implied by subconvexity bounds
for appropriate $L$-functions of degree 8 (see
 e.g. \cite{Sarnak:2011},
  \cite{Soundararajan:2010}, \cite{NelsonPitaleSaha:2014}). 
\end{remark}
\begin{remark}
The structure of the paper is as follows: In Section\ \ref{sec:doubledirichletseries} 
we study the double Dirichlet series
$Z(s,w,\chi,\chi')$ which arise when we address QUE of the weight 1/2
Eisenstein series $E(z,s,1/2)$. In Section\ \ref{Sec:weight-half-eisensteinseries}
we review the theory for $E(z,s,1/2)$ with explicit computations.  
In Section\ \ref{sec:main-theorem}, which is the main section of the
paper, we analyze \eqref{againagain} by splitting it in a cuspidal 
contribution and incomplete Eisenstein series contributions. E.g. in
the cuspidal space we find that for a cusp form $\psi$ with eigenvalue
$s_0(1-s_0)$, the integral
\begin{equation}
\int_{\GmodH}\psi(z)\abs{E(z,1/2+it,1/2)}^2d \mu(z)\label{unfoldable}
\end{equation}
 equals a linear
combination of terms of the form
\begin{equation}\label{unfolded}
c_{\chi,\chi'_\pm}(s,w)Z(s,w,\chi,\chi')\frac{1}{\Gamma(w\pm 1/4)}\int_0^\infty
W_{0,s_0-1/2}(2y)W_{\pm1/4,w-1/2} (2y)y^{s-1}\frac{dy}{y}
\end{equation}
evaluated at $(s,w)=(1/2+it,1/2-it)$. Here $c_{\chi,\chi'_\pm}(s,w)$
are functions which can easily be understood  when
$\Re(w)=\Re(s)=1/2$, and $W_{\mu,\nu}$ are Whittaker functions. In the
appendix we analyze the Mellin transform of the product of Whittaker
functions.

We can then 
deal with  \eqref{unfolded} using bounds on $Z(s,w,\chi,\chi')$. 
To deal with the cuspidal space we need subconvexity for $ Z(1/2+it,  1/2-it, \chi, \chi') $, with 
$t_n$ corresponding to Hecke eigenvalues for Maa{\ss} forms. 
For the 
incomplete Eisenstein series a similar analysis shows that we need the
same type of bound for $t_n=\tau(n)$ the divisor 
function for all configurations of $s$ and $w$. 
 We also use Zagier's theory of Rankin-Selberg 
integrals for 
functions not of rapid decay.
\end{remark}
\begin{remark}  Although the analytic continuation of 
  \begin{equation*}
I(s,w)=\int_{\Gamma\setminus\H}\psi (z)E(z,w,1/2)
\overline{E(z,\overline s,1/2)}d\mu(z)    
  \end{equation*}
(which specializes to \eqref{unfoldable} for special configuration)
follows from the well known analytic properties of $E(z,w,1/2)$ its
growth/decay properties jointly in $(s,w)$ are less clear. This is why we have to unfold and
eventually analyze $Z(s,w,\chi,\chi')$ to see that the above integral
 is $O(\abs{t}^{-\delta})$ for $s=1-w=1/2+it$ when
$\abs{t}\to\infty$ assuming subconvexity with saving $\delta$. The Maa\ss-Selberg relation
gives an upper bound (see e.g. Lemma \ref{MS-bound} below), but this is not good enough to prove Theorem \ref{QUE}.

\end{remark}
\begin{remark} One could speculate whether the implication in Theorem
  \ref{QUE} could be reversed, i.e. to what extent bounds on integrals
  like  \eqref{unfoldable} would
  imply bounds on $Z(s,w,\chi,\chi')$, via the expression \eqref{unfolded}. Such speculations are problematic at least for
  the following reason: We have good control over the
  asymptotics of the Mellin transform (see e.g.\ Lemma
  \ref{nicoleresult}) but since integrals like \eqref{unfoldable} are \emph{linear
    combinations} of terms of the form \eqref{unfolded}, we cannot
  conclude from bounds on integrals like \eqref{unfoldable} the same bounds on the
  individual summands. We elaborate on this in Lemma
  \ref{lincombZs} and Remark
  \ref{finalremarks} below.
 \end{remark}

\section{A double Dirichlet series}\label{sec:doubledirichletseries}
In this section we define and prove various properties of the double
Dirichlet series .  To prove its
meromorphic continuation and functional equation we proceed as in
\cite{BrubakerBucurChintaFrechetteHoffstein:2004} but with some
simplifications and refinements. We show, for instance, that knowing optimal bounds
towards the Ramanujan-Petersson conjecture is not necessary to get
optimal regions of convergence. To prove the convexity bounds we use a
combination of techniques from \cite{BlomerGoldmakherLouvel:2014} and
\cite{Blomer:2011a}. Although the techniques we use are certainly
known to the experts in the field, we were not able to find precise
enough statements in the existing literature for the double Dirichlet
series \eqref{introduction-therightseries}. 

We start by 
introducing some notation and deriving some basic results about Gauss
sums and Dirichlet series involving Gauss sums.

Let $\{t_n\}_{n\in \mathbb{N}}$ be the coefficients of the normalized
$L$-function
of a self dual $\hbox{GL}_2$ automorphic form $ \psi $. For good
primes, and  we  assume that only $p=2$ could potentially be a bad prime,
 the Satake parameters $\alpha_p$, $\beta_p$ satisfy
$\alpha_p+\beta_p=t_{p}$, $\alpha_p\cdot\beta_p=1$
and \begin{equation}\label{satake}t_{p^\lambda}=\sum_{j=0}^\lambda
  \alpha_p^j\beta_p^{\lambda-j}=\frac{\alpha_p^{\lambda+1}-\beta_p^{\lambda+1}}{\alpha_p-\beta_p}.\end{equation}
The Fourier coefficients satisfy the Ramanujan-Petersson conjecture on
average, since the
Rankin-Selberg method gives
\begin{equation}
  \label{RPC-average}
  \sum_{\abs{n}\leq X } \abs{t_n}^2\sim C X,
\end{equation}
as $X\to \infty$. Here $C$ is an explicit constant, see e.g.\ \cite[(8.15)]{Iwaniec:2002a}. 
The corresponding $p$-factor, i.e.\ the local $L$-function
is given by  $$L^{(p)}(s,\psi)=\sum_{\lambda=0}^\infty
\frac{t_{p^{{\lambda}}}}{p^{\lambda s}}=(1-t_pp^{-s}+p^{-2s})^{-1}=(1-\alpha_pp^{-s})^{-1}(1-\beta_pp^{-s})^{-1}.$$
Similar but easier identities and estimates are true for the divisor function $t_n=\tau(n)$ where
$\alpha_p=\beta_p=1$.

 For any $L$-function we will
write $L^{(p)}(s)$ for its corresponding $p$-factor and $L_2(s)$ for the
$L$-function with the 2-factor removed.

\subsection{Gauss sums and some related series}\label{intro-things}
We now recall a few basic relevant results about Gauss sums for real characters. 
Let $n$, $d$ be  integers with $d$ odd and positive and let $\jacobi{n}{d}$ be 
the Jacobi-Legendre symbol, i.e.
$$\jacobi{n}{d}=\prod_{p^v\|d}\jacobi{n}{p}^v,$$ where for an odd prime $p$ we
denote by $\jacobi{n}{p}$  the usual Legendre symbol. The symbol
$\jacobi{n}{d}$ is then extended to all odd $d\in \mathbb{Z}$ as in
\cite[p.~442]{Shimura:1973a}, see also \cite[p.~147, 187-188]{Koblitz:1984b}.

For an integer $n$ and a positive odd integer $d$ we define Gauss sums \begin{equation}
G_n(d):=\sum_{m\!\!\!\!\pmod d}\jacobi{m}{d}e\left(\frac{nm}{d}\right).
\end{equation}
Here $e(x)=e^{2\pi i x}$. Gauss ingeniously proved  that for odd 
squarefree $d$ we have
$G_{1}(d)=\varepsilon_{d}\sqrt{d}$ where
$\varepsilon_{d}=1$  if $d\equiv 1 (4)$ and $\varepsilon_{d}=i$  if $d\equiv -1 (4)$. Quadratic reciprocity states that
for $n,d$ relatively prime odd positive integers \begin{equation}\label{quad-red}
\Big(\frac{n}{d}\Big) \Big(\frac{d}{n}\Big)=(-1)^{\frac{n-1}{2}\frac{d-1}{2}}.
\end{equation}
It is elementary to verify that the right-hand side equals $\varepsilon_{n}\varepsilon_{d}/\varepsilon_{nd}
$. 
 For odd $d$ it turns out to be convenient to consider 
$$H_n(d):=\varepsilon_d^{-1}G_n(d).$$
\begin{proposition}\label{bigger-prop} The function $H_n(d)$ has the following properties:
\begin{enumerate}
\item \label{factor} For fixed $n$, $H_n(d)$ is multiplicative, i.e.\ if $d_1,d_2$ are
  coprime odd positive integers, then
  \begin{equation*}
    H_n(d_1d_2)=H_n(d_1)H_n(d_2).
  \end{equation*}
\item \label{anotherfactor}
If $(n_1,d)=1$, then
\begin{equation*}
H_{n_1n_2}(d)=\jacobi{n_1}{d}H_{n_2}(d).
\end{equation*}
\item \label{Gauss-basics}Let $\alpha,\beta$ be non-negative integers, and let $p$ be an
  odd prime. Then
  \begin{equation*}
H _{p^\alpha}(p^\beta)= \begin{cases}
\phi(p^\beta),&\textrm{if }\alpha\geq \beta, \beta\equiv 0 (2) ,\\
p^{\beta-1/2}(\delta_{\beta\equiv 1 (2)} -p^{-1/2} \delta_{\beta\equiv 0 (2)}),&\textrm{if }\alpha= \beta-1,\\
0,&\textrm{otherwise.}
  \end{cases}
\end{equation*}
\end{enumerate}
\end{proposition}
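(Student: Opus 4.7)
The plan is to prove all three assertions by direct manipulation of the defining Gauss sum: the Chinese Remainder Theorem together with quadratic reciprocity for (\ref{factor}), a substitution for (\ref{anotherfactor}), and a short case analysis for (\ref{Gauss-basics}).

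For (\ref{factor}), I would parameterize $m\pmod{d_1 d_2}$ by the Chinese Remainder Theorem as $m = m_1 d_2 + m_2 d_1$ with $m_i$ ranging through $\mathbb{Z}/d_i\mathbb{Z}$. Then the multiplicativity of the Jacobi symbol in the bottom variable gives $\jacobi{m}{d_1 d_2} = \jacobi{m_1 d_2}{d_1}\jacobi{m_2 d_1}{d_2} = \jacobi{d_2}{d_1}\jacobi{d_1}{d_2}\jacobi{m_1}{d_1}\jacobi{m_2}{d_2}$, while the exponential splits as $e(n m_1/d_1)\,e(n m_2/d_2)$. This yields $G_n(d_1 d_2) = \jacobi{d_1}{d_2}\jacobi{d_2}{d_1}\,G_n(d_1) G_n(d_2)$, and by (\ref{quad-red}) the product of the two Jacobi symbols equals $\varepsilon_{d_1}\varepsilon_{d_2}\varepsilon_{d_1 d_2}^{-1}$. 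Dividing by $\varepsilon_{d_1 d_2}$ gives $H_n(d_1 d_2) = H_n(d_1) H_n(d_2)$.

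For (\ref{anotherfactor}), since $(n_1,d)=1$ the map $m\mapsto n_1^{-1}m'$ is a bijection of $\mathbb{Z}/d\mathbb{Z}$. Substituting in the definition of $G_{n_1 n_2}(d)$ produces the factor $\jacobi{n_1^{-1}}{d}=\jacobi{n_1}{d}$ (the symbol takes values in $\{\pm 1\}$) and turns $e(n_1 n_2 m/d)$ into $e(n_2 m'/d)$, so that $G_{n_1 n_2}(d) = \jacobi{n_1}{d}\,G_{n_2}(d)$, and dividing by $\varepsilon_d$ proves the claim.

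For (\ref{Gauss-basics}), I would exploit that $\jacobi{m}{p^\beta}$ reduces to the indicator of $(m,p)=1$ when $\beta$ is even and to $\jacobi{m}{p}$ when $\beta$ is odd, together with the decomposition $m=a+cp^{\beta-\alpha}$ ($a\pmod{p^{\beta-\alpha}}$, $c\pmod{p^\alpha}$) when $\alpha<\beta$, so that $e(p^\alpha m/p^\beta) = e(a/p^{\beta-\alpha})$. The three subcases are: (a) $\alpha\geq\beta$, where the exponential is trivial, producing $\phi(p^\beta)$ for $\beta$ even (with $\varepsilon_{p^\beta}=1$) and $p^{\beta-1}\sum_{r\pmod p}\jacobi{r}{p}=0$ for $\beta$ odd; (b) $\alpha=\beta-1$, where the exponential collapses to $e(m/p)$, giving $-p^{\beta-1}$ for $\beta$ even (via $\sum_{r=1}^{p-1} e(r/p)=-1$) and $p^{\beta-1} G_1(p) = p^{\beta-1}\varepsilon_p\sqrt{p}$ for $\beta$ odd, with $\varepsilon_{p^\beta}=\varepsilon_p$ then yielding $H_{p^{\beta-1}}(p^\beta)=p^{\beta-1/2}$; and (c) $\alpha<\beta-1$, where a further split $a=a_0+pb_0$ isolates either the Ramanujan sum $c_{p^{\beta-\alpha}}(1)=\mu(p^{\beta-\alpha})=0$ (for $\beta$ even) or the orthogonality sum $\sum_{b_0\pmod{p^{\beta-\alpha-1}}} e(b_0/p^{\beta-\alpha-1})=0$ (for $\beta$ odd), both of which force vanishing. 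The entire argument is elementary; the only delicate point is the consistent tracking of the $\varepsilon_d$-factors, which is precisely what motivates introducing $H_n$ in place of $G_n$, and no obstacle is anticipated beyond classical input (Gauss's evaluation $G_1(p)=\varepsilon_p\sqrt{p}$ and quadratic reciprocity).
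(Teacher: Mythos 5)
Your argument is correct and follows exactly the route the paper indicates (Chinese Remainder Theorem plus quadratic reciprocity for \eqref{factor}, the substitution $m\mapsto n_1^{-1}m'$ for \eqref{anotherfactor}, and elementary case analysis for \eqref{Gauss-basics}); the paper's own proof is only a one-line sketch of the same. The only micro-step left implicit in \eqref{factor} is that after dividing $G_n(d_1d_2)=\varepsilon_{d_1}\varepsilon_{d_2}\varepsilon_{d_1d_2}^{-1}G_n(d_1)G_n(d_2)$ by $\varepsilon_{d_1d_2}$ one still needs $\varepsilon_{d_1d_2}^{2}=\varepsilon_{d_1}^{2}\varepsilon_{d_2}^{2}$ (i.e.\ the multiplicativity of $\jacobi{-1}{d}$) to land on $H_n(d_1)H_n(d_2)$.
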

\begin{proof} \eqref{factor} follows from the Chinese remainder
  theorem and quadratic reciprocity, \eqref{anotherfactor} from
  the fact that  if $(n_1,d)=1$ then $n_1m$ runs through a set of
representatives mod~$d$, and \eqref{Gauss-basics} from elementary considerations.
\end{proof}
We now compute
\begin{equation}\sum_{\substack{c=1\\(c,2)=1}}^\infty\frac{
  \chi(c)H_n(c)}{c^{ 2s}}\quad\textrm{ and }\quad \sum_{\substack{n=1\\(n,2)=1}}^\infty\frac{t_{ n}\chi(n)H_{ 
  n}(c)}{n^{s}}
\end{equation}
where $\chi$ is a character mod~$q$, where $q\vert 8$.  As we shall 
see later these sums occur naturally in the Fourier coefficients
of the weight $1/2$ Eisenstein series of $\Gamma_0(4)$, and in
Rankin-Selberg-type integrals formed from these Eisenstein series.

For $n$ odd and positive we denote
 $$\tilde
\chi_{n}(c)=\jacobi{c}{n}$$ which is a character mod~$n$. When $n$ is
squarefree its conductor is $n$.

For $c$ odd we denote $$\chi_n(c)=\jacobi{n}{c}$$ which for $n$
odd and squarefree has an extension to all $c$ which is a character of
conductor $\abs{n}$ if $n\equiv 1 (4)$ and $4\abs{n}$ if $n\equiv 3 (4)$. See \cite[p.~147, 187-188]{Koblitz:1984b}.

By quadratic reciprocity \eqref{quad-red}
we have for odd positive $m,n$
\begin{equation}\label{quadrep-characters}\chi_n(m)=\tilde\chi_n(m)\begin{cases}1,& \textrm{ if }n\equiv
  1(4),\\\chi_4(m),& \textrm{ if }n\equiv 3(4),\end{cases}\end{equation}
where $\chi_4$ is the primitive character mod~$4$. We can write any non-zero integer $n$ uniquely as $n=n_0n_1^2$, where $n_0$ is
squarefree and $n_1>0$.  We define correction polynomials as
\begin{equation}\label{correction-polynomials-definition}
q(s,n,\chi)=\prod_{2\neq p\vert n_1}\sum_{\beta=0}^{v_p(n_1)}\frac{(1-\delta_{\beta<v_p(n_1)} \chi_{n_0}{(p)}\chi(p)p^{-s})}{p^{2\beta
  (s-1/2)}},
\end{equation}
where $v_p$ is the $p$-adic valuation.
 When
$\chi=1$ we sometimes write $q(s,n)=q(s,n,\chi)$.

We define \begin{equation}\label{L*}L^*(s,n,\chi)=q(s,n,\chi)L_2(s,\chi_{n_0}\chi).\end{equation}
\begin{lemma}\label{goodsplitting} 
We have 
$$\sum_{\substack{c=1\\(c,2)=1}}^\infty\frac{\chi(c)H_n(c)}{c^{2s}}=\frac{L^*(2s-1/2,n,\chi)}{\zeta_2(4s-1)}.$$
\end{lemma}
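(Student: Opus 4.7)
The plan is to establish the identity via matching Euler factors. Since $c \mapsto \chi(c)H_n(c)$ is multiplicative on odd $c$ (Proposition~\ref{bigger-prop}\eqref{factor} together with complete multiplicativity of $\chi$), and convergence in some right half-plane follows from the trivial bound $|H_n(c)| \leq c$, I can write
\begin{equation*}
  \sum_{(c,2)=1}\frac{\chi(c)H_n(c)}{c^{2s}} = \prod_{p\neq 2}\sum_{\beta\geq 0}\frac{\chi(p)^\beta H_n(p^\beta)}{p^{2s\beta}}.
\end{equation*}
For each odd prime $p$, write $n = p^\alpha m$ with $(m,p)=1$. Proposition~\ref{bigger-prop}\eqref{anotherfactor} then gives $H_n(p^\beta) = \jacobi{m}{p}^\beta H_{p^\alpha}(p^\beta)$, and by \eqref{Gauss-basics} the inner sum is finite since $H_{p^\alpha}(p^\beta) = 0$ once $\beta \geq \alpha + 2$. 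On the right-hand side, the corresponding local factor at $p$ is
\begin{equation*}
  q^{(p)}(2s-1/2,n,\chi) \cdot L_2^{(p)}(2s-1/2,\chi_{n_0}\chi) / \zeta_2^{(p)}(4s-1),
\end{equation*}
and the claim reduces to verifying equality of the two local factors.

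To handle this, I would parametrize by $\delta = v_p(n_0)\in\{0,1\}$ and $v = v_p(n_1)$, so $\alpha = \delta + 2v$, and set $Y = p^{2-4s}$, $W = \chi_{n_0}(p)\chi(p)p^{-(2s-1/2)}$. The key arithmetic fact that makes everything work cleanly is that $\chi$ is real (all characters mod~$8$ are) and $p$ is odd, so $\chi(p)^2 = 1$; combined with $\chi_{n_0}(p)^2 \in \{0,1\}$, this yields $W^2 = Y/p$ whenever $\chi_{n_0}(p) \neq 0$, whence the telescoping identity $(1 - Y/p)/(1-W) = 1 + W$. When $v = 0$, the correction polynomial $q^{(p)}$ is empty and a direct application of \eqref{Gauss-basics} gives local factor $1 + W$ for $\delta = 0$ and $1 - Y/p$ for $\delta = 1$, each matching the right-hand side after the above identity.

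The substantial subcases are $v \geq 1$. Here \eqref{Gauss-basics} contributes $\phi(p^{2k})$ for $\beta = 2k$ with $0 \leq k \leq v$ and a boundary term at $\beta = \alpha + 1$ equal to $\pm p^{\alpha + 1/2}$. Summing the resulting geometric series, one gets
\begin{equation*}
  L_p = 1 + (1 - 1/p)\sum_{k=1}^{v} Y^k + W Y^v \quad (\delta = 0), \qquad L_p = 1 + (1 - 1/p)\sum_{k=1}^{v} Y^k - Y^{v+1}/p \quad (\delta = 1).
\end{equation*}
On the other side, $q^{(p)}(2s-1/2,n,\chi) = \sum_{\beta=0}^{v} Y^\beta - W\sum_{\beta=0}^{v-1}Y^\beta$ (with the second sum absent when $\delta = 1$ since $\chi_{n_0}(p) = 0$). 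Multiplying by $(1-Y/p)/(1-W)$ or $(1-Y/p)$ respectively, and using $W^2 = Y/p$ in the first case to cancel a factor, one checks by a short algebraic manipulation that the two expressions for $L_p$ agree.

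The main obstacle is the bookkeeping in the $\delta = 0$, $v \geq 1$ case, where one must reconcile a sum of Euler totient contributions with the single odd-$\beta$ boundary term and match this against the product of a geometric polynomial and a rational ratio. No single step is hard, but the identity $W^2 = Y/p$ (which requires $\chi$ real and $p \nmid n_0$) is the crucial ingredient that makes the simplification close up. All other cases are either easier (the $v = 0$ cases amount to a single line) or structurally identical (the $\delta = 1$ cases, where $W = 0$ simplifies the algebra). Assembling the matched local factors over all odd primes yields the claimed global identity.
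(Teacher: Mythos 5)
Your proposal is correct and follows essentially the same route as the paper: factor the Dirichlet series into an Euler product via Proposition~\ref{bigger-prop}\eqref{factor}, reduce each local factor with \eqref{anotherfactor} and \eqref{Gauss-basics}, split into the cases $\alpha=v_p(n)$ even or odd, and match against $q^{(p)}L^{(p)}/\zeta^{(p)}$. The only difference is presentational: where the paper telescopes the partial sums ``by induction,'' you organize the same verification through the substitution $Y=p^{2-4s}$, $W=\chi_{n_0}(p)\chi(p)p^{-(2s-1/2)}$ and the identity $W^2=Y/p$, and I checked that your displayed local factors on both sides are correct.
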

\begin{proof}
Using multiplicativity of $H_n(d)$ (Proposition \ref{bigger-prop}) we
see that the sum factors into local factors.
For a prime $p\neq 2$ we compute the corresponding
factor  
$$R_p(s)=\sum_{\beta=0}^\infty\frac{
  \chi(p^\beta)H_n(p^\beta)}{p^{\beta 2s}}.$$
Write $n=n'p^\alpha$ where $(n',p)=1$. Then using Proposition~\ref{bigger-prop} 
\eqref{anotherfactor}, \eqref{Gauss-basics} we have 
\begin{align} 
R_p(s) &= \sum_{\beta=0}^\infty \frac{\jacobi{n'}{p^{\beta}} \chi(p^\beta) 
H_{p^{\alpha}}(p^\beta)}{p^{\beta2s}}
\label{general-computation}= 
\sum_{\substack{\beta=0\\\beta\equiv 0 (2)}}^{\alpha}\frac{\phi(p^\beta)}{p^{\beta2s}} 
+ \frac{\jacobi{n'}{p^{{\alpha+1}}}\chi(p^{\alpha+1}) H_{p^{\alpha}}(p^{\alpha+1})}{p^{(\alpha+1)2s}}.
\intertext{Consider first $\alpha$ even, in which case $\alpha=2v_p(n_1)$. Then we find}
\nonumber 
R_p(s) &= 1+\sum_{\substack{\beta=1\\\beta\equiv 0 (2)}}^{\alpha}\frac{p^{\beta-1}(p-1)}{p^{\beta2s}} 
+ \frac{\chi_{n_0}(p)\chi(p)p^{\alpha+1/2}}{p^{(\alpha+1)2s}}, \\ 
\intertext{noting that $\chi_{n_0}(p)= \jacobi{n'}{p}$. By induction we find}
\nonumber 
R_p(s) &= \frac{L^{(p)}(2s-1/2,\chi_{n_0}\chi)}{\zeta^{(p)}(4s-1)} 
\left(\sum_{\substack{\beta=0\\\beta\equiv 0 (2)}}^{\alpha} p^{\beta(1-2s)} 
- \sum_{\substack{\beta=0\\\beta\equiv 0 (2)}}^{\alpha-2} {\chi_{n_0}(p) \chi(p) 
p^{-(2s-1/2)} {p^{\beta(1-2s)}}}\right).
\end{align}
Here we have used $\chi_{n_0}^2(p)=1$.

Returning now to (\ref{general-computation}) we assume  $\alpha$
is odd, in which case $\alpha-1=2v_p(n_1)$. We find that in this case
\begin{align} \nonumber 
R_p(s) &=1+\sum_{\substack{\beta=1\\\beta\equiv 0 (2)}}^{\alpha} 
\frac{p^{\beta-1}(p-1)}{p^{\beta2s}}+\frac{-p^{\alpha}}{p^{(\alpha+1)2s}} \\
\nonumber 
&=(1-p^{-(4s-1)})\sum_{\substack{\beta=0\\\beta\equiv 0 (2)}}^{\alpha-1}p^{\beta(1-2s)},
\intertext{where again we have used induction. Using
that for $\alpha$ odd $\chi_{n_0}(p)=0$ we may write this as }  
\nonumber 
R_p(s) &=\frac{L^{(p)}(2s-1/2,\chi_{n_0}\chi)}{\zeta^{(p)}(4s-1)} 
\sum_{\substack{\beta=0\\\beta\equiv 0 (2)}}^{\alpha-1}p^{\beta(1-2s)}.
\end{align} 
Since $\chi_{n_0}{(p)}=0$ we arrive at the desired result.
\end{proof}
\begin{proposition}\label{correction-polynomials} 
The function $q(s,n,\chi)$ has the following properties:
\begin{enumerate}
\item \label{prop1} If $n$ is squarefree, then $q(s,n,\chi)=1$.
\item \label{prop2} If $n=n_0n_1^2$ with $n_0$
  squarefree and $n_0,n_1$ odd, then 
$$q(s,n,\chi)=(n_1^2)^{1/2-s}q(1-s,n,\chi). $$
\item \label{prop3} If $\Re(s)\geq1/2$, then $q(s,n,\chi)=O(n^\epsilon)$
uniformly in $\Re(s)$.
\end{enumerate}
\end{proposition}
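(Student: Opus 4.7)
My plan is to prove the three parts in order, each being more or less a direct consequence of the local structure of the definition.

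For \eqref{prop1}, if $n$ is squarefree then $n_1=1$, hence the product in \eqref{correction-polynomials-definition} is empty and equals $1$.

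For \eqref{prop2}, since the product over $p\mid n_1$ factors and $n_1^2 = \prod_p p^{2v_p(n_1)}$, it suffices to prove the local identity $q_p(s) = p^{2v_p(n_1)(1/2-s)} q_p(1-s)$ at each prime $p\mid n_1$. Writing $v=v_p(n_1)$ and $a=\chi_{n_0}(p)\chi(p)$, the local factor is
\begin{equation*}
q_p(s) = (1-ap^{-s})\sum_{\beta=0}^{v-1} p^{-2\beta(s-1/2)} + p^{-2v(s-1/2)}.
\end{equation*}
I would then substitute $s \mapsto 1-s$ in this expression and reindex by $\gamma = v-\beta$ in $p^{2v(1/2-s)}q_p(1-s)$. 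After this bookkeeping the constant terms and the $p^{-2\beta(s-1/2)}$ terms match trivially, and the two remaining pieces involving $a$ match because $p^{s-1}\cdot p^{-2(s-1/2)} = p^{-s}$, so that $p^{s-1}\sum_{\gamma=1}^{v} p^{-2\gamma(s-1/2)} = p^{-s}\sum_{\beta=0}^{v-1}p^{-2\beta(s-1/2)}$. Note that $a^2=\chi_{n_0}(p)^2\chi(p)^2$ never enters, so no hypothesis on $\chi$ beyond being defined at $p$ is needed.

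For \eqref{prop3}, I would just majorize term by term. For $\Re(s)\geq 1/2$ and $p$ odd, $|p^{-2\beta(s-1/2)}|\leq 1$ and $|1-\chi_{n_0}(p)\chi(p)p^{-s}|\leq 2$, giving
\begin{equation*}
|q_p(s)| \leq 2(v_p(n_1)+1)
\end{equation*}
uniformly in $\Re(s)\geq 1/2$. Taking the product over $p\mid n_1$ yields $|q(s,n,\chi)| \leq 2^{\omega(n_1)}\prod_{p\mid n_1}(v_p(n_1)+1) \leq 2^{\omega(n_1)}d(n_1) = O(n_1^\varepsilon) = O(n^\varepsilon)$ by the standard divisor bound, since $n_1\leq n^{1/2}$.

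There is no real obstacle: the only delicate step is the change of summation index in \eqref{prop2}, which is purely mechanical once one separates the $\beta=v$ term from the rest of the sum in $q_p(s)$.
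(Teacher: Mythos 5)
Your proof is correct and follows essentially the same route as the paper, which simply verifies all three parts directly from the local definition: the empty product for \eqref{prop1}, a prime-by-prime check of the functional equation for \eqref{prop2} (your reindexing $\gamma=v-\beta$ and the identity $p^{s-1}p^{-2(s-1/2)}=p^{-s}$ is exactly the required bookkeeping), and the termwise majorization $\abs{q(s,n,\chi)}\leq 2^{\#\{p\mid n\}}\tau(n)=O(n^{\varepsilon})$ for \eqref{prop3}. No gaps.
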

\begin{proof}
  These statements are all straightforward to verify from the
  definition. (\ref{prop1}) is clear and (\ref{prop2}) is easily
  verified by considering factors. Trivial estimates for \mbox{$\Re(s) 
  \geq 1/2$} lead to  $\abs{q(s,n,\chi)} \leq 2^{\#\{p\vert n\}} \tau(n)$ 
  which gives (\ref{prop3}).
\end{proof}

Write $c=c_0c_1^2$ with $c_0$ squarefree and set $v=v_p(c_1)$. We then
define, for odd $c$
\begin{equation}
  Q_\psi(s,c,\chi) = 
  \prod_{p \vert c_1} \frac{t_{p^{2v}}-t_{p^{2v-1}} \tilde \chi_{c_0}(p) \chi(p) 
  \big(\frac{p^{1-s}+p^{s}}{p}\big)+{t_{p^{2v-2}}{\tilde\chi_{c_0}(p)^2}/{p}}}{p^{2v(s-1/2)}}.
\end{equation}
Since $\psi$ is fixed, we shall often omit it from the notation and
simply write $Q(s,c,\chi)$. We define 
\begin{equation} 
L^*(s,c,\psi,\chi):=Q_\psi(s,c,\chi)L_2(s,\psi\otimes\tilde\chi_ {c_0}\chi). 
\end{equation}

\begin{lemma} \label{intermediate-computed}
Let $c$ be an odd natural number. Then
$$  
\sum_{\substack{n=1\\(n,2)=1}}^\infty\frac{t_{ n}\chi(n)H_{ n}(c)}{n^{s}}
= \sqrt{c}L^*(s,c,\psi,\chi). 
$$
\end{lemma}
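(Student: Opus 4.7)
The plan is to establish the identity as an Euler product, analogous to the proof of Lemma~\ref{goodsplitting} but with the roles of $n$ and $c$ swapped. First I would write each odd positive $n$ uniquely as $n=n'n''$ with $(n',c)=1$ and every prime of $n''$ dividing $c$. Multiplicativity of $\{t_n\}$ and of $\chi$, together with Proposition~\ref{bigger-prop}\eqref{anotherfactor} in the form $H_{n'n''}(c)=\jacobi{n'}{c}H_{n''}(c)=\tilde\chi_c(n')H_{n''}(c)$, factors the Dirichlet series as $\Sigma_1(s)\Sigma_2(s)$, where $\Sigma_1$ runs over $(n',2c)=1$ and $\Sigma_2$ is supported on odd $n''$ whose primes all divide $c$.

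For $\Sigma_1$, writing $c=c_0c_1^2$, the identity $\tilde\chi_c(n')=\jacobi{n'}{c_0}\jacobi{n'}{c_1}^2=\tilde\chi_{c_0}(n')$ for $(n',c)=1$ gives
\[\Sigma_1(s)=L_2(s,\psi\otimes\tilde\chi_{c_0}\chi)\prod_{\substack{p\mid c\\p\neq 2}}(L^{(p)}(s,\psi\otimes\tilde\chi_{c_0}\chi))^{-1}.\]
For $\Sigma_2$, Proposition~\ref{bigger-prop}\eqref{factor} splits $H_{n''}(c)$ multiplicatively across $c=\prod p^{\beta_p}$, and Proposition~\ref{bigger-prop}\eqref{anotherfactor} then extracts the $p$-part of $n''$ in each factor. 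Writing $n''=\prod_{p\mid c}p^{\alpha_p}$, the resulting cross Jacobi-symbols $\prod_{p\neq q,\,p,q\mid c}\jacobi{q}{p}^{\alpha_q\beta_p}$ regroup as $\prod_p\jacobi{p}{c/p^{\beta_p}}^{\alpha_p}$, so $\Sigma_2$ factors as $\prod_{p\mid c,\,p\neq 2}S_p(s)$ with each $S_p$ depending only on the single prime $p$.

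Combining $\Sigma_1$ and $\Sigma_2$, the lemma reduces to the local identity
\[\frac{S_p(s)}{L^{(p)}(s,\psi\otimes\tilde\chi_{c_0}\chi)}=p^{\beta/2}\,Q_p(s)\]
for every odd prime $p\mid c$, where $\beta=v_p(c)$ and $Q_p(s)$ is the $p$-factor of $Q_\psi(s,c,\chi)$ (an empty product when $p\nmid c_1$). I would verify this by case analysis on the parity of $\beta$. If $\beta$ is odd, then $p\mid c_0$, so $\tilde\chi_{c_0}(p)=0$, $L^{(p)}=1$, and Proposition~\ref{bigger-prop}\eqref{Gauss-basics} leaves only the single term $\alpha=\beta-1$ in $S_p$, making the identity immediate. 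If $\beta$ is even, both $\alpha=\beta-1$ and the tail $\alpha\geq\beta$ contribute; replacing the tail by $L^{(p)}(s,\psi\otimes\tilde\chi_{c_0}\chi)$ minus its initial terms, multiplying through by $(L^{(p)}(s,\psi\otimes\tilde\chi_{c_0}\chi))^{-1}=1-(\tilde\chi_{c_0}\chi)(p)t_pp^{-s}+p^{-2s}$, and using the Hecke recursion $t_{p^{\lambda+1}}=t_pt_{p^\lambda}-t_{p^{\lambda-1}}$ collapses the answer to the required polynomial.

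The main obstacle is the even-$\beta$ local identity: one must combine the explicit Gauss-sum values from Proposition~\ref{bigger-prop}\eqref{Gauss-basics}, the geometric series for $L^{(p)}$, and the Hecke recursion so that the overwhelming majority of terms cancel and the remainder matches exactly the numerator $t_{p^{2v}}-t_{p^{2v-1}}(\tilde\chi_{c_0}\chi)(p)(p^{1-s}+p^s)/p+t_{p^{2v-2}}/p$ appearing in $Q_\psi$. This polynomial manipulation is routine but delicate, and is essentially the same kind of computation already carried out for the one-variable situation in Lemma~\ref{goodsplitting}.
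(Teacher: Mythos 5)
Your proposal is correct and follows essentially the same route as the paper: both factor the Dirichlet series into local Euler factors using the multiplicativity properties of $H_n(d)$ from Proposition~\ref{bigger-prop}, and then verify the local identity by cases on the parity of $v_p(c)$ using the explicit Gauss-sum values, the geometric series for $L^{(p)}$, and the Hecke recursion. The only cosmetic differences are that you organize the factorization as a split $n=n'n''$ (coprime-to-$c$ part times $c$-supported part) and clear denominators by multiplying with $(L^{(p)})^{-1}$, whereas the paper writes a single Euler product over all odd primes and evaluates the tail directly via Satake parameters; the even-$v_p(c)$ computation you defer is precisely the one the paper carries out following \eqref{sunshine}.
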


\begin{proof} A similar computation can e.g.\ be found in 
\cite[Sec.~3]{BrubakerBucurChintaFrechetteHoffstein:2004}. 
We first show that the Dirichlet series factors into local 
factors. For $p$ an odd prime write  $c=c'p^l$ with $(c',p)=1$, 
and $m=p^{v_p{(m)}}\frac{m}{p^{v_p(m)}}$. Then using 
Propositions~\ref{bigger-prop} \eqref{factor} and
\eqref{anotherfactor} we find
\begin{align}
\label{factoringG} 
\nonumber
H_m(c) = 
\jacobi{m/p^{v_p(m)}}{p^l}\jacobi{p^{v_p(m)}}{c'}H_{p^{v_p(m)}}(p^l)H_{m/p^{v_p(m)}}(c'). 
\end{align}
Writing $m=np^\lambda$, we can write the Dirichlet series as
\begin{align*}
\sum_{\substack{n=1\\(n,2p)=1}}^\infty & \sum_{\lambda=0}^\infty 
\frac{t_{np^\lambda}\chi({np^\lambda})}{(np^\lambda)^{s}} 
H_{np^\lambda}(c'p^l)\\
&=\sum_{\substack{n=1\\(n,2p)=1}}^\infty 
\frac{t_{n}\chi(n)H_{n}(c')\jacobi{n}{p^l}}{n^{s}}\left(
\sum_{\lambda=0}^\infty \frac{t_{p^\lambda}\chi(p^\l)}{p^{\lambda s}} 
H_{p^\lambda}(p^l)\jacobi{p^\lambda}{c'}\right). 
\end{align*} 
Repeating this argument for every prime $p$ it follows that
the series factors as \begin{equation} 
  \label{factored-expression}
\prod_{p\neq 2}\left(\sum_{\lambda=0}^\infty
\frac{t_{p^\lambda}}{p^{\lambda s}}H_{p^\lambda}(p^{v_p{(c)}})\jacobi{p^\lambda}{c/p^{v_p{(c)}}}\chi(p^\l)\right).
\end{equation}
We now compute the local factors of
(\ref{factored-expression}) i.e.\ we compute, for $p\neq 2$
\begin{equation}\label{good-primes-again}
\sum_{\lambda=0}^\infty
\frac{t_{p^\lambda}\chi(p^\l)}{p^{\lambda s}}H_{p^\lambda}(p^l)\jacobi{p^\lambda}{c'},\end{equation}
where $l=v_p(c)$ and $c'=c/p^{v_p(c)}$. If $l=0$ the sum reduces to 
\begin{align*}
\sum_{\lambda=0}^\infty
\frac{t_{p^\lambda}\chi(p^\l)}{p^{\lambda s}}\jacobi{p^\lambda}{c}
&=L^{(p)}(s,\psi\otimes
\tilde
\chi_ {c_0}\chi),\end{align*}
where we have used that $\tilde \chi_ {c}(p)=\tilde \chi _{c_0}(p)$ if 
$(p,c)=1$. Here $c_0$ denotes the squarefree part of $c$.

If $l>0$ is even we use Proposition~\ref{bigger-prop}~\eqref{Gauss-basics}  
to see that in this case \eqref{good-primes-again} is equal to
\begin{equation}\label{sunshine}
\left(-\frac{t_{p^{l-1}}p^{l-1}\chi(p^{l-1})}
{p^{(l-1) s}}\jacobi{p^{l-1}}{c'}+
\sum_{\l=l}^\infty\frac{t_{p^\l}p^{l-1}(p-1)\chi(p^\l)}
{p^{\l s}}\jacobi{p^\lambda}{c'}\right).
\end{equation}
For $ t_n$ being a Hecke eigenvalue we can use the Satake parameters 
and evaluate the resulting geometric sums to see that 
\begin{align}
\sum_{\l=l}^\infty & \frac{t_{p^\l}\chi(p^\l)}{p^{\l s}} \jacobi{p^\lambda}{c'} \\  
\nonumber
&=\frac{1}{\alpha_p-\beta_p} \sum_{\l=l}^\infty \frac{\alpha_p^{\lambda+1}-
\beta_p^{\lambda+1}}{p^{\l s}} \chi(p^\l) \jacobi{p^\lambda}{c'} \\  
\nonumber 
&=\frac{1}{\alpha_p-\beta_p} \left(\frac{\alpha_p^{l+1}}{p^{l s}} 
(1-\alpha_p\jacobi{p}{c'} \chi(p)p^{- s})^{-1}-\frac{\beta_p^{l+1}}{p^{l s}} 
(1-\beta_p\jacobi{p}{c'}\chi(p)p^{- s})^{-1}\right),\\  
\intertext{where we have used $(\jacobi{p}{c'}\chi(p))^l=1$. Now the sum becomes}
\nonumber 
&=\frac{L^{(p)}(s,\psi\otimes \tilde \chi_ {c_0}\chi)}{p^{l s}}\\
\nonumber&\qquad\cdot\frac{1}{\alpha_p-\beta_p}\left(\alpha_p^{l+1} 
(1-\beta_p\jacobi{p}{c'}\chi(p)p^{- s})-\beta_p^{l+1}(1-\alpha_p 
\jacobi{p}{c'}\chi(p)p^{- s})\right)\\  
\nonumber 
&=\frac{L^{(p)}(s,\psi\otimes \tilde \chi_ {c_0}\chi)}{p^{l s}}
(t_{p^l}-t_{p^{l-1}} \jacobi{p}{c'}\chi(p)p^{- s}).
\end{align}
This is also true when $t_n=\tau(n)$ from a similar computation, which
we omit.

It follows that  \eqref{sunshine} can be written as
\begin{align*}
p^{l-1}&[\frac{-t_{p^{l-1}}}{p^{(l-1) s}}\tilde\chi_{c_0}(p^{l-1})\chi(p^{l-1})+\frac{L^{(p)}(s,\psi\otimes
\tilde\chi_ {c_0}\chi
)}{p^{l s}}
(p-1)(t_{p^l}-t_{p^{l-1}}\jacobi{p}{c'}\chi(p)p^{- s})]\\
&=p^{l-1}\frac{L^{(p)}(s,\psi\otimes
\tilde
\chi_ {c_0}\chi
)}{p^{l s}}[\frac{-t_{p^{l-1}}}{p^{- s}}\tilde\chi_{c_0}(p^{l-1})\chi(p^{l-1})(1-t_p\tilde\chi_{c_0}(p)\chi(p)p^{- s}+p^{-2 s})
\\
&\qquad \quad
+(p-1)(t_{p^l}-t_{p^{l-1}}\jacobi{p}{c'}\chi(p)p^{- s})].\\
\intertext{We use that the Hecke-eigenvalues satisfy
$t_{p^{l-1}}t_p=t_{p^{l}}+t_{p^{l-2}}$ to get} 
&=p^{l/2}\frac{L^{(p)}(s,\psi\otimes
\tilde
\chi_ {c_0}\chi
)}{p^{l(s-1/2)+1}}[pt_{p^l}-t_{p^{l-1}}\tilde\chi_{c_0}(p)\chi(p)(p^{1-s}+p^{s})+t_{p^{l-2}}].
\end{align*}
If instead $l>0$ is odd we can again use  Proposition
\ref{bigger-prop} \eqref{Gauss-basics}
and we find that in this case \eqref{good-primes-again} is equal to
\begin{equation*}
\frac{t_{p^{l-1}}}{p^{(l-1)s}}p^{l-1/2}\jacobi{p^{l-1}}{c'}\chi(p^{l-1})=\frac{t_{p^{l-1}}}{p^{(l-1)(s-1)-1/2}}.
\end{equation*}
We note also that $\tilde\chi_{c_0}(p)=\jacobi{p}{c_0}=0$ since by $l$
being odd we may conclude that $c_0$ is divisible by $p$. It follows
that in this case $L^{(p)}(s,\psi\otimes
\tilde
\chi_ {c_0}\chi
)=1$, and we conclude that \eqref{good-primes-again} can be written as
$$\frac{p^{l/2}t_{p^{l-1}}}{p^{(l-1)(s-1/2)}} L^{(p)}(s,\psi\otimes
\tilde
\chi_ {c_0}\chi
),$$ which gives the desired result in this case.
\end{proof}
\begin{proposition}\label{More-correction-polynomials}
The function $Q(s,c,\chi)$ has the following properties:
\begin{enumerate}
\item \label{prop21} If $c$ is squarefree, then $Q(s,c,\chi)=1$.
\item \label{prop22} If $c=c_0c_1^2$ with $c_0$ squarefree and $c_0, c_1$ odd, then 
$$(c_1^2)^{1-2s}Q(1-s,c,\chi)=Q(s,c,\chi). $$
\end{enumerate}
\end{proposition}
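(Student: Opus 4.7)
The plan is to verify both claims on a prime-by-prime basis, since $Q(s,c,\chi)$ is defined as a finite product over primes $p \mid c_1$.

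For part \eqref{prop21}, note that if $c$ is squarefree then $c_1 = 1$, so no prime divides $c_1$, the defining product is empty, and $Q(s,c,\chi) = 1$ by convention. This requires no further work.

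For part \eqref{prop22}, the key observation is that in each local factor, the numerator
\begin{equation*}
N_p(s) := t_{p^{2v}} - t_{p^{2v-1}}\,\tilde{\chi}_{c_0}(p)\chi(p)\Bigl(\frac{p^{1-s}+p^{s}}{p}\Bigr) + \frac{t_{p^{2v-2}}\,\tilde{\chi}_{c_0}(p)^2}{p}
\end{equation*}
depends on $s$ only through the combination $p^{1-s} + p^{s}$, which is manifestly symmetric under $s \mapsto 1-s$. Hence $N_p(1-s) = N_p(s)$. The denominator $p^{2v(s-1/2)}$ transforms as $p^{2v(1/2-s)} = p^{-2v(s-1/2)}$, so each local factor satisfies
\begin{equation*}
Q_p(1-s) = N_p(s)\,p^{2v(s-1/2)} = p^{4v(s-1/2)}\, Q_p(s) = p^{2v(2s-1)}\, Q_p(s),
\end{equation*}
where $v = v_p(c_1)$. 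Multiplying over all $p \mid c_1$ gives
\begin{equation*}
Q(1-s,c,\chi) = \Bigl(\prod_{p\mid c_1} p^{2v_p(c_1)}\Bigr)^{2s-1} Q(s,c,\chi) = (c_1^2)^{2s-1}\, Q(s,c,\chi),
\end{equation*}
which is precisely the claimed identity $(c_1^2)^{1-2s} Q(1-s,c,\chi) = Q(s,c,\chi)$.

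There is no real obstacle here: the argument is purely symbolic. The only point requiring a moment's thought is to recognize that the $s$-dependence of $N_p(s)$ sits entirely inside the symmetric Laurent polynomial $p^{1-s}+p^{s}$, after which the functional equation reduces to an elementary exponent count over the prime factorization of $c_1$.
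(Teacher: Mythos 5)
Your proof is correct and takes the same route the paper intends: the paper's proof simply states that (1) is clear and (2) is "easily verified by considering factors," which is precisely the local, prime-by-prime verification you carry out in detail (the $s$-dependence of each numerator sitting inside the symmetric expression $p^{1-s}+p^{s}$ and the exponent count on the denominator). Nothing further is needed.
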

\begin{proof}
Statement (\ref{prop21}) is clear and  (\ref{prop22}) is easily
  verified by considering factors. 
\end{proof}
We would like to have bounds analogous to Proposition~\ref{correction-polynomials}~\eqref{prop3}. 
Any bound of the form  $\abs{t_{p^{l}}}\leq \tau({p^{l}})p^{\theta l}$ implies that,
 when $\Re(s)\geq 1/2$, 
\begin{equation}\label{standard-rpc}\abs{Q(s,c,\chi)}\leq \tau(c)4^{\#\{p\vert
  c\}}c^\theta=O(c^{\theta+\e}).\end{equation}
  The Ramanujan-Petersson conjecture will give the strongest bound with $\theta=0$.
Since the Ramanujan-Petersson conjecture is true on average
\eqref{RPC-average} we can prove that $Q(s,c,\chi)$ is bounded on
average:
\begin{lemma}\label{Q-on-square-average}For $\Re(s)\geq 1/2$ we have  
  \begin{equation*}
    \sum_{\substack{c\leq X, \\ c \, \textup{odd}}} 
    \abs{Q(s,c,\chi)}^2=O(X^{1+\varepsilon}),
  \end{equation*}
uniformly in $s$.
\end{lemma}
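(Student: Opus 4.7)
The plan is a two-step argument: first obtain a pointwise upper bound for $|Q(s,c,\chi)|^2$ that is uniform in $s$ on $\Re(s)\ge 1/2$, then sum this bound over $c\le X$ by a multiplicativity-and-interchange argument controlled by the Rankin--Selberg average \eqref{RPC-average}.

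For the pointwise bound, I would substitute $u_p := p^{s-1/2}$ into the explicit formula for the $p$-th local factor of $Q(s,c,\chi)$. Writing $p^s+p^{1-s}=p^{1/2}(u_p+u_p^{-1})$ and $p^{2v(s-1/2)}=u_p^{2v}$ shows that this local factor, with $v=v_p(c_1)$, is a Laurent polynomial in $u_p$ supported only on the powers $u_p^{-(2v+1)}$, $u_p^{-2v}$, $u_p^{-(2v-1)}$, with coefficients of modulus at most $|t_{p^{2v-2}}|/p$, $|t_{p^{2v}}|$, and $|t_{p^{2v-1}}|p^{-1/2}$ respectively (since $|\tilde\chi_{c_0}(p)\chi(p)|\le 1$). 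The hypothesis $\Re(s)\ge 1/2$ gives $|u_p|\ge 1$, hence $|u_p^{-k}|\le 1$ for $k\ge 0$, so the triangle inequality yields
\[
|Q(s,c,\chi)|^2 \le F(c_1) := \prod_{p\mid c_1} A_{p,v_p(c_1)}^2,\qquad A_{p,v} := |t_{p^{2v}}|+2p^{-1/2}|t_{p^{2v-1}}|+p^{-1}|t_{p^{2v-2}}|,
\]
uniformly in $s$.

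For the averaging step, I would use $(a+b+c)^2\le 3(a^2+b^2+c^2)$ to get $A_{p,v}^2 \ll |t_{p^{2v}}|^2+p^{-1}|t_{p^{2v-1}}|^2+p^{-2}|t_{p^{2v-2}}|^2$, expand the product over $p\mid c_1$ of these three-term sums, and use multiplicativity of the Hecke coefficients ($\prod_p|t_{p^{k_p}}|^2=|t_{\prod p^{k_p}}|^2$) to obtain
\[
F(c_1)\ll c_1^\varepsilon \sum_{d\mid \mathrm{rad}(c_1)^2}\frac{|t_{c_1^2/d}|^2}{d},
\]
where the $c_1^\varepsilon$ absorbs $12^{\omega(c_1)}$ via the divisor bound. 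Writing $c=c_0c_1^2$ with $c_0$ odd squarefree, bounding the number of such $c_0\le X/c_1^2$ by $O(X/c_1^2)$, enlarging the set of admissible $d$ to all of $d\mid c_1^2$, and making the substitution $e=c_1^2/d$, reduces matters to
\[
\sum_{\substack{c\le X\\c\text{ odd}}}|Q(s,c,\chi)|^2 \ll X^{1+\varepsilon}\sum_{c_1\le \sqrt X}\frac{1}{c_1^4}\sum_{e\mid c_1^2}e\,|t_e|^2.
\]
Interchanging the order of summation, the condition $e\mid c_1^2$ becomes $e^*\mid c_1$ for $e^* := \prod_p p^{\lceil v_p(e)/2\rceil}$, so $\sum_{c_1\le \sqrt X,\ e^*\mid c_1}c_1^{-4}\ll (e^*)^{-4}$. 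Combined with the elementary bound $e\le (e^*)^2$, the inner sum is dominated by $\sum_{e\le X}|t_e|^2/e$, which is $O(\log X)$ by partial summation from \eqref{RPC-average}. This gives the desired $O(X^{1+\varepsilon})$, uniformly in $s$.

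The step I expect to be most delicate is the combinatorial reshuffling that converts the local multiplicative quantity $F(c_1)$ into a global sum accessible to the Rankin--Selberg average: one must choose the expansion in a way that the product of $|t_{p^{k_p}}|^2$ reassembles as a single $|t_e|^2$, and then the square-divisibility condition $e\mid c_1^2$ must be exploited carefully (via $e\le (e^*)^2$) so that the interchange costs at most a logarithmic factor, which is exactly what \eqref{RPC-average} supplies.
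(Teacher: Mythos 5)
Your proof is correct and follows essentially the same route as the paper: a pointwise bound $\abs{Q(s,c,\chi)}\leq\prod_{p\mid c_1}\bigl(\abs{t_{p^{2v}}}+2\abs{t_{p^{2v-1}}}+\abs{t_{p^{2v-2}}}\bigr)$, uniform on $\Re(s)\geq 1/2$ because $\abs{p^{-k(s-1/2)}}\leq 1$, followed by a reduction to $\sum_{d\leq X}\abs{t_d}^2/d=O(\log X)$ via \eqref{RPC-average}. The paper's bookkeeping in the second step is slightly slicker --- it bounds each local factor by $4\max_{i=0,1,2}\abs{t_{p^{2v-i}}}$ so that $\abs{Q(s,c,\chi)}^2\leq 16^{\#\{p\mid c\}}\sum_{d\mid c}\abs{t_d}^2$ and then simply swaps the order of the sums over $c\leq X$ and $d\mid c$ --- but this is the same idea as your expansion over $c_1$ and the interchange using $e\mid c_1^2$.
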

\begin{proof}
Write $c=c_0c_1^2$ with $c_0$ squarefree and $ c $ odd. It is easy to 
see that 
\begin{align*}\abs{Q(s,c,\chi)}&\leq \prod_{p\vert
    c_1}\abs{t_{p^{2v_p(c_1)}}}+2\abs{t_{p^{2v_p(c_1)-1}}}+\abs{t_{p^{2v_p(c_1)-2}}}\\
&\leq  \prod_{p\vert
    c_1}4\max_{i=0,1,2}\abs{t_{p^{2v_p(c_1)-i}}}\\
&=4^{\#\{p\vert c_1\}}\abs{t_{d_0}}, \quad\quad \textrm{where $d_0$ is some
  divisor of $c_1^2$.}
\end{align*}
It follows that $$\abs{Q(s,c,\chi)}^2\leq 16^{\#\{p\vert
  c_1\}}\abs{t_{d_0}}^2\leq 16^{\#\{p\vert
  c\}}\sum_{d\vert c}\abs{t_{d}}^2.$$
Using the Ramanujan-Petersson conjecture on average \eqref{RPC-average}
and $16^{\#\{p\vert
  c\}}=O(c^\varepsilon)$
we find
\begin{align*}
   \sum_{c\leq X}\abs{Q(s,c,\chi)}^2&=
   O(X^\varepsilon\sum_{c\leq X}\sum_{d\vert
     c}\abs{t_{d}}^2)\\
&=O(X^\varepsilon\sum_{d\leq X}\abs{t_d}^2\#\{c\leq X\vert
\quad d\textrm{ divides }c\})\\
&=O(X^{1+\varepsilon}\sum_{d\leq X}\frac{\abs{t_d}^2}{d})=O(X^{1+\varepsilon}).
\end{align*}
\end{proof}
We are now ready to define the double Dirichlet series. 
Let $\chi_4$ be the
primitive character mod~$4$, i.e.\ $\chi_4(n)=\jacobi{-1}{n}=(-1)^{(n-1)/2}$ for
$(n,2)=1$, and let
$\chi_8$ be the primitive character mod~$8$ given by
$\chi_8(n)=\jacobi{2}{n}=(-1)^{\frac{1}{8}(n-1)(n+1)}$ for $(n,2)=1$.  Let $\chi$,
$\chi'$ be characters mod~$8$, i.e $\chi$,
$\chi'$ are induced from 1, $\chi_4$, $\chi_8$, or $\chi_4\chi_8$. 
We then define 
\begin{equation}\label{therightseries}
  Z(s,w,\chi,\chi')=\zeta_2(4s-1)\sum_{\substack{n=1\\(n,2)=1}}^\infty\frac{\chi(n)t_{ n}L^*(2w-1/2,n,\chi')}{{n}^{s-w+1/2}}  .
\end{equation}
It is easy to see -- using Proposition~\ref{correction-polynomials}
\eqref{prop3}  and (\ref{L*}) -- that  for $\Re(2w-1/2)$,
$\Re(s-w+1/2)$ large enough the series is absolutely and
locally uniformly convergent. 

By Lemma \ref{goodsplitting} we see that 
$$Z(s,w,\chi,\chi')=\zeta_2(4s-1)\zeta_2(4w-1)\sum_{\substack{n=1\\(n,2)=1}}^\infty
    \frac{t_{n}\chi(n)}{n^{s-w+1/2}}\sum_{\substack{c=1\\(c,2)=1}}^\infty\frac{\chi'(c)H_n(c)}{c^{2w}}.$$
Interchanging summations and using Lemma \ref{intermediate-computed}
we see that this equals
\begin{equation}\label{full-intermediate-expression}
Z(s,w,\chi,\chi')=\zeta_2(4s-1)\zeta_2(4w-1)\sum_{\substack{c=1\\(c,2)=1}}^\infty\frac{\chi'(c)L^*(s-w+1/2,c,\psi,\chi)}{c^{2w-1/2}}.
\end{equation}
 Note
that, since $$\zeta_2(4s-1)\zeta_2(4w-1)=\sum_{\substack{n=1\\(n,2)=1}}^\infty\frac{\sigma_{2-4(s-w+1/2)}(n)}{n^{2(2w-1/2)}},$$
 we also have the series representation 
\begin{equation}\label{another-nice-representation}
 Z(s,w,\chi,\chi')=\sum_{\substack{c=1\\(c,2)=1}}^\infty\frac{\chi'(c)L^{**}(s-w+1/2,\psi,c,\chi)}{c^{2w-1/2}},
\end{equation}
where
$$L^{**}(s,\psi,c,\chi)=Q^*(s,c,\chi)L_2(s,\psi\otimes\tilde\chi_ {c_0}\chi)$$
with \begin{equation}\label{Q*-polynomials}Q^*(s,c,\chi)=\sum_{l^2\vert c}\sigma_{2-4s}(l)Q(s,c/l^2,\chi).\end{equation}
\begin{remark}
 The two representations \eqref{therightseries}, 
\eqref{full-intermediate-expression} will be instrumental in proving
meromorphic continuation of $Z(s,w,\chi,\chi')$ to $\C^2$. The proof follows the strategy outlined in
\cite{BumpFriedbergHoffstein:1996a}, \cite{DiaconuGoldfeldHoffstein:2003a}.
The choice of arguments in the definition of
  \eqref{therightseries}, i.e.
$2w-1/2$ and $s-w+1/2$,
might seem a bit strange, but for the purpose we have in mind it is the
most natural one. We shall see that with this choice the
functional equations are especially simple.
\end{remark}

\subsection{Functional equations of the standard $L$-functions}
We now recall the functional equations for the two $L$-functions
$L(s,{\chi_{n_0}\chi})$ and $L(s,\psi\otimes\tilde\chi_ {c_0}\chi)$.

\subsubsection{$\hbox{GL}_1$} \label{gl1twists} We will use the functional equation for $L_2(s,\chi_{n_0}\chi)$ for
$n_0$ a squarefree odd natural number, and $\chi$ mod~$8$: Let $\chi_0^8$ be the trivial character mod~$8$.
We have that $\chi_{n_0}\chi$ is odd precisely if
$\chi=\chi_4\chi_0^8$ or $\chi=\chi_4\chi_8$. Also it is known  (see
e.g.\ \cite[Ch.~5]{Davenport:2000}) that  $\chi_{n_0}\chi$ is induced
from the primitive character 
\begin{equation*}
{(\chi_{n_0}\chi)^
*} =\begin{cases}
 \chi_{n_0} ,&\textrm{if }n_0\equiv 1 (4), \chi=\chi_0^8,\\
\chi_4\chi_{-n_0} ,&\textrm{if }n_0\not \equiv 1 (4), \chi=\chi_0^8,\\
\chi_4 \chi_{n_0}, &\textrm{if }n_0\equiv 1 (4), \chi=\chi_4\chi_0^8,\\
\chi_{-n_0} ,&\textrm{if }n_0\not \equiv 1 (4), \chi=\chi_4\chi_0^8,\\
\chi_8\chi_{n_0}, &\textrm{if }n_0\equiv 1 (4), \chi=\chi_8\chi_0^8,\\
\chi_4\chi_8\chi_{-n_0} ,&\textrm{if }n_0\not \equiv 1 (4), \chi=\chi_8\chi_0^8,\\
\chi_4\chi_8\chi_{n_0}, &\textrm{if }n_0\equiv 1 (4), \chi=\chi_4\chi_8\chi_0^8,\\
\chi_8\chi_{-n_0} ,&\textrm{if }n_0\not \equiv 1 (4), \chi=\chi_4\chi_8\chi_0^8.\\
\end{cases}
\end{equation*}
It follows that
\begin{equation}\label{standard-functional-equation}L(s,{(\chi_{n_0}\chi})^*)=\left(\frac{\delta_{n_0,\chi}}{\pi}\right)^{1/2-s}\frac{\Gamma\left(\frac{1-s+\kappa_{\chi}}{2}\right)}{\Gamma\left(\frac{s+\kappa_{\chi}}{2}\right)}L(1-s,{(\chi_{n_0}\chi)^*})\end{equation}
where 
\begin{equation}\label{parity}
  \kappa_\chi=\begin{cases}
0,&\textrm{ if }\chi=\chi_{0}^8,\chi_8,\\
1,&\textrm{ if }\chi=\chi_4\chi_0^8,\chi_4\chi_8.
\end{cases}
\end{equation}
\begin{equation*}
  \delta_{n_0,\chi}=\begin{cases}
n_0,&\textrm{ if }\chi=\chi_0^8, n_0\equiv 1(4)\textrm{ or }\chi=\chi_4\chi_0^8, n_0\not\equiv 1(4),\\
4n_0,&\textrm{ if }\chi=\chi_0^8, n_0\not \equiv 1(4)\textrm{ or }\chi=\chi_4\chi_0^8, n_0\equiv 1(4),\\
8n_0 ,&\textrm{ if } \chi=\chi_8, \chi_4\chi_8.
\end{cases}
\end{equation*}
Note that all the functional equations are even,
i.e.\ $\frac{G_1({(\chi_{n_0}\chi})^*)}{i^{\kappa_\chi}\sqrt{\delta_{n_0,\chi}}}=1$.
                               
We have 
\begin{align*}L(s,\chi_{n_0}\chi)&=\prod_{p\vert\frac{8n_0}{\delta_{n_0,\chi}}}(1-{(\chi_{n_0}\chi)^*}(p)p^{-s})L(s,{(\chi_{n_0}\chi)^*})\end{align*}
and, also
 \begin{align}\nonumber L_2(s,\chi_{n_0}\chi)&
  =L_2(s,{(\chi_{n_0}\chi)^*})\\  
\label{h2-def}&=L(s,{(\chi_{n_0}\chi})^*) h_2(s,n_0,\chi).
\end{align}
where $h_2(s,n_0,\chi)$ is either 1, $1-2^{-s}$, or $1+2^{-s}$. Since
$(\chi_{n_0}\chi)^*(2)$ depends only on $\chi$ and $n_0$ mod~$8$, $h_2$
has the same dependence. 

\subsubsection{$\hbox{GL}_2$}\label{sec:gl2twists}
 We now turn to $L_2(s,\psi\otimes\tilde\chi_ {c_0}\chi)$ for
$c_0$ a squarefree odd natural number, and $\chi$ mod~$8$:
The
character $\tilde \chi_{c_0}$ is primitive of conductor $c_0$, and  is even precisely when
$\tilde\chi_{c_0}(-1)=\chi_4(c_0)=1$, i.e.\ when $c_0\equiv 1 (4)$. A reference on twisting of automorphic forms (at least for modular
forms) is \cite[Sec.~14.8]{IwaniecKowalski:2004a}. 

We need to take special care of 2-factors. For any primitive
automorphic form $f$ for $GL_2$ we define a polynomial $p_{2,f}(z)$ of degree 1 or 2
depending on whether 2 is ramified or not, by
\begin{equation}\label{p2poly}\frac{1}{p_{2,f}(z)}=\sum_{j=0}^\infty
  t_{2^j}(f)z^j,\end{equation} where $t_n(f)$ are the coefficients of
$L(s,f)$. In particular the 2-factor of  $L(s,f)$ equals
$p^{-1}_{2,f}(2^{-s})$.    If $p_{2,f}$ is of degree $2$,
$p_{2,f}(z)=(1-\alpha_2 z)(1-\beta_2 z)$,  the estimate $\abs{\alpha_2},
\abs{\beta_2}<2^{1/5}$ \cite[p.~549]{Shahidi:1988}, shows that $p_{2,f}(\pm
2^{-s})$ is uniformly bounded away from 0 at $\Re (s)\geq 1/2$. If $p_{2,f}(z)$ is of degree
$1$, the explicit value of $t_2$ ($=0$ or $\pm1/\sqrt{2}$) shows that
$p_{2, f}(\pm 2^{-s})$ does not vanish on $\Re (s)\geq1/2$ and as 
a result
\begin{equation}
  \label{anotherboundagain}
  \frac{1}{p_{2,f}(\pm 2^{-s})}=O(1)
\end{equation} uniformly in $f$ when $\Re(s)\geq 1/2$.

We assume now that $\psi$ is primitive Maa{\ss} Hecke form for $\Gamma_0(4)$ with real
Fourier coefficients. The twisted function $\psi\otimes \chi$  is
still a Hecke form with trivial character $\chi^2$ but not necessarily
primitive. Let $g= (\psi\otimes\chi)^*$ be the primitive form whose Fourier coefficients agree
with those of  $\psi\otimes\chi$ except possibly at the
$2$-factor. This is a cusp form of level $N=N_{\psi,\chi}=2^j$, a divisor of $64$.
For fixed $\psi$ there are $4$ such forms $g$, as there are $4$
characters mod~$8$. We have  that $L_2(s, \psi\otimes
\chi)=L_2(s, g)$ since the Fourier coefficients of $g$ and $\psi\otimes \chi$
agree on odd numbers. 

We now twist $g$ by $\tilde\chi_{c_0}$. Since the conductor of $\tilde\chi_{c_0}$ is relatively prime to the level of $g$, the result is a primitive cusp form of level $N\cdot c_0^2$. The twisted $L$-function $L(s, \psi\otimes \tilde\chi_{c_0}\chi)$  agrees with $L(s, g\otimes \tilde\chi_{c_0})$ outside the prime $2$, so that
\begin{equation*}L_2(s, g\otimes \tilde\chi_{c_0})=L_2(s, \psi\otimes \tilde\chi_{c_0}\chi).\end{equation*}
We have the functional equation of $g\otimes \tilde\chi_{c_0}$:
\begin{equation}\label{gl2-functional-equation}
L(s, g\otimes \tilde\chi_{c_0})= \e(g, \tilde\chi_{c_0})\left(\frac{Nc_0^2}{\pi^2}\right)^{1/2-s} \prod_{\e\in\{\pm 1\}}\frac{
\Gamma\left(\frac{1-s+\kappa_{\chi,\psi,c_0}+\epsilon(s_0-1/2)}{2}\right)
}{\Gamma\left(\frac{s+\kappa_{\chi,\psi,c_0}+\epsilon(s_0-1/2)}{2}\right)}
L(1-s, g\otimes \tilde\chi_{c_0}).\end{equation}
 This functional equation involves the  root number $\e(g, \tilde\chi_{c_0})$ that  depends on $c_0$ mod~$8$
as it is given by $$\e(g)\chi^2(
c_0)\tilde\chi_{c_0}(2^j)G(\tilde\chi_{c_0})^2/c_0,$$ where $\e(g)$ is
the root number of $g$.
We have 
\begin{equation*}L_2(s, \psi\otimes \tilde\chi_{c_0}\chi)={H_2(s, g,
    c_0) }L(s, g\otimes \tilde\chi_{c_0})\end{equation*}
where
\begin{equation}\label{def-H2}
  H_2(s, g,
    c_0)=p_{2,g\otimes\tilde \chi_{c_0}}(2^{-s})=p_{2,g}(\tilde \chi_{c_0}(2)2^{-s}).
\end{equation}
The dependence of $H_2( s, g,
  c_0)$ on $c_0$ is only mod~$8$, as it involves
    $\tilde\chi_{c_0}(2)$. We note also that
    $\kappa_{\chi,\psi,c_0}=\kappa_{\chi,\psi}\tilde\chi_{c_0}(-1)$
    depends only on $c_0$ mod~$4$ since $\tilde\chi_{c_0}(-1)=\chi_{4}(c_0)$.
\begin{remark}
In the $GL_1\times GL_1$ case, i.e.\ if $\psi=\psi_\tau$ and $t_n=\tau(n)$,
we have
\begin{equation*}
L(s,\psi_\tau\otimes\tilde\chi_{c_0}\chi) := \sum_{n=1}^\infty 
\frac{\tau(n)\tilde\chi_{c_0}\chi(n)}{n^s}=L(s,\tilde \chi_{c_0}\chi)^2.
\end{equation*}
We see (after using quadratic reciprocity) that the analogues of
the results of this section follow from section \ref{gl1twists}.
\end{remark}

\subsection{Average bounds on twisted $L$-functions}

Before we can give the proof of the meromorphic continuation we recall
a few facts concerning the involved $L$-series. We first recall an
average bound on $L$-functions twisted with quadratic characters. The
main ingredient in proving such a bound is Heath-Brown's large sieve estimate for quadratic characters. He
proves  \cite[Theorem 1]{Heath-Brown:1995} that for any positive
$\varepsilon>0$ there exists a constant $C>0$ such that for any positive integers $M$,
$N$  and for arbitrary complex numbers $a_1,\ldots, a_N$ we have 
\begin{equation}\label{Heath-Brown-large-sieve}
  \sum_{m\leq M}\!\!\!\hbox{}^*\abs{\sum_{n\leq
      N}\!\!\!\hbox{}^*a_n\jacobi{n}{m}}^2\leq
  C(MN)^\varepsilon(M+N)\sum_{n\leq N}\!\!\!\hbox{}^*\abs{a_n}^2. 
\end{equation}
Here a $*$ means summation over positive odd squarefree integers. From
this one can prove the following
\begin{theorem}\label{average}
  For $\Re(s)\geq1/2$ 
\begin{equation}\label{heath-brown-average}
\sum_{\substack{1<d_0\leq X\\
d_0\textrm{ odd} 
\\ \textrm{squarefree}}}\abs{L(s,\chi_{ d_0}\chi)}^4   =O((X\abs{s})^{1+\e}), 
\end{equation}
\begin{equation}\label{generalized-average}
\sum_{\substack{1<d_0\leq X\\
d_0\textrm{ odd}
    \\\textrm{squarefree}}}\abs{L(s,\psi\otimes\tilde \chi_{ d_0}\chi)}^2   =O((X\abs{s})^{1+\e}) .
  \end{equation}
\end{theorem}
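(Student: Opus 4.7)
The plan is to prove both bounds by combining Heath-Brown's large sieve inequality \eqref{Heath-Brown-large-sieve} with an appropriate approximate functional equation, reducing each $L$-value to a short character sum of length roughly the square root of the analytic conductor. In both cases the analytic conductor on the line $\Re(s)=\sigma \geq 1/2$ is, up to an absolute factor coming from the mod-$8$ character $\chi$ and from bounded $2$-factors, of size $d_0(1+\abs{s})$ (for $GL_1$, since $L(s,\chi_{d_0}\chi)^2$ has conductor $\asymp d_0^2(1+\abs{s})^2$) and $c_0(1+\abs{s})$ (for $GL_2$, using the functional equation of $g\otimes \tilde\chi_{c_0}$ from \eqref{gl2-functional-equation}).

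For \eqref{heath-brown-average}, I would first pass from $L(s,\chi_{d_0}\chi)$ to $L(s,(\chi_{d_0}\chi)^*)$ using \eqref{h2-def}, noting that the $2$-factor $h_2(s,d_0,\chi)$ is bounded uniformly in $d_0,s$ on $\Re(s)\geq 1/2$. Then an approximate functional equation for $L(s,(\chi_{d_0}\chi)^*)^2$ — whose Dirichlet coefficients are $\tau(n)(\chi_{d_0}\chi)^*(n)$ — gives
\begin{equation*}
L(s,(\chi_{d_0}\chi)^*)^2 = \sum_{n\leq Y} \frac{\tau(n)(\chi_{d_0}\chi)^*(n)}{n^{s}}V_s\!\left(\tfrac{n}{Y}\right) + (\text{dual term})
\end{equation*}
with $Y\asymp d_0(1+\abs{s})$ and smooth cut-offs $V_s$ of polynomial growth in $\abs{s}$. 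Using quadratic reciprocity \eqref{quadrep-characters} to replace $(\chi_{d_0}\chi)^*(n)$ by $\tilde\chi_{d_0}(n)=\jacobi{n}{d_0}$ times a factor depending only on $d_0,n$ mod~$8$, I would split the sum according to these residues mod~$8$ so as to put it into the shape required by \eqref{Heath-Brown-large-sieve}. After Cauchy--Schwarz and taking absolute values (so $\abs{L}^4=\abs{L^2}^2$), the large sieve with $M=X$, $N\asymp X(1+\abs{s})$ and $a_n=\tau(n)\chi(n)n^{-s}V_s(n/N)$ gives
\begin{equation*}
\sum_{d_0\leq X}{}^*\abs{L(s,\chi_{d_0}\chi)}^4 \ll (X\abs{s})^{\varepsilon}\bigl(X+X(1+\abs{s})\bigr)\sum_{n\leq N}\frac{\tau(n)^2}{n^{2\sigma}},
\end{equation*}
and on $\sigma\geq 1/2$ the inner sum is $O(N^{\varepsilon})$ (on the critical line it is $O(\log^4 N)$ by the standard Dirichlet-divisor estimate), yielding the claimed $O((X\abs{s})^{1+\varepsilon})$.

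For \eqref{generalized-average}, I would proceed analogously, working with the primitive form $g\otimes\tilde\chi_{c_0}$ via \eqref{gl2-functional-equation}, since the $2$-factor $H_2(s,g,c_0)$ of \eqref{def-H2} is uniformly bounded on $\Re(s)\geq 1/2$ by \eqref{anotherboundagain}. The approximate functional equation for $L(s,g\otimes\tilde\chi_{c_0})$ has length $Y\asymp c_0(1+\abs{s})\leq X(1+\abs{s})$, with coefficients $a_n\tilde\chi_{c_0}(n)$ where $a_n$ are the (normalized) Hecke eigenvalues of $g$. Splitting mod~$8$ to isolate the Jacobi-symbol factor and applying \eqref{Heath-Brown-large-sieve} with $M=X$, $N=X(1+\abs{s})$ leaves a sum
\begin{equation*}
\ll (X\abs{s})^{\varepsilon}\bigl(X+X(1+\abs{s})\bigr)\sum_{n\leq N}\frac{\abs{a_n}^2}{n^{2\sigma}},
\end{equation*}
and the Rankin--Selberg estimate \eqref{RPC-average} shows $\sum_{n\leq N}\abs{a_n}^2/n^{2\sigma}=O(N^{\varepsilon})$ on $\sigma\geq 1/2$. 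This gives the bound $O((X\abs{s})^{1+\varepsilon})$.

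The main technical obstacle I anticipate is the careful bookkeeping required to convert our non-primitive symbols $\chi_{d_0}$ and $\tilde\chi_{c_0}$ (twisted by characters mod $8$ and decorated by possibly ramified $2$-factors) into the clean Jacobi-symbol shape $\jacobi{n}{m}$ required by Heath-Brown's large sieve \eqref{Heath-Brown-large-sieve}, and to control the parity-dependent gamma factors from \eqref{standard-functional-equation} and \eqref{gl2-functional-equation} uniformly in $d_0$ mod~$8$. Everything else is a standard application of the approximate functional equation and the large sieve.
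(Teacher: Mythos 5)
Your proposal is correct and is essentially the paper's approach: the paper gives no independent proof here, but simply cites \cite[Theorem~2]{Heath-Brown:1995} for \eqref{heath-brown-average} and notes that \eqref{generalized-average} is proved "in the same way" (pointing to \cite{SoundararajanYoung:2010a} and \cite{ChintaDiaconu:2005}), and the argument behind those citations is exactly your combination of an approximate functional equation of length about the square root of the conductor with the quadratic large sieve \eqref{Heath-Brown-large-sieve}. The one technical point worth adding to your list of bookkeeping items is that the inner sum in \eqref{Heath-Brown-large-sieve} runs only over \emph{squarefree} odd $n$, whereas the Dirichlet coefficients $\tau(n)$ and $a_n$ do not, so one must first write $n=n_0n_1^2$, apply the sieve for each fixed $n_1$, and sum over $n_1$ (the resulting series converges for $\Re(s)\geq 1/2$), as in Heath-Brown's own deduction of his Theorem~2.
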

The bound
(\ref{heath-brown-average}) is already in \cite[Theorem~2]{Heath-Brown:1995}  
and (\ref{generalized-average}) is essentially proved in the same way. See also
\cite[Sec.~2.3]{SoundararajanYoung:2010a} and \cite[Lemma~3.2]{ChintaDiaconu:2005}. 
These bounds give 
the Lindel\"of hypothesis on average in the character aspect, while
keeping the convexity bound in the $s$ aspect when $\Re(s)=1/2$.
\begin{remark} By considering 2-factors it is straightforward to see
  that the above bounds, i.e.\ \eqref{heath-brown-average} and
  \eqref{generalized-average} are true also if we remove 2-factors, 
  i.e.\ replace $L$ by $L_2$.
\end{remark}
\subsection{Meromorphic continuation and functional equations of
  $Z(s,w,\chi,\chi')$}
We first analyze $Z(s,w,\chi,\chi')$ from the representation \eqref{therightseries}.
\begin{theorem}\label{first-functional-equation} The function $(w-3/4)Z(s,w,\chi,\chi')$ is analytic in
  $$D_1=\{(s, w):\Re(s-w)>1/2, \Re(s+w)>3/2\},$$ and satisfies a functional
  equation $\alpha: (s,w)\mapsto (s, 1-w)$ given by
  \begin{equation*}
   (1-2^{-(3-4w)}) Z(s,w,\chi,\chi') = 
   \frac{\Gamma\left(\frac{3/2-2w+\kappa_{\chi'}}{2}\right)}{\Gamma\left(\frac{2w-1/2+\kappa_{\chi'}}{2}\right)} 
   \sum_{\chi'' \!\!\!\!\! \mod 8}p_{\chi''}(w)Z(s,1-w,\chi'',\chi').
  \end{equation*}
Here the $p_{\chi''}(w)$ are polynomials in $2^{-w}$. In
  particular they are  bounded in vertical strips. Furthermore, away from $w=3/4$
\begin{align*}
Z(s,&w,\chi,\chi')=\\&=\begin{cases}O((\abs{w}+1)^{1/4+\varepsilon}), &\textrm{
    for } 1/2\leq \Re{w}\leq K \textrm{ and } \Re(s-w)\geq 1/2+\delta,\\
O((\abs{w}+1)^{1/4 +1-2\Re(w)+\varepsilon}), &\textrm{
    for } -K \leq \Re{w}\leq 1/2 \textrm{ and }\Re(s+w)\geq 3/2+\delta
\end{cases}
\end{align*}
for any fixed $K>1/2$ and $\delta>0$.
\end{theorem}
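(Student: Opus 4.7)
The plan is to proceed in three phases: establish convergence and identify the pole in a first region, derive the functional equation, and then extract the growth bounds.

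\textbf{Convergence in the first region.} First I would show that the defining series \eqref{therightseries} converges absolutely and uniformly on compact subsets of $\{\Re(w)\geq 1/2,\,\Re(s-w)>1/2\}$ away from $w=3/4$. The bound $|q(2w-1/2,n,\chi')|=O(n^\varepsilon)$ from Proposition~\ref{correction-polynomials}\eqref{prop3} reduces the question to bounding $\sum_n |t_n L_2(2w-1/2,\chi_{n_0}\chi')|/n^{\Re(s-w)+1/2}$. I would apply Hölder's inequality with exponents $4/3$ and $4$ to obtain
\begin{equation*}
\Big(\sum_n \tfrac{|t_n|^{4/3}}{n^{4\sigma_1/3}}\Big)^{3/4}\Big(\sum_n \tfrac{|L_2(2w-1/2,\chi_{n_0}\chi')|^{4}}{n^{4\sigma_2}}\Big)^{1/4},\quad \sigma_1+\sigma_2=\Re(s-w)+\tfrac{1}{2}.
\end{equation*}
A second Hölder step plus \eqref{RPC-average} makes the first factor finite for $\sigma_1>3/4$; splitting $n=n_0n_1^2$, summing over $n_1$, and applying partial summation to Heath-Brown's fourth-moment estimate \eqref{heath-brown-average} makes the second factor finite for $\sigma_2>1/4$, giving convergence precisely when $\Re(s-w)>1/2$. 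The only singularity in this region is the simple pole at $w=3/4$ coming from $L_2(2w-1/2,\chi_{n_0}\chi')=\zeta_2(2w-1/2)$ in the principal character case. Note that $\{\Re(w)\geq 1/2,\,\Re(s-w)>1/2\}$ is exactly the portion of $D_1$ with $\Re(w)\geq 1/2$.

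\textbf{Functional equation and extension to $D_1$.} Next, I would apply the standard functional equation \eqref{standard-functional-equation} to $L(2w-1/2,(\chi_{n_0}\chi')^*)$, which produces the factor $(\delta_{n_0,\chi'}/\pi)^{1-2w}$ together with the Gamma ratio depending only on $\kappa_{\chi'}$. After accounting for $L_2=h_2\cdot L$ as in \eqref{h2-def}, all remaining dependencies on $n_0\bmod 8$ (coming from $\delta_{n_0,\chi'}$, from $h_2$, and from $\chi_{n_0}$ via quadratic reciprocity \eqref{quadrep-characters}) are expressed as linear combinations of Dirichlet characters $\chi''$ mod $8$ evaluated at $n_0$. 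Using $\delta_{n_0,\chi'}^{1-2w}/n^{s-w+1/2}=(\text{const})/(n_0^{s+w-1/2}n_1^{2(s-w+1/2)})$ and re-interpreting each term of the resulting sum as a term in the defining series of $Z(s,1-w,\chi'',\chi')$ yields the claimed linear combination $\sum_{\chi''\!\!\mod 8}p_{\chi''}(w)Z(s,1-w,\chi'',\chi')$. The pre-factor $(1-2^{-(3-4w)})$ on the left emerges from the 2-part bookkeeping needed to convert completed $L$-factors back to their $2$-removed counterparts, and the $p_{\chi''}(w)$ come out as polynomials in $2^{-w}$ because the $h_2$-factors are of this type. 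Since the first region already covers $\{\Re(w)\geq 1/2\}\cap D_1$, the functional equation provides the extension to the complementary part $\{\Re(w)\leq 1/2\}\cap D_1$ (where $\Re(s+w)>3/2$ is precisely the condition that places $1-w$ in the first region).

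\textbf{Growth bounds.} For $\Re(w)\in[1/2,K]$ with $\Re(s-w)\geq 1/2+\delta$ I would use the series representation directly; the $|w|$-dependence in \eqref{heath-brown-average} is $(1+|w|)^{1+\varepsilon}$, and the fourth root in the Hölder bound yields $Z=O((1+|w|)^{1/4+\varepsilon})$. For $\Re(w)\in[-K,1/2]$ with $\Re(s+w)\geq 3/2+\delta$ I would pass through the functional equation: each $Z(s,1-w,\chi'',\chi')$ on the right is bounded by the previous case, while Stirling's formula applied to the Gamma ratio contributes an extra factor of $(1+|w|)^{1-2\Re(w)}$, giving the stated estimate. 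The polynomials $p_{\chi''}(w)$ and the factor $(1-2^{-(3-4w)})$ are bounded in vertical strips and do not affect the exponent.

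\textbf{Main obstacle.} The technical heart is the derivation of the functional equation: carrying the mod-$8$ dependence through the primitive character $(\chi_{n_0}\chi')^*$, separating the 2-factors $h_2$ correctly, and verifying that the precise pre-factor $(1-2^{-(3-4w)})$ arises from the 2-part of the standard $L$-function manipulation requires careful combinatorial bookkeeping in the spirit of \cite{BrubakerBucurChintaFrechetteHoffstein:2004}, adapted to our $L^*$ normalization.
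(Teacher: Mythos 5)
Your proposal follows essentially the same route as the paper's proof: direct convergence of \eqref{therightseries} for $\Re(w)\geq 1/2$ using the Ramanujan bound on average together with Heath-Brown's fourth-moment estimate, reflection via the $\hbox{GL}_1$ functional equation and a mod-$8$ character decomposition to obtain the functional equation and to cover $\Re(w)\leq 1/2$, and Stirling for the extra factor $(1+\abs{w})^{1-2\Re(w)}$. (The paper uses Cauchy--Schwarz twice where you use H\"older with exponents $4/3$ and $4$; both yield the region $\Re(s-w)>1/2$ and the exponent $\abs{w}^{1/4+\varepsilon}$.)

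One point needs tightening. As written, your convergence estimate is applied to the full sum over $n$, but Theorem~\ref{average} is a statement about squarefree $d_0>1$: the perfect-square terms $n=m^2$ (where $n_0=1$) are not covered by it, and they are exactly the terms producing the pole at $w=3/4$, since for them $L_2(2w-1/2,\chi_{n_0}\chi')=L_2(2w-1/2,\chi')$. The paper splits these off, writes their contribution as $L_2(2w-1/2,\chi')$ times an $m$-sum that converges in the larger region $\{\Re(s-w)>0,\ \Re(s+w)>1\}$, and reads off the simple pole (present only for trivial $\chi'$) from the prefactor; without this separation the assertion that $(w-3/4)Z(s,w,\chi,\chi')$ is analytic near $w=3/4$ is not justified, and the growth of these terms must be handled by the convexity bound for $L_2(2w-1/2,\chi')$ rather than by the average bound. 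Relatedly, in the functional-equation step you should invoke Proposition~\ref{correction-polynomials}~\eqref{prop2} explicitly: the factor $(n_1^2)^{1-2w}$ it supplies is what converts the exponent $n_1^{2(s-w+1/2)}$ in your displayed identity into $n_1^{2(s+w-1/2)}$, so that each term really is a term of the defining series of $Z(s,1-w,\chi'',\chi')$.
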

\begin{remark} We shall see in the proof that the factor $(w-3/4)$
  is only necessary when $\chi'$ is trivial. We note also that the
  implied constant may depend on $\psi$. Moreover, the bounds given 
  above are not necessarily optimal. All we need for Theorem
  \ref{full-continuation} and Lemma
  \ref{approximate-functional-equation} below is polynomial control.
\end{remark}
\begin{proof}
We remark that the factor $ \zeta_2(4s-1) $ appearing in 
(\ref{therightseries}) does not have a pole in the region $ D_1 $. Thus 
we only have to study the series from (\ref{therightseries}) to prove the 
analytic properties of $ (w-3/4) Z(s, w, \chi, \chi') $.

We consider the regions where the series representation
(\ref{therightseries}) is absolutely convergent. We consider first the
sum over all \emph{non}-perfect squares ($n\ne m^2$).

If
$\Re(w)\geq 1/2$ (which corresponds to $\Re(2w-1/2)\geq 1/2$) 
we use
(\ref{RPC-average}), Theorem~\ref{average}, 
Proposition~\ref{correction-polynomials}~(\ref{prop3}), and 
Cauchy-Schwarz to see that away from $w=3/4$
\begin{equation}
  \label{bounding-coeffients}
\sum_{\substack{n\leq X\\ n\neq m^2  }}\abs{t_{ n}\chi(n)q(2w-1/2,
  n, \chi')L_2(2w-1/2,\chi_{n_0}\chi')}=O(X^{1+\varepsilon}\abs{w}^{1/4+\varepsilon}).
\end{equation}
It follows that the non-perfect square contribution is convergent for
$\Re(s-w)\geq 1/2+\delta$ and $ \Re (w) \geq 1/2 $ 
and that in the region $\Re(s-w)\geq 1/2+\delta$, $\Re(w)\geq 1/2$
it is analytic and
bounded by $O(\abs{w}^{1/4+\varepsilon})$. 

For $\Re(w)\leq 1/2$ we use Proposition~\ref{correction-polynomials}~(\ref{prop2}) 
and the functional equation for $L_2(2w-1/2,\chi_{n_0}\chi')$ to see that the product 
$q(2w-1/2,n,\chi')L_2(2w-1/2,\chi_{n_0}\chi')$ equals 
\begin{align*}
n^{1-2w} \left(\frac{\delta_{n_0,\chi'}}{n_0\pi}\right)^{1-2w} 
\frac{\Gamma\left(\frac{3/2-2w+\kappa_{\chi'}}{2}\right)}{\Gamma\left(\frac{2w-1/2+
\kappa_{\chi'}}{2}\right)}q(2(1-w)-1/2,n,\chi') L_2(2(1-w)-1/2,\chi_{n_0}\chi')
\end{align*}
times a factor $h_2(2w-1/2,n_0,\chi)/h_2(2(1-w)-1/2,n_0,\chi)$ which
is bounded when $\Re(w)\leq 1/2$ (recall \eqref{h2-def} for the
definition of $h_2$).
 We notice that $\delta_{n_0,\chi'}/n_0$ is 1, 4, or 8, and that in
 bounded $w$-strips the quotient of $\Gamma$-factors is
 $O(\abs{w}^{1-2\Re(w)})$. It follows that in bounded $w$-strips and
 for $\Re(w)\leq 1/2$ we have
\begin{align*}
  \label{anotherbounding-coeffients}
\sum_{\substack{n\leq X\\n \neq m^2}}&\abs{n^{2w-1}t_{ n}\chi(n)q(2w-1/2,
  n, \chi')L_2(2w-1/2,\chi_{n_0}\chi')}\\
&=O(\abs{w}^{1-2\Re(w)})\sum_{n\leq X}\abs{t_{ n}\chi(n)q(2(1-w)-1/2,
  n, \chi')L_2(2(1-w)-1/2,\chi_{n_0}\chi')}\\
&=O(\abs{w}^{1/4+1-2\Re(w) + \epsilon}X^{1+\varepsilon}),
\end{align*}
where in the last line we have used the same argument as used to bound
(\ref{bounding-coeffients}). It follows that when $\Re(s+w)\geq 3/2+\delta$, $\Re(w) 
\leq 1/2$ the non-square contribution from the 
series in (\ref{therightseries}) converges absolutely and that in this
region this contribution  is analytic and
bounded by $O(\abs{w}^{1/4+1-2\Re(w)+\varepsilon})$. 

We next consider the sum over all perfect squares $n=m^2$, 
\begin{equation*}
L_2(2w-1/2,\chi') \sum_{\substack{m=1\\(m,2)=1} }^\infty 
\frac{t_n\chi(n)q(2w-1/2,m^2,\chi')}{m^{2(s-w+1/2)}}. 
\end{equation*}
Using Proposition~\ref{correction-polynomials} and \eqref{RPC-average} we easily see that the
sum is convergent in $$\{(s, w):\Re(s-w)>0, \Re (s+w)>1\},$$ and that the
factor in front has a simple pole at $w=3/4$ if $\chi'$ is
trivial. That this contribution has the desired growth properties
follows from the convexity estimate on $L_2(2w-1/2,\chi')$.  

Having established that $(w-3/4)Z(s,w,\chi,\chi')$ is analytic in $D_1$, 
we now show that it satisfies a functional equation here. For $(s,w)$ in 
this region we use the functional equation \eqref{standard-functional-equation} 
and Proposition~\ref{correction-polynomials} and the subsequent discussion to 
see that $\frac{Z(s,w,\chi,\chi')}{\zeta_2(4s-1)}$ equals
\allowdisplaybreaks\begin{align*}
 \sum_{\substack{n=1\\(n,2)=1}}^\infty&\frac{t_{ n}\chi(n)q(2w-1/2,
  n, \chi')L_2(2w-1/2,\chi_{n_0}\chi')}{{n}^{s-w+1/2}} \\
=&\sum_{\substack{n=1\\(n,2)=1}}^\infty
 n^{1-2w} \left(\frac{\delta_{n_0,\chi'}}{n_0\pi}\right)^{1-2w}\frac{h_2(2w-1/2,n_0,\chi')}{h_2(2(1-w)-1/2,n_0,\chi')}
\frac{\Gamma\left(\frac{3/2-2w+\kappa_{\chi'}}{2}\right)}{\Gamma\left(\frac{2w-1/2+\kappa_{\chi'}}{2}\right)}\\
&\quad\quad 
\cdot \frac{t_{n}\chi(n)q(2(1-w)-1/2,
  n, \chi')L_2(2(1-w)-1/2,\chi_{n_0}\chi')}{{n}^{s-w+1/2}} \\
=&\pi^{2w-1}\frac{\Gamma\left(\frac{3/2-2w+\kappa_{\chi'}}{2}\right)}{\Gamma\left(\frac{2w-1/2+\kappa_{\chi'}}{2}\right)}\cdot \sum_{\substack{n=1\\(n,2)=1}}^\infty\frac{h_2(2w-1/2,n_0,\chi')}{h_2(2(1-w)-1/2,n_0,\chi')}
\left(\frac{\delta_{n_0,\chi'}}{n_0}\right)^{1-2w}
\\ 
& \quad\quad\cdot\frac{t_{ n}\chi(n)q(2(1-w)-1/2,
  n, \chi')L_2(2(1-w)-1/2,\chi_{n_0}\chi')}{{n}^{s-(1-w)+1/2}} .
\end{align*}
We split the sum according to $n$ mod~$8$, and notice that for fixed $\chi'$ 
the function $\frac{h_2(2w-1/2,n_0,\chi')}{h_2(2(1-w)-1/2,n_0,\chi')}
\left(\frac{\delta_{n_0,\chi'}}{n_0}\right)^{1-2w}$
is the same fraction of Dirichlet polynomials in $2^{-w}$ throughout each
of these sums, so that we can put them outside the sums. 
Using again that the indicator function of residue class
mod~$8$ can be written as a linear combination of characters mod~$8$ (at
least on the odd numbers) we arrive at the functional equation for $Z(s,w,\chi,\chi')$. 
We note that the factor $1-2^{-(3-4w)}$ is the product of all possible $h_2(2(1-w)-1/2,  n_0, \chi')$. 
This shows that the $p_{\chi''}(w)$ are in fact polynomials
  in $2^{-w}$.
\end{proof}

We now apply the same type of analysis to the second series
representation of $Z(s,w,\chi,\chi')$  given in
\eqref{full-intermediate-expression}.
Recall from Section~\ref{sec:gl2twists} that $g$ denotes
$(\psi\otimes\chi)^*$ where $\chi$ is one of the 4 characters mod~$8$. 
Let 
\begin{equation*}
  V(s,w)=\prod_{g}p_{2,g}(2^{-(w-s+1/2)})p_{2,g}(-2^{-(w-s+1/2)}).
\end{equation*}
where $p_{2,g}(z)$ is as in \eqref{p2poly}.

\begin{theorem}\label{second-functional-equation} 
  The function $(s-w-1/2)^2Z(s,w,\chi,\chi')$ is analytic in
  $$D_2=\{(s, w): \Re(s)>3/4, \Re(w)>3/4\},$$ and satisfies a functional
  equation $\beta: (s,w)\mapsto (w,s)$ given by
  \begin{align*}
 V(s&,w){ Z(s,w,\chi,\chi')}\\&=
 \sum_{\stackrel{k=0, 1} {\chi''
      \!\!\!\!\!\mod 8}} 
\prod_{\epsilon\in\{\pm 1\}}\frac{
\Gamma\left(\frac{1-(s-w+1/2)+k+\epsilon(s_0-1/2)}{2}\right)}{\Gamma\left(\frac{(s-w+1/2)+k+\epsilon(s_0-1/2)}{2}\right)}
   P_{\psi,\chi,\chi''}(s,w) Z(w,s,\chi,\chi'').
  \end{align*}
Here the $P_{\psi,\chi,\chi''}(s,w)$ are polynomials in
  $2^{-(s-w)}$. In particular they are functions bounded in vertical strips. 
Furthermore, away from $(s-w-1/2)=0$, 
\begin{equation*}
Z(s,w,\chi,\chi')=\begin{cases}O((\abs{s-w}+1)^{1/2+\varepsilon}), &\textrm{
    for } 3/4+\delta\leq \Re{w}\leq \Re(s)\leq K,\\
O((\abs{s-w}+1)^{3/2-2\Re(s-w+1/2+\varepsilon)}), &\textrm{
    for } 3/4+\delta \leq \Re{s}\leq \Re(w)\leq K,
\end{cases}
\end{equation*}
where $K$ is any constant with $K>3/4$ and any $\delta>0$. 
\end{theorem}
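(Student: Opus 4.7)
The plan is to analyze $Z(s,w,\chi,\chi')$ through the second series representation \eqref{full-intermediate-expression}, whose Dirichlet series in $c$ has coefficients $\chi'(c)L^*(s-w+1/2,c,\psi,\chi)$, and to mimic the strategy of Theorem \ref{first-functional-equation} with the roles of $n$ and $c$, and of the $\hbox{GL}_1$ and $\hbox{GL}_2$ functional equations, interchanged. Since $\zeta_2(4s-1)\zeta_2(4w-1)$ is analytic in $D_2$, the question reduces to studying the $c$-series. I split the sum by writing $c=c_0c_1^2$ with $c_0$ squarefree and treat the contributions of perfect squares ($c_0=1$) and of non-square $c$ ($c_0>1$) separately.

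For the non-square contribution, when $\Re(s-w+1/2)\geq 1/2$ I apply Cauchy-Schwarz to bound
\[
\sum_{\substack{c\leq X\\c_0>1}}\abs{Q(s-w+1/2,c,\chi)\,L_2(s-w+1/2,\psi\otimes\tilde\chi_{c_0}\chi)}
\]
using Lemma \ref{Q-on-square-average} for the $Q$-factor and the $\hbox{GL}_2$ average \eqref{generalized-average} for the $L$-factor (the sum over $c$ is first rearranged into a sum over squarefree $c_0$ with the $c_1$-multiplicity $(X/c_0)^{1/2}$, and then handled by partial summation). This yields $O(X^{1+\varepsilon}(\abs{s-w}+1)^{1/2+\varepsilon})$, and a further partial summation in $c$ gives absolute convergence when $\Re(w)>3/4$, $\Re(s)\geq \Re(w)$, together with the first growth estimate. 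For the opposite region $\Re(s)<\Re(w)$ in $D_2$ I apply the $\hbox{GL}_2$ functional equation \eqref{gl2-functional-equation} termwise and use Proposition \ref{More-correction-polynomials}(\ref{prop22}) in the form $Q(s-w+1/2,c,\chi)=(c_1^2)^{1-2(s-w+1/2)}Q(w-s+1/2,c,\chi)$. The factors $(c_1^2)^{1-2(s-w+1/2)}$ and $c_0^{1-2(s-w+1/2)}$ (the latter from the conductor $Nc_0^2$) combine with $c^{-(2w-1/2)}$ to produce exactly $c^{-(2s-1/2)}$, so that the reflected series has the shape of the $c$-series for $Z(w,s,\chi,\chi'')$. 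Stirling applied to the Gamma quotient supplies $O((\abs{s-w}+1)^{1-2\Re(s-w+1/2)})$, and combining with the previous estimate yields the second growth bound.

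For the perfect-square contribution ($c_0=1$) the relevant $L$-factor is $L_2(s-w+1/2,\psi\otimes\chi)$. When $\psi$ is a Maa\ss\ cusp form this is entire, but when $t_n=\tau(n)$ we have $L(s,\psi\otimes\chi)=L(s,\chi)^2$, producing a double pole at $s-w+1/2=1$ whenever $\chi$ is trivial. This accounts for the factor $(s-w-1/2)^2$ in the statement, and the residual series in $c_1$ converges absolutely for $\Re(w)>3/4$ and contributes no further poles.

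Finally, the functional equation itself follows by inserting \eqref{gl2-functional-equation} into each term, splitting by the residue class of $c_0$ modulo $8$, and re-expressing the indicator functions as linear combinations of characters $\chi''$ modulo $8$. The epsilon factor $\epsilon(g,\tilde\chi_{c_0})$ and the 2-factor ratio $H_2(1-(s-w+1/2),g,c_0)/H_2(s-w+1/2,g,c_0)$ both depend on $c_0$ only modulo $8$; after multiplying by $V(s,w)$ the $H_2$-denominators cancel, leaving the polynomial factors $P_{\psi,\chi,\chi''}(s,w)$ in $2^{-(s-w)}$. The supplementary sum over $k\in\{0,1\}$ accounts for the two possible values of $\kappa_{\chi,\psi,c_0}=\kappa_{\chi,\psi}\chi_4(c_0)$ as $c_0$ varies modulo $4$. \emph{The main obstacle} is the bookkeeping in this last step: one must verify that after the substitutions and the multiplication by $V(s,w)$, the surviving factors are genuine polynomials in $2^{-(s-w)}$ rather than merely meromorphic functions, paralleling the verification for $p_{\chi''}(w)$ in Theorem \ref{first-functional-equation}.
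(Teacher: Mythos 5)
Your proposal is correct and follows essentially the same route as the paper's proof: the same series representation \eqref{full-intermediate-expression}, the same square/non-square split with Cauchy--Schwarz via Lemma \ref{Q-on-square-average} and Theorem \ref{average}, the same reflection using \eqref{gl2-functional-equation} together with Proposition \ref{More-correction-polynomials}~\eqref{prop22} (your exponent bookkeeping $c^{-(2w-1/2)}c^{-2(s-w)}=c^{-(2s-1/2)}$ matches \eqref{bigfat-equation}), and the same derivation of the functional equation by sorting $c_0$ into residue classes mod~$8$ and clearing the $H_2$-denominators with $V(s,w)$. The "main obstacle" you flag is handled in the paper exactly as you anticipate, so no gap remains.
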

\begin{remark} We shall see in the proof that the factor $(s-w-1/2)^2$
  is only necessary when $\psi$ is $\hbox{GL}_1\times \hbox{GL}_1$ and
  $\chi$ is trivial.  We note also that the implied constant
  may depend on $\psi$. Moreover, as before the bounds given 
  above are not necessarily optimal as all we need for Theorem
  \ref{full-continuation} and Lemma
  \ref{approximate-functional-equation} below is polynomial control. 
\end{remark}
\begin{proof}
We now want to find the region of absolute convergence of   \eqref{full-intermediate-expression}. 
 Consider first the
region $\Re(s-w+1/2)\geq 1/2$. We can use Cauchy-Schwarz, Theorem~\ref{average}, and 
Lemma~\ref{Q-on-square-average} to see that the sum over non-perfect squares satisfies 
\begin{equation*}
\sum_{\substack{c\leq X\\c \neq r^2, c \textrm{ odd}}}\!\!\! \abs{\chi'(c)Q(s-w+1/2,c,\chi) L_2(s-w+1/2,\psi\otimes\tilde\chi_{c_0}\chi)} = O(X^{1+\varepsilon}(1+\abs{s-w})^{\frac{1}{2}+\varepsilon}). 
\end{equation*}
Hence the sum over these terms is absolutely convergent when
$\Re(2w-1/2)\geq 1+\delta$. 
The sum over the perfect squares potentially has a double pole at $s-w+1/2=1$: 
For $t_n=\tau(n)$ we
have $L_2(s,{\psi\otimes\chi_{0}^8 })=\zeta_2^2(s)$. The sum over perfect
squares is 
$$L_2(s-w+1/2,{\psi\otimes\chi })\sum_{\substack{c=1\\c =r^2}}^\infty\frac{\chi'(c)Q(s-w+1/2,c,\chi)}{c^{2w-1/2}},$$
where the sum   is again absolutely convergent
for $\Re(2w-1/2)\geq 1+\delta$, using Cauchy--Schwarz and Lemma~\ref{Q-on-square-average}.
It follows that, when $\Re(s-w+1/2)\geq 1/2$, the sums are
convergent for $\Re(w)\geq  3/4+\delta$, and hence $Z(s,w,\chi,\chi')$ is
analytic in this region except for a potential double polar line at
$s-w+1/2=1$. We also find that in this region we have the bound
$Z(s,w,\chi,\chi')=O((1+\abs{s-w})^{1/2+\varepsilon})$.

Turning now to $\Re(s-w+1/2)\leq  1/2$ we use the functional equation
\eqref{gl2-functional-equation} and 
Proposition~\ref{More-correction-polynomials}~\eqref{prop22} to move to a 
region where we can use the same bounds as for $\Re(s-w+1/2)\geq 1/2$: 

\allowdisplaybreaks\begin{align}
\nonumber & \frac{Z(s,w,\chi,\chi')}{\zeta_2(4s-1)\zeta_2(4w-1)} 
= \sum_{\substack{c=1\\(c,2)=1}}^\infty \frac{\chi'(c)Q(s-w+1/2,c,\chi) L_2(s-w+1/2, 
\psi\otimes\tilde\chi_{c_0}\chi)}{c^{2w-1/2}}\\
\label{bigfat-equation}&=\sum_{\substack{c=1\\(c,2)=1}}^\infty \frac{\chi'(c)c^{-2(s-w)}Q(1-(s-w+1/2),c,\chi) 
\epsilon{(\psi, \tilde\chi_{c_0}\chi)\left(\frac{N_1}{\pi^2}\right)^{-(s-w)}}}{c^{2w-1/2}}\\
\nonumber& \quad\quad
\times \frac{\Gamma\left(\frac{1-(s-w+1/2)+\kappa_{\chi,\psi,c_0}+(s_0-1/2)}{2}\right) 
\Gamma\left(\frac{1-(s-w+1/2)+\kappa_{\chi,\psi,c_0}-(s_0-1/2)}{2}\right)}{\Gamma\left(\frac{(s-w+1/2) 
+\kappa_{\chi,\psi,c_0}+(s_0-1/2)}{2}\right)\Gamma\left(\frac{(s-w+1/2)+\kappa_{\chi,\psi,c_0}-(s_0-1/2)}{2}\right)}\\
\nonumber &\quad\quad\times{\frac{H_2(s-w+1/2,g_1, c_0 )}{H_2(1-(s-w+1/2),g_1, c_0 )}} L_2(1-(s-w+1/2),\psi\otimes\tilde\chi_{c_0}\chi)
\end{align}
where $g_1=(\psi\otimes \tilde\chi_{c_0}\chi)^*$ with level
$N_1c_0^2$ where $N_1$ a divisor of 64 depending on $\chi, 
\psi$ (recall \eqref{def-H2} for the definition of $H_2$).  Using the same trick as before 
with splitting the sum into perfect squares and non-perfect squares, and using the 
bounds from  Lemma~\ref{Q-on-square-average} and Theorem~\ref{average}, as
well as the Stirling bound on the Gamma factors and a trivial bound on
the 2-factors we find that $Z(s,w,\chi,\chi')$ is analytic in
$$\{(s, w):\Re(s-w+1/2)\leq 1/2, \Re{s}\geq 3/4+\delta\}$$ and bounded as
$Z(s,w,\chi,\chi')=O(1+\abs{s-w}^{1/2+\varepsilon}\abs{s-w}^{1-2\Re(s-w+1/2)})$
for $\Re(s), \Re(w)$ bounded in this region.

We have established that $Z(s,w,\chi,\chi')$ is analytic in $D_2$.
 We now show that it also satisfies
a functional equation in this region. 
Consider \eqref{bigfat-equation}:
We noticed  that  $\epsilon{(\psi,\tilde\chi_0\chi)}$,   $\kappa_{\chi,\psi,c_0}$,
and $H_2(s,g_1, c_0 )$ depend only on $c_0$ modulo 8 (see Section \ref{sec:gl2twists}). We
split the sum into residue classes modulo 8 and
we can put these data outside the sum. Since $H_2(1-(s-w+1/2), g_1, c_0)$ can have zeros in
the region we multiply the left-hand side with all possible
expressions of it, which is $V(s,w)$ and arrive at the desired
functional equation.
\end{proof}
Using the two previous theorems we can now show that
$Z(s,w,\chi,\chi')$ admits a meromorphic continuation to all of $\C^2$.
\begin{theorem} \label{full-continuation}
  The function 
  \begin{equation}\label{zeta-with-polar}
  Z^*(s,w,\chi,\chi')=(s-w-1/2)^2(s+w-3/2)^2(w-3/4)(s-3/4)Z(s,w,\chi,\chi')
  \end{equation}  
  admits an analytic continuation to
  $(s,w)\in \C^2$ with at most polynomial growth for $\Re(s)$,
  $\Re(w)$ in bounded regions.  
\end{theorem}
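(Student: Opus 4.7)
The plan is to extend $Z^*$ analytically to a connected tube domain $U+i\R^2$ whose base $U\subset\R^2$ has convex hull $\R^2$, and then invoke Bochner's tube theorem.

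First, Theorems \ref{first-functional-equation} and \ref{second-functional-equation} already show that $Z^*$ is analytic with polynomial growth on bounded vertical strips over $D_1$ and $D_2$: the factor $(w-3/4)$ cancels the only potential pole of $Z$ in $D_1$, while $(s-w-1/2)^2$ cancels the only potential pole in $D_2$, and the remaining factors of $Z^*$ are harmless polynomials. Next, I would extend to $\alpha(D_2)=\{\Re(s)>3/4,\,\Re(w)<1/4\}$ using the $\alpha$-functional equation: it expresses $Z(s,w,\chi,\chi')$ as an explicit multiplier (a ratio of Gamma factors times a Dirichlet polynomial in $2^{-w}$, divided by $1-2^{-(3-4w)}$) applied to a finite sum of $Z(s,1-w,\chi'',\chi')$. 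For $(s,w)\in\alpha(D_2)$ one has $(s,1-w)\in D_2$, so each summand is analytic away from the $D_2$-pole $s-(1-w)=1/2$, which becomes the line $s+w=3/2$ killed by $(s+w-3/2)^2$. On $\Re(w)<1/4$, the quantity $1-2^{-(3-4w)}$ is bounded away from zero (since $|2^{-(3-4w)}|<1/4$), and the numerator Gamma has its first pole only at $w=3/4\notin\alpha(D_2)$, so the multiplier contributes no new poles. An analogous argument, combining the $\beta$-functional equation of Theorem \ref{second-functional-equation} with Theorem \ref{first-functional-equation} applied to $Z(w,s,\chi,\chi'')$, extends $Z^*$ analytically to $\beta(D_1)=\{\Re(w-s)>1/2,\,\Re(w+s)>3/2\}$; the only new polar line is the image $s=3/4$ of the $D_1$-pole $w=3/4$, absorbed by $(s-3/4)$. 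There $\Re(s-w+1/2)<0$ keeps the Gamma ratio of the $\beta$-equation pole-free, and $V(s,w)$ is nonvanishing by the argument leading to \eqref{anotherboundagain}.

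Setting $U=D_1\cup D_2\cup\alpha(D_2)\cup\beta(D_1)$, the pairwise intersections $D_1\cap D_2$, $D_2\cap\beta(D_1)$, and $D_1\cap\alpha(D_2)$ are all nonempty (containing e.g.\ $(2,1),(1,2),(2,0)$), so $U$ is connected. The recession cones of the four pieces contain the rays $(\pm 1,1)$, $(1,-1)$, $(0,\pm 1)$, $(1,0)$, whose conic hull is all of $\R^2$; consequently $\mathrm{conv}(U)=\R^2$. Bochner's tube theorem then gives the holomorphic continuation of $Z^*$ to the tube over $\mathrm{conv}(U)$, i.e.\ to $\C^2$. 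Polynomial growth transfers from $U+i\R^2$ to each bounded vertical strip in $\C^2$ by a Phragm\'en--Lindel\"of argument, enclosing the strip in a slightly enlarged tube on whose boundary the polynomial bounds from the two theorems apply.

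The hard part will be the bookkeeping in the first paragraph: verifying that the explicit multipliers of the $\alpha$- and $\beta$-functional equations introduce no polar lines in $\alpha(D_2)$ or $\beta(D_1)$ beyond the four hyperplanes $\{w=3/4\},\{s=3/4\},\{s-w=1/2\},\{s+w=3/2\}$ built into $Z^*$. The key inputs are the precise location of Gamma poles relative to each region, the lower bound on $|1-2^{-(3-4w)}|$ for $\Re(w)<1/4$, and the non-vanishing of $V(s,w)$ on $\Re(s-w+1/2)<1/2$.
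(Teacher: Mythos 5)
Your proposal is correct and follows essentially the same route as the paper: establish analyticity of $Z^*$ on a union of tube domains obtained from $D_1$ and $D_2$ via the $\alpha$- and $\beta$-functional equations (checking that the Gamma factors, the $2$-factors $1-2^{-(3-4w)}$ and $V(s,w)$, and the reflected polar lines are all controlled), and then apply Bochner's tube theorem with polynomial bounds. The only difference is economy: you use four regions $D_1\cup D_2\cup\alpha(D_2)\cup\beta(D_1)$, whose convex hull is already all of $\R^2$, whereas the paper iterates the full dihedral group of functional equations through eight regions before invoking Bochner; both are valid.
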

\begin{proof}
We use repeatedly the functional
equations in Theorems~\ref{first-functional-equation} and
\ref{second-functional-equation}. We notice that these two theorems
show that  $Z^*(s,w,\chi,\chi')$ 
is analytic in the union of the two overlapping sets 
\begin{align*} 
D_1 &= \{(s, w): \Re(s-w)>1/2, \Re(s+w)>3/2\} \\ 
\intertext{and } 
D_2 &= \{(s, w):\Re(s)>3/4, \Re(w)>3/4\}.
\end{align*} 
since $(w-3/4)Z(s,w,\chi,\chi')$ is analytic in $D_1$ and
$(s-w-1/2)^2Z(s,w,\chi,\chi')$ is analytic in $D_2$.
\begin{figure}\begin{tikzpicture}
[scale=1,transform shape]
\draw[-stealth] (-1,0) -- (3,0);
\draw[](1,0.1)--(1,-0.1);
\node at (1,0.3) {${1}$};
\draw[-stealth] (0,-1) -- (0,3);
\draw[](0.1,1)--(-0.1,1);
\node at (0.3,1) {${1}$};
\node at (0.5,3) {${\Re(w)}$};
\node at (3,0.5) {${\Re(s)}$};

\draw[-] (2/2,1/2) -- (2/2+2/2,2/2+1/2);
\draw[-] (2/2, 1/2) -- (2/2+2/2,-2/2+1/2);
\draw[dashed] (2/2,1/2) -- (2/2+2/2,1/2);

\draw[-] (3/4,3/4) -- (3/4+2.828/2,3/4);
\draw[-] (3/4,3/4) -- (3/4,3/4+2.828/2);
\draw[dashed] (3/4,3/4) -- (1.414/2+3/4,1.414/2+3/4);
\end{tikzpicture}
 \caption{$D_1\cup D_2$}
\end{figure}
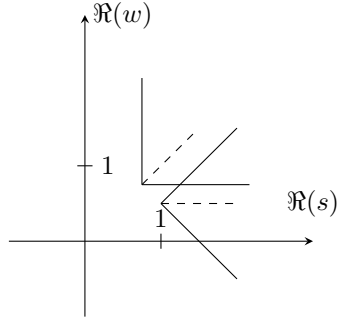
We now
use the group of functional equations generated by the two functional
equations
$$\alpha: (s,w)\mapsto (s,1-w), \quad\quad \beta: \quad(s,w)\mapsto (w,s).$$
They generate a group of order $8$ isomorphic to the dihedral group
$D_4$ of order $8$. We note that $\alpha^2=\beta^2=Id$. Using $\beta$
we see that $(s-3/4)Z(s,w,\chi,\chi')$ is
a holomorphic function of at most bounded polynomial growth (bounding
the ratio of Gamma functions using Stirling asymptotics) in
$D_3=\beta D_1$ which then extends $Z^*(s,w,\chi,\chi')$
to $D_1\cup D_2 \cup D_3$. We notice that the Gamma factor on the right-hand side 
of the functional equation in (\ref{second-functional-equation}) and $V(s,w)^{-1}$ does not 
have poles when $\Re(w-s)>0$ (by  \eqref{anotherboundagain} and properties
of the Gamma function).

We then use $\alpha$ to extend $Z^*(s,w,\chi,\chi')$ analytically to $D_1\cup
D_2 \cup\beta D_1\cup 
\alpha D_2 \cup\alpha \beta D_1$. We notice that the $2$ factor
$(1-2^{-(3-4w)})^{-1}$ and the 
Gamma factor in Theorem~\ref{first-functional-equation} are analytic
when $\Re(w)<3/4$.   The reflection $\alpha$ of the double polar line $s-w=1/2$ in $D_2$ 
produces the double polar line $s+w=3/2$ in $\alpha D_2$.
\begin{center}
\begin{tikzpicture}
[scale=1,transform shape]
\draw[-stealth] (-1,0) -- (3,0);
\draw[](1,0.1)--(1,-0.1);
\node at (1,0.3) {${1}$};
\draw[-stealth] (0,-1) -- (0,3);
\draw[](0.1,1)--(-0.1,1);
\node at (0.3,1) {${1}$};
\node at (0.5,3) {${\Re(w)}$};
\node at (3,0.5) {${\Re(s)}$};

\draw[-] (1/2,2/2) -- (1+1/2,2/2+2/2);
\draw[-] (1/2,2/2) -- (-1+1/2,2/2+2/2);
\draw[dashed] (1/2,2/2) -- (1/2,2/2+2/2);

\draw[-] (3/4,3/4) -- (3/4+2.828/2,3/4);
\draw[-] (3/4,3/4) -- (3/4,3/4+2.828/2);
\draw[dashed] (3/4,3/4) -- (1.414/2+3/4,1.414/2+3/4);

\draw[-] (2/2,1/2) -- (2/2+2/2,2/2+1/2);
\draw[-] (2/2,1/2) -- (2/2+2/2,-2/2+1/2);
\draw[dashed] (2/2,1/2) -- (2/2+2/2,1/2);
\end{tikzpicture}
\begin{tikzpicture}
[scale=1,transform shape]
\draw[-stealth] (-1,0) -- (3,0);
\draw[](1,0.1)--(1,-0.1);
\node at (1,0.3) {${1}$};
\draw[-stealth] (0,-1) -- (0,3);
\draw[](0.1,1)--(-0.1,1);
\node at (0.3,1) {${1}$};
\node at (0.5,3) {${\Re(w)}$};
\node at (3,0.5) {${\Re(s)}$};

\draw[-] (1/2,2/2) -- (2/2+1/2,2/2+2/2);
\draw[-] (1/2,2/2) -- (-2/2+1/2,2/2+2/2);
\draw[dashed] (1/2,2/2) -- (1/2,2/2+2/2);

\draw[-] (3/4,3/4) -- (3/4+2.828/2,3/4);
\draw[-] (3/4,3/4) -- (3/4,3/4+2.828/2);
\draw[dashed] (3/4,3/4) -- (1.414/2+3/4,1.414/2+3/4);
\draw[-] (2/2,1/2) -- (2/2+2/2,2/2+1/2);
\draw[-] (2/2,1/2) -- (2/2+2/2,-2/2+1/2);
\draw[dashed] (2/2,1/2) -- (2/2+2/2,1/2);

\draw[-] (1/2,1-2/2) -- (2/2+1/2,1-2/2-2/2);
\draw[-] (1/2,1-2/2) -- (-2/2+1/2,1-2/2-2/2);
\draw[dashed] (1/2,1-2/2) -- (1/2,1-2/2-2/2);

\draw[-] (3/4,1-3/4) -- (3/4,1-3/4-2.828/2) ;
\draw[-] (3/4,1-3/4) -- (3/4+2.828/2,1-3/4);
\draw[dashed] (3/4,1-3/4) -- (1.414/2+3/4,1-1.414/2-3/4)  ;
\end{tikzpicture}
\end{center}

The regions $D_4=\beta\alpha D_2$,  $D_5=\beta\alpha\beta D_1$, and 
$D_6=\alpha\beta\alpha D_2=\alpha D_4$ can be dealt with using 
Theorems~\ref{first-functional-equation} and \ref{second-functional-equation} 
in the same way and no new polar lines are introduced, neither due 
to the $2$ factors, nor the Gamma factors. 

\begin{center}
\begin{tikzpicture}
[scale=1,transform shape]
\draw[-stealth] (-1,0) -- (3,0);
\draw[](1,0.1)--(1,-0.1);
\node at (1,0.3) {${1}$};
\draw[-stealth] (0,-1) -- (0,3);
\draw[](0.1,1)--(-0.1,1);
\node at (0.3,1) {${1}$};
\node at (0.5,3) {${\Re(w)}$};
\node at (3,0.5) {${\Re(s)}$};

\draw[-] (1/2,2/2) -- (2/2+1/2,2/2+2/2);
\draw[-] (1/2,2/2) -- (-2/2+1/2,2/2+2/2);
\draw[dashed] (1/2,2/2) -- (1/2,2/2+2/2);

\draw[-] (3/4,3/4) -- (3/4+2.828/2,3/4);
\draw[-] (3/4,3/4) -- (3/4,3/4+2.828/2);
\draw[dashed] (3/4,3/4) -- (1.414/2+3/4,1.414/2+3/4);

\draw[-] (2/2,1/2) -- (2/2+2/2,2/2+1/2);
\draw[-] (2/2,1/2) -- (2/2+2/2,-2/2+1/2);
\draw[dashed] (2/2,1/2) -- (2/2+2/2,1/2);

\draw[-] (1/2,1-2/2) -- (2/2+1/2,1-2/2-2/2);
\draw[-] (1/2,1-2/2) -- (-2/2+1/2,1-2/2-2/2);
\draw[dashed] (1/2,1-2/2) -- (1/2,1-2/2-2/2);

\draw[-] (1-2/2,1/2) -- (1-2/2-2/2,2/2+1/2);
\draw[-] (1-2/2,1/2) -- (1-2/2-2/2,-2/2+1/2);
\draw[dashed] (1-2/2,1/2) -- (1-2/2-2/2,1/2);

\draw[-] (1-3/4,3/4) -- (1-3/4-2.828/2,3/4);
\draw[-] (1-3/4,3/4) -- (1-3/4,3/4+2.828/2);
\draw[dashed] (1-3/4,3/4) -- (1-1.414/2-3/4,1.414/2+3/4);

\draw[-] (1-3/4,1-3/4) -- (1-3/4-2.828/2,1-3/4);
\draw[-] (1-3/4,1-3/4) -- (1-3/4,1-3/4-2.828/2);
\draw[dashed] (1-3/4,1-3/4) -- (1-1.414/2-3/4,1-1.414/2-3/4);

\draw[-] (3/4,1-3/4) -- (3/4+2.828/2,1-3/4);
\draw[-] (3/4,1-3/4) -- (3/4,1-3/4-2.828/2);
\draw[dashed] (3/4,1-3/4) -- (1.414/2+3/4,1-1.414/2-3/4);

\path[fill=black!40, semitransparent](1/2,2/2) --(3/4,5/4)--(3/4,3/4) --(5/4,3/4)
--(2/2,1/2) --(5/4,1/4)--(3/4,1/4)--(3/4,-1/4)--(1/2,-0/2)--(1/4,-1/4)
--(1/4,1/4)--(-1/4,1/4)--(-0/2,1/2)--(-1/4,3/4)--(1/4,3/4)-- (1/4,5/4)-- cycle;
\end{tikzpicture}
\end{center}
The function in (\ref{zeta-with-polar}) is now extended to a holomorphic function on the
complement of the domain with tube given by the shaded region. It is bounded polynomially 
for $\Re(w)$, $\Re(s)$ bounded. We can therefore use Bochner's tube theorem
(see \cite{DiaconuGoldfeldHoffstein:2003a}, Propositions~4.6 and 4.7 and the argument on 
p.~341) to extend the holomorphic function
to the convex hull of this region (which is $\C^2$) with at most polynomial bounds for
$(\Re(s),\Re(w))$ in compact sets. Therefore, $Z(s, w, \chi, \chi')$ has the same properties, 
apart from being meromorphic with the specified polar lines in (\ref{zeta-with-polar}).
\end{proof}

\begin{remark}
Combining Theorems~\ref{first-functional-equation} and
\ref{second-functional-equation} we note that 
$\alpha\circ \beta\circ\alpha\circ\beta:(s,w)\mapsto(1-s,1-w)$, and it
follows that there exist functions
$\alpha_{\rho,\rho',\chi,\chi'}(s,w)$ bounded in vertical strips such that
\begin{align}  
 \nonumber
F(s, w) & {Z}(s,w,\chi,\chi') = \\ 
& 
\label{double-functional-eq} \sum_{\overline k \in\{0,1\}^4}\frac{G(1-s,1-w,\overline k)}{G(s,w,\overline
  k)}\sum_{\rho,\rho' \!\!\!\!\mod 8}\alpha_{\overline k, \rho,\rho',
  \chi,\chi'}(s,w) Z (1-s,1-w,\rho,\rho') 
\end{align}
where 
\begin{align}\nonumber
G(s,w,\overline k)
:=\Gamma&\left(\frac{2w-1/2+k_1}{2}\right)\prod_{\epsilon_1\in \{\pm 1\}} 
\Gamma\left(\frac{s+w-1/2+k_2+\epsilon_1(s_0-1/2)}{2}\right)\\
&\cdot\Gamma\left(\frac{2s-1/2+k_3}{2}\right)\prod_{\epsilon_2\in \{\pm 1\}} 
\Gamma\left(\frac{s-w+1/2+k_4+\epsilon_2(s_0-1/2)}{2}\right)
\end{align} 
and 
\begin{align*} 
F(s, w) & :=  
\big(1-2^{-(3-4w)}\big) \big(1-2^{-(3-4s)}\big) V(s,w)V(w,1-s).  
\end{align*} 
Using $\frac{\Gamma(\frac{1-z+1}{2})}{\Gamma(\frac{z+1}{2})}=\frac{\Gamma(\frac{1-z}{2})}{\Gamma(\frac{z}{2})}\cot(\frac{\pi
  z}{2})$ we see that $$\frac{G(1-s,1-w,\overline k)}{G(s,w,\overline k)}=\frac{G(1-s,1-w,0)}{G(s,w,0)}\cot_{\overline k}(s,w)$$
where 
\begin{align*}
\cot_{\overline k}(s,w) &= \cot^{k_1} \left(\frac{\pi(2w-1/2)}{2}\right) 
\prod_{\e_1\in\{\pm 1\}} \cot^{k_2} \left(\frac{\pi(s+w-1/2+\e_1(s_0-1/2))}{2}\right) \\
& \quad \cdot \cot^{k_3}\left(\frac{\pi(2s-1/2)}{2}\right)\prod_{\epsilon_2\in\{\pm 1\}} 
\cot^{k_4}\left(\frac{\pi(s-w+1/2+\epsilon_2(s_0-1/2))}{2}\right). 
\end{align*}
Since away from poles of $\cot$ we have uniform bounds
$\cot\left(\frac{\pi z }{2}\right)=i \textup{sign}(y)+O(e^{-\pi y})$, we see that
$\cot_{\overline k}(s,w)$ is bounded in vertical strips (for the
arguments away from the poles of $\cot_{\overline k}$). It follows that
the functional equation \eqref{double-functional-eq} can be written
simply as 
\begin{equation}
  \label{simplified-double-functional-eq} 
F(s,w){Z}(s,w,\chi,\chi')=
\frac{G(1-s,1-w,0)}{G(s,w,\overline 0)} 
\sum_{\substack{\rho,\rho' \mod 8\\ \overline k \in \{0,1\}^4}} 
\beta_{\overline k, \rho,\rho',\chi,\chi'}(s,w) Z (1-s,1-w,\rho,\rho')
\end{equation}
where the functions $\beta_{\overline k, \rho,\rho', \chi,\chi'}(s,w)$ 
are bounded in vertical strips (away from any poles).
\end{remark}

\subsection{Bounds on $Z(s,w,\chi,\chi')$}

In this section we bound $Z(s,w,\chi,\chi')$ when
$\Re(s)=\Re(w)=1/2$. Recall that we defined (See
\eqref{conductor-def}) the analytic conductor to be
\begin{equation}\label{analytic-conductor}\mathfrak
q(t,u):=(1+\abs{t})(1+\abs{t+u})^{2}(1+\abs{u}).\end{equation}

\begin{theorem}\label{convexity-independent}
Assume \eqref{lindelof-on-average}. Then
$$
Z(1/2+it,1/2+iu,\chi,\chi')=O(\mathfrak q(t,u)^{1/4+\varepsilon}).$$
Unconditionally $$
Z(1/2+it,1/2+iu,\chi,\chi')=O(\mathfrak (q(t,u)(1+\abs{t-u})^2)^{1/4+\varepsilon}).$$
\end{theorem}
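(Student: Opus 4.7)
The plan is to prove both bounds by an approximate functional equation for $Z(s,w,\chi,\chi')$ at $(s,w)=(1/2+it,1/2+iu)$, using the $c$-series representation \eqref{another-nice-representation} and the combined functional equation $(s,w)\mapsto(1-s,1-w)$ from \eqref{simplified-double-functional-eq}. Let $G$ be a smooth even function on $\R$ with $G(0)=1$ and rapid decay in vertical strips. For a length $X>0$, start from the Mellin integral
\begin{equation*}
\frac{1}{2\pi i}\int_{(A)}Z(s,w+z,\chi,\chi')G(z)X^{z}\frac{dz}{z}
\end{equation*}
with $A$ large enough that the series \eqref{another-nice-representation} converges absolutely at $w+z$. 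Shifting the contour to $\Re(z)=-A$ picks up the residue $Z(s,w,\chi,\chi')$ at $z=0$, and applying \eqref{simplified-double-functional-eq} on the shifted line produces a decomposition
\begin{equation*}
Z(s,w,\chi,\chi')=M_{1}(X)+M_{2}(X),
\end{equation*}
in which $M_{1}(X)$ is the smoothly truncated sum from \eqref{another-nice-representation} of length $X$ and $M_{2}(X)$ is the corresponding dual sum of length $\mathfrak{q}(t,u)/X$.

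For $M_{1}(X)$ at $(1/2+it,1/2+iu)$, partial summation combined with \eqref{lindelof-on-average} gives $|M_{1}(X)|\ll X^{1/2+\varepsilon}(1+|t-u|)^{\varepsilon}$; unconditionally, Cauchy--Schwarz applied to Theorem~\ref{average} yields the weaker $|M_{1}(X)|\ll X^{1/2+\varepsilon}(1+|t-u|)^{1/2+\varepsilon}$. The same kind of estimate (with $X$ replaced by $\mathfrak{q}(t,u)/X$) holds for $M_{2}(X)$, where the conductor $\mathfrak{q}(t,u)=(1+|t|)(1+|t+u|)^{2}(1+|u|)$ emerges via Stirling's asymptotics from the four Gamma factors $\Gamma((2s-1/2)/2)$, $\Gamma((2w-1/2)/2)$, and $\Gamma((s+w-1/2\pm(s_{0}-1/2))/2)$ in $G(s,w,0)$; the remaining two Gamma factors involving $s-w+1/2$ contribute only in the $|t-u|$-aspect, and this is precisely what \eqref{lindelof-on-average} (or Heath-Brown's inequality) is designed to absorb. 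Balancing with $X=\sqrt{\mathfrak{q}(t,u)}$ yields the conditional convexity bound $O_{\psi}(\mathfrak{q}(t,u)^{1/4+\varepsilon})$, and the extra $(1+|t-u|)^{1/2+\varepsilon}$ factor in both $M_{1}$ and $M_{2}$ unconditionally combines to produce the $(1+|t-u|)^{2}$ factor inside the $1/4$-power in \eqref{trivial-bound}.

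The main technical obstacle is rigorously tracking the emergence of $\mathfrak{q}(t,u)$ from the Mellin contour shift. The Gamma ratio $G(1-s,1-w,0)/G(s,w,0)$ has modulus $\asymp 1$ on the critical line, so the conductor has to arise from the balance between the chosen length $X$ of integration and the Stirling behavior of $G(s,w,0)$ on the shifted contour $\Re(z)=-A$, together with the $|t-u|$-absorbing mean-value input. Additionally, the contour shift crosses the polar lines of $Z^{*}$ coming from Theorem~\ref{full-continuation}, but the corresponding residues can be shown to contribute only bounded constants (in the aspects $t,u$) to the final estimate, so they do not affect the shape of the bound.
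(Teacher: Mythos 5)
Your high-level strategy (approximate functional equation from a Mellin integral, coefficients controlled by \eqref{lindelof-on-average} or Heath--Brown at $\Re=1/2$, balancing at $X=\sqrt{\mathfrak q(t,u)}$) is exactly the paper's, but there is a genuine gap in the setup of the Mellin integral: you shift only the second variable, writing $Z(s,w+z,\chi,\chi')$ and claiming that for $\Re(z)=A$ large the series \eqref{another-nice-representation} converges absolutely ``at $w+z$''. It does not. Absolute convergence of \eqref{another-nice-representation} requires \emph{both} $\Re(s)>3/4$ and $\Re(w)>3/4$ (the region $D_2$ of Theorem \ref{second-functional-equation}): the coefficient is $L^{**}(s-w+1/2,\psi,c,\chi)$, and pushing $\Re(w)$ up while $\Re(s)=1/2$ stays fixed drives $\Re(s-w+1/2)$ far to the left of the critical line, where the $\hbox{GL}_2$ functional equation forces the coefficients to grow like a positive power of $c$ that defeats the $c^{-2\Re(w)+1/2}$ decay. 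The same failure occurs on the dual side: after applying $(s,w)\mapsto(1-s,1-w)$ on the line $\Re(z)=-A$ the point $(1-s,1-w-z)$ still has first coordinate of real part $1/2$, so it lies in neither $D_1$ nor $D_2$ and the ``dual sum of length $\mathfrak q(t,u)/X$'' has no convergent series expansion. Your own conductor computation betrays the problem: you get $(1+\abs t)$ from $\Gamma((2s-1/2)/2)$ and nothing from the $s-w+1/2$ factors, which is only what happens under a \emph{diagonal} shift. The correct integrand is $Z(s+z,w+z,\chi,\chi')$, as in Lemma \ref{approximate-functional-equation}: shifting both variables keeps $s-w$ frozen at $i(t-u)$, so at $\Re(z)=1$ the point lies in $D_2$, the coefficients $L^{**}(1/2\pm i(t-u),\psi,c,\rho)$ always sit on the critical line where \eqref{lindelof-on-average} (or, unconditionally, Theorem \ref{average} plus Lemma \ref{Q-on-square-average} and Cauchy--Schwarz) applies, and the Gamma-ratio on the shifted contour produces precisely $\mathfrak q(t,u)^{\Re(z)}$.

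A secondary point: you assert that the residues picked up at the polar lines of $Z^*$ during the contour shift ``contribute only bounded constants,'' but you give no argument, and bounding, say, $\res_{w=3/4}Z(1/2+it,w,\chi,\chi')$ uniformly in $t$ is not free. The paper sidesteps this entirely by building the mollifier $H_{t,u}(z)$ out of factors $P_\eta(z)$ with prescribed zeros at the finitely many poles crossed, so that no residues arise except at $z=0$. Either adopt that device or supply the missing residue estimates. Once the integrand is corrected to the diagonal shift and the poles are handled, the rest of your argument (partial summation on the truncated sums, the extra $(1+\abs{t-u})^{1/2+\varepsilon}$ in the unconditional case assembling into $(1+\abs{t-u})^2$ inside the $1/4$-power) goes through as in the paper.
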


We call the bound obtained in Theorem \ref{convexity-independent} the
\emph{convexity} bound. Any bound of the form
$O(\mathfrak q(t,u)^{1/4-\delta})$ is called a \emph{subconvex}
bound. 

To prove Theorem~\ref{convexity-independent} we first prove an
approximate functional equation similar to the one in \cite[Lemma~4.2]{BlomerGoldmakherLouvel:2014}. 
\begin{lemma}\label{approximate-functional-equation} 
Let $t,u\in\R$ and $\chi$, $\chi'$  mod~$8$. There exist smooth functions
  $W_{\pm}:\R_+\to\C$ depending on $u,t$, and the characters
  satisfying  $$y^j\frac{d^j}{dy^j}W_{\pm}(y)=O (1+y)^{-A}$$
for all $j, A\in \mathbb N_0$, uniformly in $u,t$, such that 
$$ Z(1/2+it,1/2+iu,\chi,\chi')=\sum_{\rho, \rho' \!\!\!\!\mod
  8}\sum_{\pm}\sum_{c=1}^\infty
{\frac{\rho'(c)L^{**}(1/2\pm i(t-u),\psi,
    c,\rho)}{c^{1/2\pm
        2iu}}W_\pm\left(\frac{c}{\sqrt{\mathfrak q(t,u)}}\right)}.$$
\end{lemma}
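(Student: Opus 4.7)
The plan is a two-variable approximate functional equation argument. Set $s_0=1/2+it$, $w_0=1/2+iu$, and take the scale parameter $X=\mathfrak q(t,u)$. Let $h(v)$ be an entire, even test function with $h(0)=1$ of super-polynomial decay in vertical strips, arranged moreover to vanish (with the appropriate multiplicity) at $v=1/4-it$, $v=1/4-iu$, $v=1/4-i(t+u)/2$, the locations where the polar divisors of $Z(s_0+v,w_0+v,\chi,\chi')$ from \eqref{zeta-with-polar} cross our contour. A concrete choice is $h(v)=e^{v^2}P(v)$ with $P$ an even polynomial vanishing at these points and normalised so $P(0)=1$.

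Consider
\begin{equation*}
M := \frac{1}{2\pi i} \int_{(A)} Z(s_0+v,w_0+v,\chi,\chi')\, h(v)\, X^v\, \frac{dv}{v}, \qquad A>1.
\end{equation*}
Along the diagonal shift $(s,w)\mapsto(s+v,w+v)$, $s-w+1/2=1/2+i(t-u)$ is preserved, while $2w-1/2=1/2+2iu+2v$ carries the $v$-dependence cleanly. Substituting the Dirichlet series \eqref{another-nice-representation} on $\Re(v)=A$ and interchanging yields
\begin{equation*}
M = \sum_{(c,2)=1} \frac{\chi'(c)L^{**}(1/2+i(t-u),\psi,c,\chi)}{c^{1/2+2iu}}\, W_+\!\left(\tfrac{c}{\sqrt{X}}\right),
\end{equation*}
with $W_+(y) := \frac{1}{2\pi i}\int_{(A)} h(v)\, y^{-2v}\,\frac{dv}{v}$. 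Shifting the defining contour of $W_+$ in either direction and using the rapid decay of $h$ yields the required estimates $y^j W_+^{(j)}(y) = O((1+y)^{-A})$. On the other hand, shifting the contour in $M$ from $\Re(v)=A$ to $\Re(v)=-A$, only the simple pole at $v=0$ contributes (by design of $h$), with residue $Z(s_0,w_0,\chi,\chi')$. On the shifted contour substitute $v\mapsto -v$ (using that $h$ is even) and apply \eqref{simplified-double-functional-eq} at $(s,w)=(s_0-v,w_0-v)$. This expresses the integrand as a finite linear combination over $\rho,\rho'\bmod 8$ and $\overline k\in\{0,1\}^4$ of $Z(1-s_0+v,1-w_0+v,\rho,\rho')$ multiplied by $F(s_0-v,w_0-v)^{-1}$, the Gamma ratio $G(1-s_0+v,1-w_0+v,0)/G(s_0-v,w_0-v,0)$, and the coefficients $\beta_{\overline k,\rho,\rho',\chi,\chi'}(s_0-v,w_0-v)$, which are bounded in vertical strips away from their isolated poles. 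Substituting \eqref{another-nice-representation} once more (valid since $\Re(1-w_0+v)$ is large on $\Re(v)=A$), the $L^{**}$-argument becomes $1/2-i(t-u)$ and the $c$-exponent becomes $1/2-2iu$, matching the ``$-$'' branch in the statement.

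The choice $X=\mathfrak q(t,u)$ is forced by Stirling. A pair-by-pair analysis of the four groups of Gamma factors in $G(\,\cdot\,,\,\cdot\,,0)$ gives, on $\Re(v)=A$, magnitudes $(1+|u|)^{2\Re(v)}$ from the $2w$-pair, $(1+|t+u|)^{4\Re(v)}$ from the two $s+w$-factors, $(1+|t|)^{2\Re(v)}$ from the $2s$-pair, and a $v$-independent factor from the $s-w$-pair; by the definition \eqref{analytic-conductor} the total magnitude equals $\mathfrak q(t,u)^{2\Re(v)}$ times a piece bounded on vertical strips. Combined with $c^{-2v}X^{-v}$ and the choice $X=\mathfrak q(t,u)$, this produces the desired kernel $(c/\sqrt{\mathfrak q(t,u)})^{-2v}$, and the $-$ contribution takes the form $W_-(c/\sqrt{\mathfrak q(t,u)})$ where $W_-$ is defined by the contour integral against $h$, the normalised Gamma ratio, the $\beta_{\overline k}$, and the factor $F^{-1}$ (summed over $\rho,\rho'$ and $\overline k$ as needed). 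Its decay follows from the same contour-shift device as for $W_+$.

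\emph{The principal obstacle} is the bookkeeping to verify that, after extracting $\mathfrak q(t,u)^{2v}$ from the Gamma ratio via Stirling, the residual factor in the $v$-integrand is polynomially bounded in $\Im(v)$ uniformly in $t,u$; granted this, the super-polynomial decay of $h$ delivers the required uniform estimates on $W_\pm$ and all its derivatives. A secondary point is that $h$ itself depends on $t,u$ through the auxiliary zeros, but these zeros have modulus at most linear in $t,u$, while the Gaussian $e^{v^2}$ provides super-polynomial decay in $|\Im(v)|$, so this dependence is harmless.
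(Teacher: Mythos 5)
Your proposal is correct and follows essentially the same route as the paper: a contour integral of $Z$ along the diagonal shift against a mollifier chosen to cancel the crossed polar divisors at $v=1/4-it$, $1/4-iu$, $1/4-i(t+u)/2$, a contour shift picking up the residue $Z(s_0,w_0,\chi,\chi')$ at $v=0$, the composite functional equation $(s,w)\mapsto(1-s,1-w)$ followed by $v\mapsto -v$, and re-expansion via \eqref{another-nice-representation}. The only differences are cosmetic: the paper builds the ratio $\frac{F(s+z,w+z)}{F(s,w)}\frac{G(s+z,w+z,0)}{G(s,w,0)}$ into the integrand from the outset, so that the conductor $\mathfrak q(t,u)$ emerges from Stirling rather than from an inserted factor $\mathfrak q(t,u)^{v}$, and it uses $(\cos(\pi z/(3A)))^{-12A}$ times Dirichlet polynomials $P_\eta$ in place of your Gaussian-times-polynomial mollifier; both devices serve the same purpose and your bookkeeping of the exponents checks out.
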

\begin{proof} Recall that $1/\cos(z)$ is holomorphic in
  $\abs{\Re(z)}< \pi/2$ and satisfies
 $ 1/\cos(z)=O_{\epsilon_0}(e^{-\abs{z}})$ for  $\abs{\Re(z)}\leq
 \pi/2-\epsilon_0$. For  $\eta (\log 2)/(\pi i)$ bounded away from
 $\mathbb Z$ the function
 $P_\eta(z)=(1-2^{\eta-z})(1-2^{\eta+z})/(1-2^\eta)^2$ is uniformly bounded
 in vertical strips, holomorphic in $\C$, even in $z$, with a simple
 zero at $\eta$, and satisfies $P_\eta(0)=1$. For a given multiset
 $B$ let 
$$H_{B}(z)=(\cos(\frac{\pi z}{3A}))^{-12A}\prod_{\eta \in B}P_\eta (z)$$
which is $O(e^{-4\pi\abs{z}})$ for, say, $\abs{\Re(z)}\leq
(3/2-\delta)A$ with $\delta>0$ sufficiently small.
For an appropriate choice of multiset $B=B_{t,u}$ we set $ H_{t,u}(z) = 
H_{B_{t,u}}(z) $ so that the integrand of 
\begin{equation}\label{integral-afe}\frac{1}{2\pi i}\int_{(1)}
  \frac{F(s+z,w+z)}{F(s,w)}Z(s+z,w+z,\chi,\chi')\frac{G(s+z,w+z,0)}{G(s,w,0)} 
  H_{t,u}(z)\frac{dz}{z}\end{equation}
is holomorphic in the entire $z$-plane except for a simple pole 
at $z=0$. (The function $ H_{t,u} $ has been used to remove the poles 
of $ Z(s+z, w+z, \chi, \chi') $.) Also it has rapid decay in $z$ on
vertical lines due to Theorem~\ref{full-continuation}. Moving the line of 
integration to $\Re(s)=-1$ we see that \eqref{integral-afe} equals 
\begin{equation*} Z(s,w,\chi,\chi')+\frac{1}{2\pi i}\int_{(-1)}\!\!\!\frac{F(s+z,w+z)}{F(s,w)} Z(s+z,w+z,\chi,\chi')\frac{G(s+z,w+z,0)}{G(s,w,0)}H_{t,u}(z)\frac{dz}{z}.\end{equation*}
Using the functional equation \eqref{simplified-double-functional-eq} and the 
change of variable $z\mapsto -z$ the last integral equals 
\begin{align*} 
\sum_{\substack{\rho,\rho' \!\!\!\!\mod 8\\\overline k \in\{0,1\}^4}} 
\frac{1}{2\pi i} \int_{(1)} & \frac{\beta_{\overline k, \rho,\rho',\chi,\chi'} 
(s-z,w-z)}{F(s,w)} Z(1-s+z,1-w+z,\rho,\rho') \\ 
&\times\frac{G(1-s+z,1-w+z,0)}{G(s,w,0)}H_{t,u}(z)\frac{dz}{z}.
\end{align*}
It follows that there exist functions $ \gamma_{\overline k,\rho, 
\rho',\chi,\chi',\pm} (x, x') $ bounded if $ \Re(x)  
= \Re(x') = -1/2 $
(note that using \eqref{anotherboundagain} we see that $F(s,w)^{-1}$ is uniformly bounded) 
such that $ Z(\frac{1}{2}+it,\frac{1}{2}+iu,\chi,\chi') $ equals
\begin{align*}
\sum_{\substack{\pm \\ \rho,\rho' \!\!\!\!\mod 8 \\ \overline k \in\{0,1\}^4}} 
\frac{1}{2\pi i} \int_{(1)} \gamma_{\overline k, \rho,\rho',\chi,\chi',\pm} 
\Big(\frac{1}{2}\pm i t-z, & \frac{1}{2}\pm i u-z\Big)  
Z\Big(\frac{1}{2}\pm i t+z,\frac{1}{2}\pm i u+z,\rho,\rho'\Big) \\
& \times \frac{G\big(\frac{1}{2}\pm i t+z,\frac{1}{2}\pm i u+z, 
0\big)}{G\big(\frac{1}{2}+ i t,\frac{1}{2}+ i u,0\big)} H_{t,u}(z) \frac{dz}{z}.
\end{align*}
Using the series representation \eqref{another-nice-representation} we
arrive at the result with $W_\pm(y)$ equal to 
\begin{align*} 
\sum_{\overline k\in\{0,1\}^4} \frac{1}{2\pi i} \int_{(1)}  
\gamma_{\overline k, \rho,\rho',\chi,\chi',\pm} & \Big(\frac{1}{2}\pm i t-z,\frac{1}{2}\pm i u-z\Big) 
\big(y\sqrt{C'(t,u)}\big)^{-2z} \\ 
& \times \frac{G\big(\frac{1}{2}\pm i t+z,\frac{1}{2}\pm i u+z,0\big)}{G\big(\frac{1}{2} 
+ i t,\frac{1}{2}+ i u,0\big)} H_{t,u}(z) \frac{dz}{z}. 
\end{align*}
From Stirling's formula we find that
$\Gamma(s+z)/\Gamma(s)=O((1+\abs{s})^{\Re(z)}e^{\pi\abs{z}/2})$ 
uniformly for $s, z$ in  bounded strips away from poles. It follows
that
we have 
$$ 
\frac{G\big(\frac{1}{2}\pm i t+z,\frac{1}{2}\pm i 
u+z,0\big)}{G\big(\frac{1}{2}+ i t,\frac{1}{2}+ i u,0\big)} 
=O\big(\mathfrak q(u,t)^{\Re{(z)}}e^{2\pi \abs{z}}\big). 
$$
By shifting the contour to $\sigma$ and differentiating under the integral sign we see that
$$ 
y^j\frac{\partial^jW_\pm}{\partial^j y} 
=O\left(y^{-{2\sigma}}\int_{(\sigma)}e^{(-4\pi+2\pi)\abs{z}}\frac{(1+\abs{z})^j}{\abs{z}}dz+\delta_{j=0,\sigma<0}\right) 
$$
for $-\delta\leq \sigma<(3/2-\delta)A$. The last term comes from the
pole at $z=0$. For $y\leq 1$ we can choose $\sigma=-\delta/2$, and for
$y>1$ we choose $\sigma=A$ and find the desired bound.
\end{proof}

We can now prove Theorem~\ref{convexity-independent}:

\begin{proof} 
Let $\varepsilon >0$. For $\Re(z)=1/2$ we have, assuming \eqref{lindelof-on-average}, 
\begin{equation}\label{bound-on-coeff}\sum_{\substack{c\leq Y\\
      c\textrm{ odd} }}\abs{L^{**}(z,\psi,c,\rho)}=O(Y^{1+\e}(1+\abs{z})^{a+\e})\end{equation}
with $a=0$. Unconditionally
\eqref{bound-on-coeff} holds with $a=1/2$ as is straightforward to
verify from Lemma \ref{Q-on-square-average},
Theorem~\ref{average}~\eqref{generalized-average}, and
Cauchy-Schwartz.

It follows that for an appropriate choice of $A$
 in Lemma~\ref{approximate-functional-equation} we have 
$$\sum_{c>\mathfrak q(u,t)^{1/2+\varepsilon}} \!\!\!\!\!\!\!\!\! 
\frac{\abs{L^{**}(1/2\pm
    i(t-u),\psi,c,\rho)}}{c^{1/2}}\abs{W_{\pm}\left(\frac{c}{\sqrt{\mathfrak
        q(t,u)}}\right)}\leq
C_\varepsilon ((1+\abs{t-u})^{a}\mathfrak q(u,t)^{\e}).$$
It follows also that
$$\sum_{c\leq \mathfrak q(u,t)^{1/2+\varepsilon}}  
\frac{\abs{L^{**}(1/2\pm i(t-u),\psi,c,\rho)}}{c^{1/2}}=O(\mathfrak q(u,t)^{1/4+\e}(1+\abs{t-u})^{a+\e}).$$

The claim of Theorem~\ref{convexity-independent} now follows from the approximate functional equation. 
\end{proof}

\begin{remark}We notice that for the special configuration $w=1-s$ the
  conductor drops to essentially
$$(1+\abs{t})(1+\abs{u}).$$
This configuration will be the relevant one in Theorem
\ref{cuspidalcontribution} below.
\end{remark}

\begin{remark}One could speculate whether using another
  functional equation could lead to a smaller conductor.
During the proof of Theorem \ref{convexity-independent}, or more
precisely in the proof of the approximate functional equation Lemma \ref{approximate-functional-equation}, we have
made certain choices: we have chosen a particular
functional equation $(s,w)\to(1-s,1-w)$ and a particular series
representation \eqref{another-nice-representation}. In principle, there is
nothing that prohibits  running the same type
of argument with the other series representation \eqref{therightseries}
and/or another functional equation.  

Let us consider what happens if we make other choices.
If we use 
\eqref{therightseries} and if $\Re(z)=1$ and
$\Re(s)=\Re(w)=1/2$ then the function   $Z(s+z,w,\chi,\chi')$ in
\eqref{integral-afe} is evaluated in $D_1$ where the
series representation \eqref{therightseries} is convergent. Similarly
if we consider  \eqref{another-nice-representation} and  if $\Re(z)=1$ and
$\Re(s)=\Re(w)=1/2$ then the function 
$Z(s+z,w+z,\chi,\chi')$ is evaluated in $D_2$ where the
series representation \eqref{another-nice-representation} is
convergent. In  order for the argument in Lemma \ref{approximate-functional-equation} to work we need to use a functional equation $\gamma:\C^2\to\C^2$
with the property that, when $\Re(z)=1$ and
$\Re(s)=\Re(w)=1/2$, the numbers $\gamma(s-z,w-z)$/$\gamma(s-z,w)$  lie in $D_1$ or $D_2$. Only in this case is the integrand evaluated where
the double Dirichlet series has a series representation (after
moving the line of integration to $\Re(z)=-1$, using the functional equation and
making a change of variable $z\to -z$).

When we are using \eqref{another-nice-representation}  we assume
\eqref{lindelof-on-average}. When we are using
\eqref{therightseries} we make the similar assumption for this
series, namely that 
$$\sum_{\substack{n\leq X\\(n,2)=1}}\abs{t_{n}L^*(w,n,\chi')}=O(X^{1+\e}(1+\abs{w})^\e)\quad\textrm{ for }\Re(w)=1/2.$$
With these restrictions we list  the possible {\lq analytic conductors\rq} in Table \ref{table-conductors}.
\begin{table}[ht]
\caption{Different choices of analytic conductors}\label{table-conductors} 
\centering
\begin{tabular}{|c||c||c|}\hline
  Functional equation & Series repn. &Analytic conductor\\
  \hline \hline
  
$\beta\alpha\beta:(s,w)\to(1-s,w)$ & \eqref{therightseries}
&$(1+\abs{t+u})^2(1+\abs{t-u})^2(1+\abs{t})^2$\\
$\alpha\beta\alpha:(s,w)\to(1-w,1-s)$ & \eqref{another-nice-representation} &$(1+\abs{t})(1+\abs{t+u})^2(1+\abs{u})$\\
$\alpha\beta\alpha\beta:(s,w)\to(1-s,1-w)$ & \eqref{therightseries}
&$(1+\abs{t+u})^2(1+\abs{t-u})^2(1+\abs{t})^2$\\
$\alpha\beta\alpha\beta:(s,w)\to(1-s,1-w)$ & \eqref{another-nice-representation} &$(1+\abs{t})(1+\abs{t+u})^2(1+\abs{u})$\\\hline
\end{tabular}
\end{table} 
   Since for all $t,u\in \R$
$$(1+\abs{t})(1+\abs{t+u})^2(1+\abs{u})\leq (1+\abs{t+u})^2(1+\abs{t-u})^2(1+\abs{t})^2,$$
the conductor defined in \eqref{analytic-conductor} is the smallest
among these.
    \end{remark}

\subsection{Another double Dirichlet series}
It turns out that there is another double Dirichlet series which is
relevant in the applications to QUE. We now define it and then immediately 
show that it can be understood in terms of the series
$Z(s,w,\chi,\chi')$ which was analyzed in the previous sections. Let
\begin{equation}\label{onceagainaseries} 
\hat Z(s,w,\chi,\chi')=\sum_{\substack{c=1\\(c,2)=1}} 
\frac{\chi'(c)L^*(s-w+1/2,c,\chi)^2}{c^{2w-1/2}}. 
\end{equation}
In order to understand $\hat Z(s,w,\chi,\chi')$ we exhibit an 
interesting non-trivial relation between the $q$-polynomials
and the $Q$-polynomials in the case of the Eisenstein 
series, i.e.\ for $ t_n = \tau(n) $. Let $\hat Q$ be 
defined as $Q$ but with the one exception that we use 
$\chi_{c_0}$ instead of $\tilde\chi_{c_0}$, i.e.\ with $v=v_p(c_1)$,
\begin{equation*}
\hat Q(s,c_0c_1^2,\chi) = \prod_{p\vert c_1} \frac{t_{p^{2v}}-t_{p^{2v-1}} 
\chi_{c_0}(p) \chi(p)\left(\frac{p^{1-s}+p^{s}}{p}\right)+{t_{p^{2v-2}} 
{\chi_{c_0}(p)^2}/{p}}}{p^{2v(s-1/2)}},
\end{equation*}
defined for $c_0$, $c_1$ odd.
By \eqref{quadrep-characters} we see that 
\begin{equation} \label{relation-right}
  \hat Q(s,c,\chi)=\begin{cases}Q(s,c,\chi),&\textrm{ if }c_0\equiv 1
    (4),\\Q(s,c,\chi\chi_4),&\textrm { if }c_0\equiv 3 (4).\end{cases}
\end{equation}
\begin{lemma} \label{nontrivialrelation} 
Let $d_0$ be  an odd squarefree positive integer,
$d_1$ odd, and $t_n=\tau(n)$. Then 
\begin{equation*}
\sum_{d\vert d_1}d^{1-2s} \left(q(s,d_0\frac{d_1^2}{d^2},\chi)\right)^2 
= \sum_{d\vert d_1} \sigma_{2-4s}(d) \hat Q\bigg(s,d_0\frac{d_1^2}{d^2},\chi\bigg).
  \end{equation*}
\end{lemma}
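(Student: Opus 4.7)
The plan is to reduce to a prime-by-prime local computation. First I would show that both sides of the identity, as functions of $d_1$ for fixed $d_0$, are multiplicative: the left side is the Dirichlet convolution of $d \mapsto d^{1-2s}$ with $m \mapsto q(s, d_0 m^2, \chi)^2$, and the right side is that of $d \mapsto \sigma_{2-4s}(d)$ with $m \mapsto \hat Q(s, d_0 m^2, \chi)$. Since $q(s, d_0 m^2, \chi)$ and $\hat Q(s, d_0 m^2, \chi)$ are Euler products over the primes $p \mid m$, and both $\sigma_{2-4s}$ and $d \mapsto d^{1-2s}$ are multiplicative, the usual convolution-of-multiplicatives lemma applies. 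This reduces the claim to the case $d_1 = p^v$ for a single odd prime $p$.

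For this local case I would set $T = p^{1-2s}$, $U = \chi_{d_0}(p)\chi(p) p^{-s}$, and $\delta = \chi_{d_0}(p)^2 \in \{0,1\}$, and note the key relation $U^2 = \delta T / p$. Let $q_p(s,v)$ denote the local factor of $q(s, d_0 n^2, \chi)$ at $p$ with $v_p(n) = v$, and similarly define $\hat Q_p(s,v)$. A short telescoping of the definition of $q$ gives the closed form $q_p(s,v) = a' T^v - b'$ with $a' = (T-U)/(T-1)$ and $b' = (1-U)/(T-1)$; in particular $a' - b' = 1$ and $a' - T b' = U$. Using $t_{p^k} = k+1$, one also gets an explicit expression for $\hat Q_p(s,v)$ as an affine-in-$v$ function of $T, U, \delta$.

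The cleanest way to verify the local identity $\sum_{j=0}^v T^j q_p(s, v-j)^2 = \sum_{j=0}^v \sigma_{2-4s}(p^j) \hat Q_p(s, v-j)$ is via generating functions. Set $\Phi(x) = \sum_{v \ge 0} q_p(s,v)^2 x^v$ and $\Psi(x) = \sum_{v \ge 0} \hat Q_p(s,v) x^v$. Using $\sum_{j \ge 0} T^j x^j = (1 - Tx)^{-1}$ together with $\sum_{j \ge 0} \sigma_{2-4s}(p^j) x^j = ((1-x)(1 - T^2 x))^{-1}$, the identity for all $v$ is equivalent to the single functional equation
\[
(1-x)(1-T^2 x)\,\Phi(x) = (1-Tx)\,\Psi(x).
\]
Substituting $q_p(s,v) = a' T^v - b'$ gives the partial fractions $\Phi(x) = a'^2/(1-T^2 x) - 2a'b'/(1-Tx) + b'^2/(1-x)$, and after clearing the common factor $1 - Tx$ from both sides, each side becomes a polynomial of degree $2$ in $x$ whose three coefficients must be matched.

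The expected obstacle is purely the bookkeeping rather than any conceptual difficulty. The constant term of the left side is $(a'-b')^2 = 1$ and its $x^2$-coefficient is $T(a'-Tb')^2 = TU^2 = \delta T^2/p$, both of which match the right side at sight (with $b := \delta/p$). The $x$-coefficient, after using $a' = 1+b'$ and $(T-1)b' = 1 - U$, reduces to the single elementary identity $T - 2U(1+T) + U^2 = T(1 - 2a + b)$ with $a := U + \delta/(Up)$, which is immediate from $U^2 p = \delta T$. The degenerate case $p \mid d_0$ (where $\delta = U = 0$) is subsumed in the same framework.
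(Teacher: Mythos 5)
Your proposal is correct and follows essentially the same route as the paper: reduce by multiplicativity to $d_1=p^n$ and then verify the resulting identity of geometric sums, except that the paper omits the local computation entirely ("straightforward but tedious\dots details omitted") while you actually carry it out via the closed form $q_p(s,v)=a'T^v-b'$ and the generating-function identity $(1-x)(1-T^2x)\Phi(x)=(1-Tx)\Psi(x)$; I checked that $U^2=\delta T/p$, the three coefficient comparisons, and the degenerate case $p\mid d_0$ all work out. The one point worth making explicit is that your affine-in-$v$ formula for $\hat Q_p(s,v)$ is valid only for $v\ge 1$ (the empty product at $v=0$ equals $1$, not $1-\delta/p$), and it is precisely this correction, contributing $b(1-Tx)^2$ to $(1-Tx)^2\Psi(x)$, that produces the $x^2$-coefficient $\delta T^2/p$ you match against $T(a'-Tb')^2=TU^2$ on the left.
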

\begin{proof} Since the involved arithmetical functions are
  multiplicative, it  is enough to verify the
  claim on prime powers $d_1=p^n$, i.e.\ we need to verify
$$
\sum_{i=0}^np^{2i(1/2-s)}q^2\big(s,d_0p^{2(n-i)}\big)
=\sum_{i=0}^n\sum_{j=0}^ip^{4j(1/2-s)}Q\big(s,d_0p^{2(n-i)},\chi\big). 
$$
Using the definitions of $q(s,d,\chi)$ and $Q(s,d,\chi)$ it is a
straightforward but tedious algebraic computation with sums and
products of geometric sums. The details are omitted.
\end{proof}
Using the above lemma we can now show that many properties of $\hat
Z(s,w,\chi,\chi')$ can be understood on the basis of the properties of
$Z(s,w,\chi,\chi')$. The following lemma implies in particular that
$\hat Z(s,w,\chi,\chi')$ admits a meromorphic continuation, and that any
bound we have on $Z_{\psi_\tau}(s,w,\chi,\chi')$ translates into a bound
for $\hat Z(s,w,\chi,\chi')$.

\begin{lemma}\label{ZhatZtilde} 
Assume that $\psi=\psi_\tau$, i.e.\ $t_n=\tau(n)$. Then 
\begin{align*} \hat
Z(s,w,\chi,\chi') =& \frac{1}{2\zeta_2(2s+2w-1)}
\big(Z_{\psi_\tau}(s,w,\chi,\chi')+Z_{\psi_\tau}(s,w,\chi\chi_4,\chi') \\ 
& \phantom{\frac{1}{2\zeta_2(2s+2w-1)} \big(} 
+Z_{\psi_\tau}(s,w,\chi,\chi'\chi_4)-Z_{\psi_\tau}(s,w,\chi\chi_4, 
\chi'\chi_4)\big).
\end{align*}
\end{lemma}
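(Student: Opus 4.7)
The plan is to unfold $\hat Z(s,w,\chi,\chi')$ via the substitution $c=c_0c_1^2$ with $c_0$ squarefree and then invoke Lemma~\ref{nontrivialrelation}. Since $\chi'$ is a character mod~$8$ and $c_1^2\equiv 1\pmod 8$ for $c_1$ odd, $\chi'(c_1^2)=1$, and $L_2(s-w+1/2,\chi_{c_0}\chi)^2$ depends only on $c_0$, so that
\[
\hat Z(s,w,\chi,\chi')=\sum_{c_0\text{ sqfr. odd}}\frac{\chi'(c_0)L_2(s-w+1/2,\chi_{c_0}\chi)^2}{c_0^{2w-1/2}}\sum_{c_1\text{ odd}}\frac{q(s-w+1/2,c_0c_1^2,\chi)^2}{c_1^{4w-1}}.
\]
Setting $s_0=s-w+1/2$, I would multiply the identity of Lemma~\ref{nontrivialrelation} by $d_1^{-(4w-1)}$, sum over odd $d_1$, and write $d_1=de$ on both sides to separate the Dirichlet series. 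The left-hand side gives $\zeta_2(2s+2w-1)$ times the inner $c_1$-sum; the right-hand side gives $\zeta_2(4s-1)\zeta_2(4w-1)\sum_{e\text{ odd}}e^{-(4w-1)}\hat Q(s_0,c_0e^2,\chi)$, once one evaluates the odd Dirichlet series $\sum_{d\text{ odd}}\sigma_{2-4s_0}(d)d^{-(4w-1)}=\zeta_2(4w-1)\zeta_2(4s-1)$. Reassembling $c=c_0e^2$ then yields
\[
\zeta_2(2s+2w-1)\hat Z(s,w,\chi,\chi')=\zeta_2(4s-1)\zeta_2(4w-1)\sum_{c\text{ odd}}\frac{\chi'(c)\hat Q(s_0,c,\chi)L_2(s_0,\chi_{c_0}\chi)^2}{c^{2w-1/2}}.
\]

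To finish, I split this sum according to $c_0\bmod 4$ and match each piece with a $Z_{\psi_\tau}$. By \eqref{quadrep-characters}, $L_2(s_0,\chi_{c_0}\chi)^2$ equals $L_2(s_0,\tilde\chi_{c_0}\chi)^2$ when $c_0\equiv 1(4)$ and $L_2(s_0,\tilde\chi_{c_0}\chi\chi_4)^2$ when $c_0\equiv 3(4)$; simultaneously, by \eqref{relation-right}, $\hat Q(s_0,c,\chi)$ equals $Q(s_0,c,\chi)$ or $Q(s_0,c,\chi\chi_4)$ in the two cases. Using $c\equiv c_0\pmod 4$ (since $c_1^2\equiv 1\pmod 8$) and $\mathbf{1}_{c\equiv\pm 1(4)}=\tfrac12(1\pm\chi_4(c))$ for odd $c$, the $c_0\equiv 1(4)$ contribution becomes $\tfrac12(B(\chi,\chi')+B(\chi,\chi'\chi_4))$, and the $c_0\equiv 3(4)$ contribution becomes $\tfrac12(B(\chi\chi_4,\chi')-B(\chi\chi_4,\chi'\chi_4))$, where
\[
B(\rho,\rho'):=\sum_{m\text{ odd}}\frac{\rho'(m)Q(s_0,m,\rho)L_2(s_0,\tilde\chi_{m_0}\rho)^2}{m^{2w-1/2}}.
\]
Since $\psi_\tau$ satisfies $L_2(s_0,\psi_\tau\otimes\tilde\chi_{m_0}\rho)=L_2(s_0,\tilde\chi_{m_0}\rho)^2$, the representation \eqref{full-intermediate-expression} identifies $Z_{\psi_\tau}(s,w,\rho,\rho')=\zeta_2(4s-1)\zeta_2(4w-1)B(\rho,\rho')$. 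Combining the four pieces and dividing through by $2\zeta_2(2s+2w-1)$ produces exactly the claimed formula.

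The main technical obstacle is purely bookkeeping: one must track carefully how the $\chi_4$-twist arising from the switch $\chi_{c_0}\leftrightarrow\tilde\chi_{c_0}$, together with the parallel switch in $\hat Q$, lands in the first ($\chi$) slot of $Z_{\psi_\tau}$ rather than the second ($\chi'$) slot, and how the indicator functions $\tfrac12(1\pm\chi_4(c))$ account for the minus sign on the last term. Once the character bookkeeping and the application of Lemma~\ref{nontrivialrelation} are done correctly, the identification is a routine Dirichlet-series manipulation.
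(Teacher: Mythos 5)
Your proposal is correct and follows essentially the same route as the paper: both proofs unfold over $c=c_0c_1^2$, apply Lemma~\ref{nontrivialrelation} to convert the $q^2$-sum into a $\hat Q$-sum (producing the factor $\zeta_2(2s+2w-1)$ on one side and $\zeta_2(4s-1)\zeta_2(4w-1)$ on the other), and then split according to $c_0\bmod 4$ via \eqref{quadrep-characters} and \eqref{relation-right} to match the four $Z_{\psi_\tau}$ terms through \eqref{full-intermediate-expression}. The only cosmetic difference is that you work directly in the target variables $(s_0,2w-1/2)=(s-w+1/2,2w-1/2)$, whereas the paper carries out the manipulation in auxiliary variables and substitutes at the end; the character bookkeeping and signs come out identically.
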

\begin{proof}
  We start by noticing that 
$L_2(s,\chi_{c_0}\chi)^2=L_2(s,\psi_\tau\otimes\chi_{c_0}\chi)$.
 Let now $d_0$
be an odd squarefree natural number. Then 
\begin{align*}
\zeta_2(2s+2w-1)\sum_{\substack{d_1=1\\d_1 \textrm{odd}}}^\infty 
\frac{q^2(s,d_0d_1^2,\chi)}{d_1^{2w}} 
&= \sum_{\substack{d, d_1=1\\d_1, \, d \textrm{ odd}}}^\infty 
\frac{d^{1-2s}q^2(s,d_0d_1^2,\chi)}{(dd_1)^{2w}}\\
&=\sum_{\substack{d_1=1\\d_1 \textrm{ odd}}}^\infty 
\frac{\sum_{d\vert d_1}d^{1-2s}q^2(s,d_0\frac{d_1^2}{d^2},\chi)}{d_1^{2w}}.
\end{align*}
We then use Lemma \ref{nontrivialrelation} and arrive at
\begin{align*}
  \sum_{\substack{
      d_1=1\\d_1 \textrm{
        odd}}}^\infty&\frac{\sum_{d\vert d_1}\sigma_{2-4s}(d)\hat
    Q(s,d_0\frac{d_1^2}{d^2},\chi)}{d_1^{2w}}=\sum_{\substack{l=1 \\l \textrm{
        odd}}}^\infty\frac{\sigma_{2-4s}(l)}{l^{2w}}\sum_{\substack{
      d_1=1\\d_1 \textrm{
        odd}}}^\infty\frac{\hat
    Q(s,d_0d_1^2,\chi)}{d_1^{2w}}\\
&=\zeta_2(4s+2w-2)\zeta_2(2w)\sum_{\substack{
      d_1=1\\d_1 \textrm{
        odd}}}^\infty\frac{\hat
    Q(s,d_0d_1^2,\chi)}{d_1^{2w}}.
\end{align*}
Multiply the first and last expression by
$\frac{\chi'(d_0)L_2(s,\chi_{d_0}\chi)^2}{d_0^w}$ and summing over all odd
squarefree natural numbers $d_0$ we get 
\begin{align*}
  \zeta_2(2s&+2w-1)\sum_{\substack{d=1\\(d,2)=1}}^\infty\frac{\chi'(d)q^2(s,d,\chi)L_2(s,\chi_{d_0}\chi)^2}{d^{w}}\\
&=\zeta_2(4s+2w-2)\zeta_2(2w)\sum_{\substack{
      d=1\\(d,2)=1}}^\infty\frac{\chi'(d)\hat
    Q(s,d,\chi)L_2(s,\psi_\tau\otimes\chi_{d_0}\chi)}{d^{w}}.
\end{align*}
By \eqref{quadrep-characters}  we see that 
$$\hat
    Q(s,d,\chi)L_2(s,\psi_\tau\otimes\chi_{d_0}\chi)= 
\begin{cases}
  Q_{\psi_\tau}(s,d,\chi)L_2(s,\psi_\tau\otimes\chi_{d_0}\chi),&\textrm{
  if }d\equiv 1 (4),\\
Q_{\psi_\tau}(s,d,\chi\chi_4)L_2(s,\psi_\tau\otimes\chi_{d_0}\chi\chi_4),&\textrm{
  if }d\equiv 3 (4).
\end{cases}$$
Substituting $(s-w+1/2,2w-1/2)$ for $(s, w)$ and comparing with 
\eqref{full-intermediate-expression} we obtain the desired result.
\end{proof}

\section{Eisenstein series}\label{Sec:weight-half-eisensteinseries}
We briefly recall a few facts about Eisenstein series with weights.  
For $\gamma\in \hbox{SL}_2(\R)$ and $z\in \H$ we define $j(\gamma,z)=cz+d$ 
and $j_\gamma(z)=\frac{cz+d}{\abs{cz+d}}$. We let $\arg$ denote the
principal argument and define $j_\gamma(z)^k=e^{ik\arg(cz+d)}$. Since
$j(\gamma_1\gamma_2,z)=j(\gamma_1,\gamma_2z)j(\gamma_2,z)$ 
$$ 
\tilde \omega(\gamma_1,\gamma_2) = 
\frac{1}{2\pi}\left(\hbox{arg}j(\gamma_1,\gamma_2z) 
+ \hbox{arg}j(\gamma_2,z)-\hbox{arg}j(\gamma_1\gamma_2,z)\right) 
$$
is an integer independent of $z$.
The factor system of weight $k\in \R$ is then defined as 
$$ 
\omega(\gamma_1,\gamma_2)=e(k\tilde\omega(\gamma_1,\gamma_2)). 
$$ 
Then we have $ \omega(\gamma_1,\gamma_2)j_{\gamma_1\gamma_2}(z)^{k} 
=j_{\gamma_1}(\gamma_2z)^{k}j_{\gamma_2}(z)^{k}$. We refer to 
\cite[Ch.~2.6, Ch.~3]{Iwaniec:1997a} for the basic properties 
of multiplier systems, as well as for further explanations of 
the generalities of Fourier expansions.

Let $\nu$ be a weight $k$ multiplier system, and let $\G$ be a cofinite subgroup 
of ${\hbox{SL}_2( {\mathbb R})} $. 
For an open cusp $\mathfrak a$, i.e.\ $\nu(\mathfrak a)=1$, we 
define the weight $k$ Eisenstein series for $\G$ by
\begin{equation*}
E_{\mathfrak a}(z,s,k) := \sum_{\gamma\in\Gamma_\mathfrak{a}\backslash\Gamma} 
\overline{\nu(\g)\omega(\sigma_{\mathfrak a}^{-1},\gamma)} j_{\sigma_{\mathfrak a}^{-1}\g}(z)^{-k} 
\Im(\sigma_{\mathfrak a}^{-1} \g z)^s \quad  \textrm { for $\Re(s)>1$,}
\end{equation*}
where $\sigma_{\mathfrak a}$ is a scaling matrix of the cusp 
$\mathfrak a$, i.e.\ $\sigma_{\mathfrak a}^{-1} \Gamma_{\mathfrak a} 
\sigma_{\mathfrak a} = \Gamma_\infty $, $ \Gamma_\infty $ 
being generated by $ \gamma_\infty = \left(\begin{smallmatrix} 
1 & 1 \\ 0 & 1 
\end{smallmatrix}\right) $ and $ - \gamma_\infty $ if $ -I \in \Gamma $. 
This function satisfies $E_{\mathfrak a}(\g z,s,k) = \nu(\g) j_\g^{k}(z) 
E_{\mathfrak a}(z,s,k)$ for $ \gamma \in \G $, is an eigenfunction of the 
weight $k$ Laplacian with eigenvalue $s(1-s)$, and admits a meromorphic
continuation to $s\in\C$. We now briefly recall how to find the Fourier 
coefficients of $ E_{\mathfrak a}(z,s,k)$ at an open cusp $\mathfrak b$. 
We have 
$$ 
j_{\sigma_{\mathfrak b}}(z)^{-k}E_{\mathfrak a}(\sigma_{\mathfrak b} z,s,k) 
= \sum_{\gamma\in\Gamma_\infty\backslash\sigma_{\mathfrak a}^{-1}\Gamma\sigma_{\mathfrak b}} 
\overline{\nu_{{\mathfrak a}{\mathfrak b}}(\gamma)}j_{\gamma}(z)^{-k}\Im(\gamma z)^s, 
$$ 
where
$\nu_{{\mathfrak a}{\mathfrak b}}(\gamma) = \nu(\sigma_{\mathfrak a} \gamma 
\sigma_{\mathfrak b}^{-1}) \omega(\sigma_{\mathfrak a}^{-1}, \sigma_{\mathfrak a} 
\gamma \sigma_{\mathfrak b}^{-1}) \omega(\gamma\sigma_{\mathfrak b}^{-1}, 
\sigma_{\mathfrak b})$. For the rest of the paper we can assume that $-I\in\Gamma$.
Summing over a set of representatives of
$\G_\infty\backslash\sigma_{\mathfrak a}^{-1} \Gamma\sigma_{\mathfrak b}/\G_\infty$,
which we can assume have $c_\g>0$ for $\g\not \in \Gamma_\infty$, we see that 
$$
j_{\sigma_{\mathfrak b}}(z)^{-k}E_{\mathfrak a}(\sigma_{\mathfrak b} z,s,k) 
= \delta_{{\mathfrak a}={\mathfrak b}}y^s 
+ \sum_{I\neq \gamma\in\G_\infty\backslash\sigma_{\mathfrak a}^{-1}\G\sigma_{\mathfrak b} /\G_\infty} 
\overline{\nu_{{\mathfrak a}{\mathfrak b}}(\g)}\sum_{l\in \Z}j_{\g\g_\infty^l}(z)^{-k} 
\Im(\g\gamma^l_\infty z)^s. 
$$
Therefore, by a familiar computation, we have
\allowdisplaybreaks\begin{align*} 
\int_0^1(&j_{\sigma_{\mathfrak b}}(z)^{-k}E_{\mathfrak a}(\sigma_{\mathfrak b}z,s,k) 
- \delta_{{\mathfrak a}={\mathfrak b}}y^s)e(-nx)dx\\
&=\sum_{I\neq \gamma\in\G_\infty\backslash\sigma_{\mathfrak a}^{-1}\Gamma\sigma_{\mathfrak b}/\G_\infty} 
\frac{\overline{\nu_{{\mathfrak a}{\mathfrak b}}(\g)}}{c^{2s}}e\left(n\frac{d}{c}\right) 
y^s\int_{-\infty}^\infty\left(\frac{z}{\abs{z}}\right)^{-k}\frac{1}{\abs{z}^{2s}}e(-nx)dx.
\end{align*}
Substituting $t=x/y$ in the last integral we see that 
\begin{align*}y^s\int_{-\infty}^\infty
\left(\frac{z}{\abs{z}}\right)^{-k}&\frac{e(-nx)}{\abs{z}^{2s}}dx
=y^{1-s}\int_{-\infty}^\infty \left(\frac{t+i}{\abs{t+i}}\right)^{-k} 
\frac{e(-nty)}{\abs{t+i}^{2s}}dt\\
&=e^{-ik\pi/2}y^{1-s}\int_{-\infty}^\infty
\left(\frac{1-it}{\abs{1-it}}\right)^{-k}\frac{e(-nty)}{\abs{1+it}^{2s}}dt\\
&=
\begin{cases}
\pi^se^{-ik\pi/2}\frac{\abs{n}^{s-1}}{\Gamma(s+\frac{kn}{2\abs{n}})} 
W_{\frac{kn}{2\abs{n}},s-1/2}(4\pi \abs{n}y),&\textrm{ if }n\neq 0,\\
\pi 4^{1-s}e^{-ik\pi/2}\frac{\G(2s-1)}{\G(s+k/2)\G(s-k/2)} y^{1-s},&\textrm{ if }n=0, 
\end{cases}
\end{align*}
where $W_{\mu,\nu}(y)$ is the Whittaker function, and where we have used 
\cite[3.384 (9), p.~349]{GradshteynRyzhik:2007a} for $n\neq 0$ and
\cite[p.~84--85]{Shimura:1975a} for $n=0$.

\subsection{Eisenstein series of level 4}
We now specialize to $\Gamma=\Gamma_0(4)$. In this case the Fourier 
coefficients of half-integral weight
Eisenstein series were originally studied by Shimura \cite{Shimura:1975a}.
We consider
the weight $1/2$ multiplier system
$\nu$ related to the theta series 
\begin{equation*}
  \theta(z):=y^{1/4}\sum_{m\in\Z}e(m^2z), 
\end{equation*} i.e.\ $\theta(\g z)=\nu(\g)j_\g(z)^{1/2}\theta(z)$ 
for $\g\in\G$. It is well known
that 
$$ 
\nu\left(\g\right)=\jacobi{c}{d}\varepsilon_d^{-1} \textrm{ for } 
\begin{pmatrix}a & b\\c& d\end{pmatrix}=\gamma\in\Gamma_0(4). 
$$
Here the Jacobi-Legendre symbol is extended as in \cite[p.~442]{Shimura:1973a}.
The group $\Gamma_0(4)$ has 3 cusps $\mathfrak{a}_1=\infty$,
$\mathfrak{a}_2=0$, $\mathfrak{a}_3=1/2$, with corresponding 
stabilizers $\Gamma_{\mathfrak{a}_i}$ generated by $ \pm
\gamma_{\mathfrak{a}_i}$ where 
$$
\gamma_{{\mathfrak{a}_1}}=\begin{pmatrix}1&1\\0&1\end{pmatrix},\quad
\gamma_{\mathfrak{a}_2}=\begin{pmatrix}1&0\\-4&1\end{pmatrix},\quad
\gamma_{\mathfrak{a}_3}=\begin{pmatrix}-1&1\\-4&3\end{pmatrix} 
$$
and we define scaling matrices 
$$ 
\sigma_{\mathfrak{a}_1}=\begin{pmatrix}1&0\\0&1\end{pmatrix}, \quad
\sigma_{\mathfrak{a}_2}=\begin{pmatrix}0&-1/2\\2&0\end{pmatrix}, \quad 
\sigma_{\mathfrak{a}_3}=\begin{pmatrix}1&-1/2\\2&0\end{pmatrix}. 
$$
Only the cusps $\infty$ and $0$ are open with respect to 
$\nu$ as 
$$ 
\nu(\gamma_{\mathfrak{a}_1})=\nu(\gamma_{\mathfrak{a}_2})=1, \quad 
\nu(\gamma_{\mathfrak{a}_3})=\jacobi{-4}{3}\epsilon^{-1}_3=i. 
$$

We now compute the Fourier expansion for the weight $1/2$ Eisenstein
series. We focus on the cusp at infinity but the analysis for the 
other cusps is similar, although slightly more technical. The main 
extra complication at the other cusps comes from the factor system. 
This can be dealt with as follows: For $k=1/2$ we can use 
$z=\gamma_{2}^{-1}i$ in the definition of the factor system to see that 
$$ 
\omega(\gamma_1,\gamma_2)= 
\begin{cases} 
1,&\textrm{if }
-\pi< \hbox{arg}(c_{\gamma_1}i+d_{\gamma_1})+\hbox{arg}(c_{\gamma_2}i+a_{\gamma_2}) 
\leq \pi,\\-1,&\textrm{otherwise.} 
\end{cases} 
$$ 
Using the properties of a multiplier system one finds (see
\cite[(3.5)]{Iwaniec:1997a}) that 
$$
\nu_{\mathfrak a \mathfrak b}(\gamma)=
\nu(\sigma_{\mathfrak{a}}\gamma\sigma_{\mathfrak{b}}^{-1})
\frac{\omega(\sigma_{\mathfrak{a}}\gamma\sigma_{\mathfrak{b}}^{-1}, 
\sigma_{\mathfrak{b}})}{\omega(\sigma_{\mathfrak{a}},\gamma)}. 
$$
This is explicit enough that one can do the computations also 
for the other cusps.

We now focus on 
$({\mathfrak{a}_1},{\mathfrak{a}_1})=(\infty,\infty)$, and omit 
the corresponding subscripts. Using that all the non-identity
elements of $\G_\infty\backslash\G/\G_\infty$ are parametrized by  
$\begin{pmatrix}
   *&*\\4c&d
 \end{pmatrix}$ with $c>0$,  $d$ mod~$4c$,
 $(d,4c)=1$, we find that 
$$E(z,s,1/2)=y^s+\phi(s,1/2)y^{1-s}+\sum_{n\neq
0}\phi_n(s,1/2)W_{\frac{n}{\abs{n}}\frac{1}{4},s-1/2}(4\pi \abs{n}y)e(nx)$$
with 
\begin{align}\label{fourier-vanilla} 
\phi_n(s,1/2) &= \frac{\pi^se^{-i\pi/4}\abs{n}^{s-1}}{\Gamma(s+\frac{n}{4\abs{n}})} 
\sum_{c=1}^\infty\frac{1}{(4c)^{2s}}\sum_{\substack{d\!\!\!\mod {4c}\\
 (d,4c)=1}}{\overline{\nu\begin{pmatrix}*&*\\4c&d\end{pmatrix}}e(nd/4c)}\\
\nonumber&=\frac{\pi^se^{-i\pi/4}\abs{n}^{s-1}}{\Gamma(s+\frac{n}{4\abs{n}})} 
\sum_{c=1}^\infty\frac{1}{(4c)^{2s}} \sum_{\substack{d\!\!\!\mod 4c}}{\varepsilon_d 
\jacobi{4c}{d}e(nd/4c)},
\end{align}
and
\begin{align*}
\phi(s,1/2)= \frac{\pi 4^{1-s}e^{-i\pi/4}\G(2s-1)}{\G(s+1/4)\G(s-1/4)} 
\sum_{c=1}^\infty\frac{1}{(4c)^{2s}}\sum_{\substack{d\!\!\!\mod 4c}}{\varepsilon_d 
\jacobi{4c}{d}}.
\end{align*}
If we write $4c=2^kc'$ with $c'$ odd
then Sturm
proved \cite[Lemma 1]{Sturm:1980} -- using quadratic reciprocity and
the Chinese remainder theorem --
that 
\begin{equation}\label{sturms-identity}\sum_{\substack{d\!\!\mod
    4c}}{\varepsilon_d\jacobi{4c}{d}e(nd/4c)}=H_n(c')\sum_{r
\!\!\!\mod 2^k}\jacobi{2^k}{r}\varepsilon_re(nr/2^k).\end{equation}
It follows that for $ n \not= 0 $ 
$$
\phi_n(s,1/2) = \frac{\pi^se^{-i\pi/4}\abs{n}^{s-1}}{\Gamma(s+\frac{n}{4\abs{n}})} 
\sum_{\substack{c'=1\\(c',2)=1}}^\infty \frac{H_n(c')}{c'^{2s}} \sum_{k=2}^\infty 
\frac{\sum_{r\!\!\!\mod 2^k}\jacobi{2^k}{r}\varepsilon_re(nr/2^k)}{2^{2ks}}, 
$$
which by Lemma \ref{goodsplitting} equals 
\begin{equation} \label{good-expression}
\frac{\pi^se^{-i\pi/4}\abs{n}^{s-1}}{\Gamma(s+\frac{n}{4\abs{n}})} 
\frac{L^*(2s-1/2,n,1)}{\zeta_2(4s-1)} r_2(s,n),
\end{equation}
where we have written 
\begin{equation} \label{r2neu} 
r_2(s,n) := \sum_{k=2}^\infty\frac{\sum_{r \!\!\!\mod 2^{k}} \jacobi{2^{k}}{r} 
\varepsilon_re(nr/2^{k})}{2^{k2s}}. 
\end{equation} 
The function $r_2(s,n)$ can also be computed. One uses that
$\varepsilon_d$ can be expressed as a sum of characters mod~$4$ as  
$$\varepsilon_d=\frac{1+i}{2}\chi_4^0(d)+\frac{1-i}{2}\chi_4(d).$$
Inserting this in \eqref{r2neu} the numerator becomes 
\begin{equation}\label{bad-primes-splitting} 
\frac{1+i}{2}G_n(\chi_8^k\chi_{2^k}^0)+\frac{1-i}{2}G_n(\chi_8^k\chi_4\chi_{2^k}^0), 
\end{equation}
where $\chi_8$ is the primitive character mod~$8$ given by
$\chi_8(n)=(-1)^{\frac{1}{8}(n-1)(n+1)}$ for $(n,2)=1$, and $G_n$ denotes
the usual Gauss sum. Using \cite[Lemma 3]{Shimura:1975a} as well as
explicit computations of $G_1(\chi_1)$, $G_1(\chi_8)$, $G_1(\chi_4)$,
$G_1(\chi_4\chi_8)$  these can
all be computed and using the result one can compute $r_2(s,n)$. We omit the
details but state the result: Assume first $n\not \equiv  0 (4)$. Then
\begin{equation}\label{isuseful}
  r_2(s,n)=\frac{1+i}{4}\begin{cases}-\frac{1}{2^{2(2s-1)}},&n\not
    \equiv 1 (4),\\
\frac{1}{2^{2(2s-1)}}+\frac{\chi_8(n)\sqrt{2}}{2^{3(2s-1)}}, &n
    \equiv 1 (4).\\
\end{cases}
\end{equation}
More generally we find that if $n=4^rn_0$ where $n_0\not \equiv 0
(4)$, then 
\begin{equation}\label{isalsouseful}
r_2(s,n)=\frac{(1+i)}{4}u_r(2^{-(2s-1)})+4^{-r(2s-1)}r_2(s,n_0),
\end{equation}
where
\begin{equation}\label{def-u}u_r(x)=\frac{(x^2)^{r+1}-x^2}{x^2-1}.\end{equation}
We remark that $r_2(s, n)$ is entire.

\subsubsection{Scattering term}
We now compute the scattering term $\phi(s,1/2)$, which by 
\eqref{sturms-identity} equals
$$ 
\frac{\pi 4^{1-s}e^{-i\pi/4}\G(2s-1)}{\G(s+1/4)\G(s-1/4)}  
\sum_{\substack{c'=1\\(c',2)=1}}^\infty \frac{H_0(c')}{c'^{2s}} \sum_{k=2}^\infty 
\frac{\sum_{r\!\!\!\mod 2^k}\jacobi{2^k}{r}\varepsilon_r}{2^{2ks}}.
$$
The sum
$\displaystyle\sum_{\substack{c'=1\\(c',2)=1}}^\infty\frac{H_0(c')}{c'^{2s}}$
factors, and for an odd prime $p$ we observe that $$H_0(p^\beta)=\begin{cases} 
\varphi(p^\beta),& \textrm{ if }\beta \equiv 0 (2),\\0,&\textrm{
  otherwise.}\end{cases}$$ Here $\varphi$ is Euler's $\varphi$-function. Therefore 
\begin{align*} 
\sum_{\beta=0}^\infty & \frac{H_0(p^\beta)}{p^{\beta 2s}} 
=\sum_{\beta=0}^\infty \frac{\varphi(p^{2\beta})}{p^{2\beta 2s}} 
=\frac{\zeta^{(p)}(4s-2)}{\zeta^{(p)}(4s-1)}. 
\end{align*}
For the prime $2$ we note that for $k\geq 2$ we have
\begin{equation*}G_0(\chi_4\chi_{2^{k}}^0)=G_0(\chi_8\chi_{2^{k+1}}^0)=G_0(\chi_4\chi_8\chi_{2^{k+1}}^0)=0\end{equation*}
Using this we find, 
\begin{align*}\sum_{k=2}^\infty&\frac{\sum_{r
\!\!\!\mod 2^{k}}\jacobi{2^{k}}{r}\varepsilon_r}{2^{k2s}}\\ 
&=\sum_{\substack{k=2\\ k \equiv 0 (2)}}^\infty\frac{\frac{1+i}{2}G_0(\chi_4^0\chi_{2^k}^0)+\frac{1-i}{2}G_0(\chi_4\chi_{2^k}^0)}{2^{k2s}}+\sum_{\substack{k=2\\
    k \equiv 1 (2)}}^\infty\frac{\frac{1+i}{2}G_0(\chi_8\chi_{2^k}^0)+\frac{1-i}{2}G_0(\chi_8\chi_4\chi_{2^k}^0)}{2^{k2s}}\\
&=\sum_{\substack{k=2\\ k \equiv 0
    (2)}}^\infty\frac{\frac{1+i}{2}\varphi(2^k)}{2^{k2s}}={(1+i)}\frac{2^{-4s}}{1-2^{-(4s-2)}}.
\end{align*}
It follows that 
$$\phi(s,1/2)=\pi 4^{1-s}e^{-i\pi/4}\frac{\G(2s-1)
}{\G(s+1/4)\G(s-1/4)}\frac{(1+i)}{2^{4s}}\frac{\zeta(4s-2)}{\zeta_2(4s-1)}.$$
Using that $\G(s+1/4)\G(s-1/4)=\sqrt{\pi}2^{3/2-2s}\Gamma(2s-1/2)$
this simplifies to\begin{equation}\label{weight-1/2-scattering}
\frac{1}{2^{4s-1}-1}\frac{\xi(4s-2)}{\xi(4s-1)}
\end{equation}
where $\xi(s)=\pi^{-s/2}\Gamma(s/2)\zeta(s)$ (compare
\cite[p.~247-248]{Iwaniec:1997a}). The other entries in the scattering
matrix $\Phi(s,1/2)$ can be computed in a similar way and we find
\begin{equation} \label{scattering-explicit} 
  \Phi(s,1/2)=\begin{pmatrix}\frac{2^{-(4s-1)}}{1-2^{-(4s-2)}}&\frac{1-i}{2^{2s}}\\
 \frac{1+i}{2^{2s}} & \frac{2^{-(4s-1)}}{1-2^{-(4s-2)}}\end{pmatrix}\frac{1-2^{-(4s-2)}}{1-2^{-(4s-1)}}\frac{\xi(4s-2)}{\xi(4s-1)}.
\end{equation}
As a consistency check we note that a direct computation and the
functional equation for $\xi$ show that the scattering matrix 
verifies $\Phi(s,1/2)\Phi(1-s,1/2)=I$ as predicted by the general theory.

\subsection{Eisenstein series of level $2^n$.}
We now consider the group $\Gamma_0(N)$, where $N=2^n$ with $n\geq 2$. Let $\chi$
be a Dirichlet character modulo $N$, and consider the weight 1/2 multiplier
system
$$\nu(\gamma)=\chi(d)\jacobi{c}{d}\varepsilon_d^{-1} \textrm{ for } 
\begin{pmatrix}a & b\\c& d\end{pmatrix}=\gamma\in\Gamma_0(N). $$
We consider the corresponding Eisenstein series of weight 1/2 at the
cusp at 0, denoted by $$E_{0,\chi}(z,s,1/2).$$
Similarly one denotes $E_{\infty, \chi}(z, s, 1/2)$  the  corresponding Eisenstein series at the cusp $\infty$.
 The Fourier coefficients at infinity of the Eisenstein series at zero has a simpler 2-factor than the
Eisenstein series at infinity. The stabilizer at 0 is generated by
$\pm\gamma_0$ and has corresponding scaling matrix $\sigma_{0}$ where
\begin{equation*}
\gamma_0=\begin{pmatrix}1&0\\-2^n&1\end{pmatrix}, \quad \sigma_{0}=\begin{pmatrix}0&-1/\sqrt{2^{n}}\\\sqrt{2^{n}}&0\end{pmatrix}.
\end{equation*}
From the general considerations in the beginning of section \ref{Sec:weight-half-eisensteinseries} we find that the non-zero Fourier coefficients at infinity
equal
\begin{align*}
  \sum_{I\neq \gamma\in\G_\infty\backslash\sigma_{
      0}^{-1}\Gamma_0(N)/\G_\infty}
  \frac{\overline{\nu_{{ 0}{\infty}}(\g)}}{c^{2s}}e\left(n\frac{d}{c}\right)\pi^se^{-i\pi/4}\frac{\abs{n}^{s-1}}{\Gamma(s+\frac{n}{4\abs{n}})} 
W_{\frac{n}{4\abs{n}},s-1/2}(4\pi \abs{n}y).
\end{align*}
After some computations one finds 
\begin{align}
 \nonumber \sum_{I\neq \gamma\in\G_\infty\backslash\sigma_{
      0}^{-1}\Gamma_0(N)/\G_\infty}
  \frac{\overline{\nu_{{
          0}{\infty}}(\g)}}{c^{2s}}e\left(n\frac{d}{c}\right)&=\frac{i\chi(-1)}{N^{s}}\sum_{\substack{a=1\\(a,2)=1}}^\infty\frac{\chi(a)H_n(a)}{a^{2s}}\\
\label{willitend}&
=\frac{i\chi(-1)}{N^{s}}\frac{L^*(2s-1/2,n,\chi)}{\zeta_2(4s-1)},
\end{align}
where in the last equality we have used Lemma \ref{goodsplitting}.
Using this it is straightforward to see how $Z(s,w,\chi,\chi')$
relates directly to a Rankin-Selberg integral in the case where
$\{t_n\}$ comes from a cusp form. Let $\psi$ be a cuspidal Hecke newform of
weight zero, and trivial multiplier for
$\Gamma_0(2^k)$ with eigenvalue $s_0(1-s_0)$ 
and Fourier expansion
\begin{equation} \label{Hecke-Maass}
\psi (z)=\sum_{n\neq0} b_nW_{0,s_0-1/2}(4\pi \abs{n}y)e(nx). 
\end{equation} 
Let $\chi$ be a Dirichlet character mod 8. Consider the
twisted Maa\ss{} form
$$\psi\otimes \chi (z)=\sum_{n\neq0} \chi(n)b_nW_{0,s_0-1/2}(4\pi
\abs{n}y)e(nx),$$ which is a weight zero cusp form for some
$\Gamma_0(M)$ and character $\chi_0^M$  for some $ M\vert \hbox{lcm}(64,2^k)$ and $8\vert M$. Let $\chi'$ be another Dirichlet character mod 8. Consider now the Rankin-Selberg integral
\begin{align*}
I(\psi,\chi,\chi',s,w)=  \int_{\Gamma_0(M)\backslash \H}\psi\otimes
  \chi(z)E_{0,\chi_0^M\chi'}(z,w,1/2)\overline{E_{\infty,\chi_0^M\chi'}(z,\overline
    s,1/2)}d\mu(z).
\end{align*}
This is the integral studied by Friedberg and Hoffstein (See
\cite[(1.2), p.~388]{FriedbergHoffstein:1995a}).

Unfolding, using
$b_n=b_{\frac{n}{\abs{n}}}{\abs{n}}^{-1/2}t_{\abs{n}}$,
\eqref{willitend}, and $L^*(s,-n,\chi)=L^*(s,n,\chi_4\chi)$ we arrive at
\begin{align}
 \nonumber I(\psi,\chi,\chi',s,w)&=\frac{\pi^we^{-i\pi/4}i\chi'(-1)}{(2\pi)^{s-1}M^{w}\zeta_2(4w-1)}\sum_{\substack{n\neq
  0\\ (n,2)=1}}\frac{\chi(n)b_{\frac{n}{\abs{n}}}t_{\abs{n}}L^*(2w-1/2,n,\chi')}{\abs{n}^{s-w+1/2}}G_{\frac{n}{\abs{n}}}(w)
  \\ \label{friedberghoffstein}&=\frac{\pi^we^{-i\pi/4}i\chi'(-1)}{(2\pi)^{s-1}M^{w}\zeta_2(4w-1)\zeta_2(4s-1)}\\\nonumber
  &\quad\quad\times
\left[Z(s,w,\chi,\chi')G_+(w)+\chi(-1)b_{-1}Z(s,w,\chi,\chi_4\chi')G_-(w)\right],     
\end{align}
where
\begin{equation*}
  G_\pm(w)=\frac{1}{\Gamma(w\pm \frac{1}{4})}\int_{0}^\infty W_{\pm 1/4,w-1/2}(2y)  W_{0,s_0-1/2}(2y) y^{w-1}\frac{dy}{y}.
\end{equation*}
\begin{lemma}\label{bound-on-integrals}
$$  I(\psi,\chi,\chi',1/2+it,1/2+iu)=O(\log((2+\abs{t})(2+\abs{u})))$$
\end{lemma}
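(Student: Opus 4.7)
The plan is to bound $I(\psi,\chi,\chi',1/2+it,1/2+iu)$ by estimating the integrand directly on a fundamental domain, exploiting the rapid decay of the cusp form $\psi\otimes\chi$ together with pointwise estimates for the weight $1/2$ Eisenstein series on the critical line. The unfolding identity \eqref{friedberghoffstein} expresses $I$ in terms of $Z(s,w,\chi,\chi')$, but each factor admits only polynomial control in $(t,u)$, so a direct geometric argument is better suited for producing a logarithmic estimate.

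First I would fix a fundamental domain $F$ for $\Gamma_0(M)\backslash\mathbb{H}$ and decompose it as $F = F_0 \cup \bigcup_{\mathfrak{c}} F_{\mathfrak{c}}(Y)$, where $F_{\mathfrak{c}}(Y) = \sigma_{\mathfrak{c}}(\{\xi + i\eta : 0 \leq \xi < 1,\ \eta > Y\})$ is a cuspidal neighborhood attached to each cusp $\mathfrak{c}$ of $\Gamma_0(M)$ for some fixed $Y$ large enough that these regions are pairwise disjoint, and $F_0$ is the compact complement. On $F_0$ I would apply the uniform pointwise bound $|E_{\mathfrak{a}}(z,1/2+it,1/2)| \ll \log(2+|t|)$ for $z \in F_0$. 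This is the weight $1/2$ analogue of the classical bound for weight $0$ Eisenstein series (see e.g.~\cite{Iwaniec:1997a}), and it can be extracted from the Fourier-Whittaker expansion \eqref{good-expression} by combining the convexity estimate for the Dirichlet $L$-functions $L(2s-1/2, \chi_{n_0}\chi')$ on the critical line, the uniform bound from Proposition~\ref{correction-polynomials}~\eqref{prop3}, the explicit formulas \eqref{isuseful}--\eqref{isalsouseful} for $r_2(s,n)$, and the exponential decay of the Whittaker function $W_{n/(4|n|),s-1/2}(4\pi|n|y)$ in $|n|$. Since $\psi\otimes\chi$ is bounded on $F$, the contribution of $F_0$ is controlled logarithmically in $|t|$ and $|u|$.

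Next, on each cuspidal neighborhood $F_{\mathfrak{c}}(Y)$ I would substitute the Fourier expansions of $\psi\otimes\chi$, $E_0(z,w,1/2)$, and $E_\infty(z,\bar s,1/2)$ at the cusp $\mathfrak{c}$, and carry out the $\xi$-integration first. Since $\psi\otimes\chi$ is a cusp form, its constant Fourier coefficient at every cusp vanishes; consequently the otherwise potentially divergent pairing of the two constant terms of $E_0$ and $E_\infty$ (which are $O(\eta^{1/2})$ on the critical line by unitarity of the scattering matrix \eqref{scattering-explicit}) is killed. Every surviving summand contains at least one Whittaker factor $W_{0,s_0-1/2}(4\pi|n|\eta)$ with $n\neq 0$ coming from $\psi\otimes\chi$, which provides exponential decay in $\eta$; the resulting $\eta$-integrals converge absolutely and contribute $O(1)$ uniformly in $(t,u)$.

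The main obstacle is establishing the pointwise logarithmic bound $|E_{\mathfrak{a}}(z,1/2+it,1/2)| \ll \log(2+|t|)$ uniformly on compacta in the weight $1/2$ setting, where the Fourier coefficients \eqref{good-expression} involve the non-standard functions $L^*(2s-1/2,n,\chi')$ and the $2$-adic factor $r_2(s,n)$ instead of the classical Dirichlet $L$-series; this requires some adaptation of the argument used in the weight $0$ case. Once the log bound is in place, summing the contributions from $F_0$ and from the cuspidal neighborhoods yields the stated estimate $|I| \ll \log((1+|t|)(1+|u|))$.
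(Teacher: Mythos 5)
Your strategy has a genuine gap that cannot be repaired with current technology: the pointwise bound $\abs{E_{\mathfrak a}(z,1/2+it,1/2)}\ll\log(2+\abs{t})$ uniformly on compacta, which you yourself flag as ``the main obstacle'', is not a matter of adapting the weight $0$ argument --- no such logarithmic sup-norm bound is known even for the weight $0$ Eisenstein series on $\mathrm{SL}_2(\Z)\backslash\H$, where the best unconditional bounds on a fixed compact set are polynomial in $\abs{t}$ (convexity for $\zeta$ in the Fourier expansion gives roughly $\abs{t}^{1/2+\varepsilon}$, and even subconvexity inputs only lower the exponent). A pointwise $\log\abs{t}$ bound is essentially of Lindel\"of strength; in particular it would immediately give the restricted $L^2$ asymptotics \eqref{againagain}, whereas the whole point of the paper is that even that much weaker statement requires subconvexity for $Z(s,w,\chi,\chi')$. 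In the weight $1/2$ case the coefficients \eqref{good-expression} are critical-line values of $L(2s-1/2,\chi_{n_0}\chi')$, and feeding convexity for these into the sum over the $\asymp\abs{t}$ effective Fourier modes again yields only a polynomial bound. There is also a secondary defect: even granting the pointwise log bound for both Eisenstein factors, the compact-part contribution would be $O(\log(1+\abs{t})\cdot\log(1+\abs{u}))$, which is not $O(\log((1+\abs{t})(1+\abs{u})))=O(\log(1+\abs{t})+\log(1+\abs{u}))$.

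The paper's (one-line) proof avoids pointwise information entirely and works in $L^2$. Since $\psi\otimes\chi$ is a cusp form, hence bounded and of rapid decay, one may replace each Eisenstein series by its truncation $\Lambda^Y E$ at the cost of $O(1)$ (the discarded pieces involve a constant term of size $O(y^{1/2})$ against the exponentially decaying cusp form in the cuspidal zones), and then apply Cauchy--Schwarz:
\begin{equation*}
\abs{I}\leq \norm{\psi\otimes\chi}_\infty\,\norm{\Lambda^Y E_{0}(\cdot,1/2+iu,1/2)}_2\,\norm{\Lambda^Y E_{\infty}(\cdot,1/2+it,1/2)}_2+O(1).
\end{equation*}
The Maa\ss--Selberg relation expresses each truncated norm through $\log Y$ plus the logarithmic derivative of the scattering matrix \eqref{scattering-explicit}, whose entries are essentially $\xi(4s-2)/\xi(4s-1)$; since $\Phi(1/2+it,1/2)$ is unitary this gives $\norm{\Lambda^Y E(\cdot,1/2+it,1/2)}_2^2=O(\log\abs{t})$, and similarly in $u$. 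The product is $O(\sqrt{\log\abs{t}\log\abs{u}})$, which is the stated bound by AM--GM. If you want to keep a ``direct'' argument, this $L^2$ route is the one to take; the only pointwise input it needs is the boundedness of the cusp form.
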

\begin{proof}
  This follows from the Maa\ss-Selberg relation, and known properties  of the relevant scattering matrix.
\end{proof}
It is tempting to speculate whether the above bound on $
I(\psi,\chi,\chi',1/2+it,1/2+iu)$ can be used to bound
$Z(s,w,\chi,\chi')$ through \eqref{friedberghoffstein}. What we can
prove is the following:

Denote the expression in the square brackets of
\eqref{friedberghoffstein} by $\tilde
I(\psi,\chi,\chi',s,w)$. We then find that
\begin{align}\nonumber\tilde I(\psi,&\chi,\chi',s,w)\pm\tilde
I(\psi,\chi,\chi_4\chi',s,w)\\ &=(Z(s,w,\chi,\chi')\pm
Z(s,w,\chi,\chi_4\chi'))(G_+(w)\pm \chi(-1)b_{-1}G_-(w)).\label{linearcomb-integrals}\end{align}

\begin{lemma} \label{lincombZs}Assume that $\psi$ is a cusp form. Then for $s=1-w=1/2+it$
\begin{equation}\label{MS-bound}Z(s,w,\chi,\chi')+\chi(-1)b_{-1}Z(s,w,\chi,\chi_4\chi')=O((1+\abs{t})^{1/2+\e}).\end{equation}
\end{lemma}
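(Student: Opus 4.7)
The plan is to solve the $2\times 2$ linear system relating the pair of Rankin--Selberg integrals to the pair of Dirichlet series $Z(s,w,\chi,\chi')$ and $Z(s,w,\chi,\chi_4\chi')$, and then control the resulting ratio on the line $s = 1-w = 1/2+it$. Writing $a := \chi(-1)b_{-1}$ and using $\chi_4^2 = 1$, the expression inside the square brackets of \eqref{friedberghoffstein} gives
\begin{align*}
\tilde I(\psi,\chi,\chi',s,w) &= G_+(w)\, Z(s,w,\chi,\chi') + a\, G_-(w)\, Z(s,w,\chi,\chi_4\chi'),\\
\tilde I(\psi,\chi,\chi_4\chi',s,w) &= G_+(w)\, Z(s,w,\chi,\chi_4\chi') + a\, G_-(w)\, Z(s,w,\chi,\chi').
\end{align*}
Since $\psi$ is a real Hecke cusp form and $\chi$ is real-valued, $a\in\{\pm 1\}$; adding the two equations and using $a^2 = 1$ yields the clean diagonalization
$$Z(s,w,\chi,\chi') + a\, Z(s,w,\chi,\chi_4\chi') = \frac{\tilde I(\psi,\chi,\chi',s,w) + a\, \tilde I(\psi,\chi,\chi_4\chi',s,w)}{G_+(w) + G_-(w)}.$$

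For the numerator I would combine Lemma~\ref{bound-on-integrals} with the explicit prefactor in \eqref{friedberghoffstein} relating $I$ to $\tilde I$. On the line $s = 1-w = 1/2+it$ this prefactor contains $\zeta_2(1-4it)\zeta_2(1+4it)$, both of at most logarithmic size in $|t|$, together with monomial factors of modulus bounded independently of $t$; hence each $\tilde I$ is polylogarithmic in $(1+|t|)$, and so is the numerator. For the denominator I would evaluate $G_\pm(w)$ at $w = 1/2 - it$ using the asymptotics of the Mellin transform of the product of Whittaker functions provided by Lemma~\ref{nicoleresult}; this supplies a lower bound $|G_+(w) + G_-(w)| \gg (1+|t|)^{-1/2-\e}$. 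Dividing yields the claimed bound $O((1+|t|)^{1/2+\e})$, which exactly matches the convexity estimate specialised to $u = -t$ (where $\mathfrak q(t,-t) = (1+|t|)^{2}$).

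The main obstacle is the lower bound on $|G_+(w)+G_-(w)|$. An upper bound on either $G_\pm$ individually is standard Stirling/Whittaker analysis, but a lower bound on the sum must rule out essentially total cancellation between the two Mellin transforms as $|t|\to\infty$. This is precisely the content of Lemma~\ref{nicoleresult}, and the exponent $-1/2$ appearing there is exactly what forces the exponent $1/2+\e$ (rather than something smaller) in the statement of the lemma; any improvement at the Mellin-transform level would immediately translate into a subconvex bound for this special combination along the configuration $w = 1-s$.
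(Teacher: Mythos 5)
Your proposal is correct and follows essentially the same route as the paper: diagonalize the pair of unfolded Rankin--Selberg identities to get $(Z+aZ')(G_++G_-)=\tilde I+a\tilde I'$, bound the numerator via the Maa\ss--Selberg estimate of Lemma~\ref{bound-on-integrals} together with the $\zeta_2(1\pm 4it)=O(\log|t|)$ prefactor, and lower-bound $|G_++G_-|\gg(1+|t|)^{-1/2}$ from the asymptotics behind Lemma~\ref{nicoleresult} (Remark~\ref{order-of-growth}), which is exactly where the non-cancellation of the two Mellin transforms enters. The only nitpick is that ``adding the two equations'' should read ``adding the first equation to $a$ times the second'' (which is where $a^2=1$ is used), but the displayed identity you derive is the correct one.
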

\begin{proof}From \eqref{linearcomb-integrals} we see that 
$$Z(s,w,\chi,\chi')+\chi(-1)b_{-1}Z(s,w,\chi,\chi_4\chi')(G_+(w)+G_-(w))$$
equals $$\tilde I(\psi,\chi,\chi',s,w)+\chi(-1)b_{-1}\tilde I(\psi,\chi,\chi_4\chi',s,w).$$
 The claim now follows from Lemma~\ref{bound-on-integrals} and
 Lemma~\ref{nicoleresult}, combined with Remark~\ref{order-of-growth}.
\end{proof}
\begin{remark}\label{finalremarks}
  We notice that with the restriction above on $s,w$  the conductor
$\mathfrak q(t,-t)$ is of the order $(1+\abs{t})^2$. So the right-hand side in 
\eqref{MS-bound} is of the order 
$\mathfrak{q}(t,-t)^{1/4+\e}$, i.e. for the linear combination $Z(s,w,\chi,\chi')+\chi(-1)b_{-1}Z(s,w,\chi,\chi_4\chi')$ we have  proved the convexity
estimate  unconditionally. Surprisingly this {\lq soft\rq} method of using the Maa{\ss}--Selberg relations gives much stronger bounds than the harder method using Heath-Brown's Theorem \ref{heath-brown-average} and approximate functional equations. Unfortunately we do not know how to
prove this unconditionally for
$Z(s,w,\chi,\chi')$ and $Z(s,w,\chi,\chi_4\chi')$  separately. The main reason for this is that 
$G_+(w)-G_-(w)$ decays much faster than $G_+(w)+G_-(w)$ so using a similar argument on
$$Z(s,w,\chi,\chi')-\chi(-1)b_{-1}Z(s,w,\chi,\chi_4\chi')$$ gives very
poor
bounds. 
\end{remark}

If we use \eqref{another-nice-representation} 
(i.e. interchange sums) we find, like \cite[(1.2) p.~389]{FriedbergHoffstein:1995a}), that $\tilde I(\psi,\chi,\chi',s,w)$ equals
\begin{align*}
  \sum_{\substack{c=1\\(c,2)=1}}&\frac{\chi'(c)L^{**}(s-w+1/2,\psi,c,\chi)}{c^{2w-1/2}}(G_+(w)+\chi(-1)b_{-1}\chi_4(c)G_-(w))
.
\end{align*}
By taking linear combinations over different $\chi'$ we can restrict
to $c$ in a specific residue class, as  in the work of
Friedberg and Hoffstein \cite{FriedbergHoffstein:1995a}.

\section{Limits of weight 1/2 Eisenstein series}\label{sec:main-theorem}
We consider
separately Maa\ss{} cusp forms and incomplete Eisenstein series,
i.e. analyze 
$$\int_\GmodH\psi(z) \abs{E(z,1/2+it,1/2)}^2d\mu(z),$$ where $\psi$ is
either a Maa\ss{} cusp form or an incomplete Eisenstein series.  
Then a standard approximation argument, see \cite[p.~217]{LuoSarnak:1995a}, implies the result \eqref{againagain}.

\subsection{The cuspidal contribution.}
Let $\psi$ be a cuspidal element of a weight zero Hecke basis for
$\Gamma_0(4)$ with eigenvalue $s_0(1-s_0)$ 
and Fourier expansion
\begin{equation*} 
\psi (z)=\sum_{n\neq0} b_nW_{0,s_0-1/2}(4\pi \abs{n}y)e(nx). 
\end{equation*} 
We will freely use that we can assume that the Fourier coefficients
are real.

We want to study 
\begin{equation*}
  \int_{\Gamma\setminus\H}\psi (z)\abs{E(z,s,1/2)}^2 d\mu(z) 
\end{equation*}
when  $\Re(s)=1/2$. It turns out to be convenient to  consider the 
slightly more general integral 
\begin{equation*}
I(s,w)=\int_{\Gamma\setminus\H}\psi (z)E(z,w,1/2)
\overline{E(z,\overline s,1/2)}d\mu(z). 
\end{equation*}
For sufficiently large $\Re (s)$, we can unfold to get
\begin{equation}
I(s, w)=\int_{\Gamma_\infty\setminus\H}\psi(z)E(z,w,1/2) y^s d\mu(z).
\end{equation} 
Using the Fourier expansions of $\psi$ and $E_\infty(z,w,1/2)$ 
and computing the $x$-integral we find
\begin{align}
I(s,w) &= 
\int_0^{\infty} \sum_{n\neq 0}b_n \phi_{-n}(w,1/2) W_{0,s_0-1/2}(4\pi \abs{n}y) 
W_{\frac{-n}{\abs{n}}\frac{1}{4},w-1/2}(4\pi \abs{n}y) y^{s-1}\frac{dy}{y} \nonumber \\
\label{longago} 
&=\sum_{n\neq 0}\frac{b_n \phi_{-n}(w,1/2)}{(2\pi\abs{n})^{s-1}} 
\int_0^{\infty}W_{0,s_0-1/2}(2y)W_{\frac{-n}{\abs{n}}\frac{1}{4},w-1/2}(2y) y^{s-1}\frac{dy}{y}.
\end{align} 
We consider the series 
$$ 
Z_\pm(s,w) := 
\frac{\Gamma(w\mp\frac{1}{4})}{\pi^we^{-i\pi/4}}\zeta_2(4s-1)\zeta_2(4w-1) 
\sum_{\pm n=1}^\infty\frac{b_{n}\phi_{- n}(w,1/2)}{\abs{n}^{s-1}}. 
$$
By \eqref{good-expression} we see that
\begin{equation}
  \label{double-dirichlet-first-representation}
Z_\pm(s,w) = 
\zeta_2(4s-1) \sum_{\pm n=1}^\infty\frac{b_{ n}r_2(w,- n)L^*(2w-1/2,-n,1)}{\abs{n}^{s-w}}  .
\end{equation}
The next proposition reduces many questions about $Z_\pm(s,w)$ to
questions about $Z(s,w,\chi,\chi')$. 
Consider the Dirichlet polynomial 
$$ 
T(s,w) := \prod_{\e\in\{\pm 1\}} p_2\big(\e 2^{-(s+w-1/2)}\big) 
p_2\big(\e 2^{-(s-w+1/2)}\big) 
$$
where $p_2(z)$ is defined in \eqref{p2poly}.
\begin{proposition}\label{simpler-series-translation} 
There exist functions $f_\pm(s,w, \chi,\chi')$ bounded in vertical strips such that 
  \begin{equation*}
  T(s,w)  Z_\pm(s,w)=\sum_{\chi,\chi'}f_\pm(s,w,\chi,\chi')Z(s,w,\chi,\chi'),
  \end{equation*}
where the sum is over all pairs of characters mod~$8$. 
\end{proposition}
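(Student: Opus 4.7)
The plan is to split the sum in \eqref{double-dirichlet-first-representation} defining $Z_\pm(s,w)$ according to the $2$-adic valuation of $n$, writing $|n|=2^a m$ with $m$ odd. Using Hecke multiplicativity $b_n=b_{\pm 1}t_{|n|}/\sqrt{|n|}$ together with $t_{2^a m}=t_{2^a}t_m$, this converts $Z_\pm$ into a double sum over $a\ge 0$ and odd $m$, in which the power-of-two contribution and the odd part will be separated.

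For the $L^*$-factor, I would analyze the squarefree part of $-n=-2^a m$ for $m=m_0m_1^2$ with $m_0$ odd squarefree. For $n>0$ (the $Z_+$ case), the squarefree part of $-n$ is $-m_0$ when $a$ is even and $-2m_0$ when $a$ is odd. Since $\chi_{-m_0}=\chi_4\chi_{m_0}$ and $\chi_{-2m_0}=\chi_4\chi_8\chi_{m_0}$ on odd integers, and since the product in \eqref{correction-polynomials-definition} runs only over odd primes dividing $n_1=2^{\lfloor a/2\rfloor}m_1$, one reads off directly that $L^*(2w-1/2,-n,1)=L^*(2w-1/2,m,\chi')$ for some $\chi'\in\{\chi_4,\chi_4\chi_8\}$ depending on the parity of $a$. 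The $Z_-$ case is analogous, with $\chi'\in\{1,\chi_8\}$. For $r_2(w,-n)$, equation \eqref{isalsouseful} gives
\begin{equation*}
r_2(w,-n)=\tfrac{1+i}{4}u_{\lfloor a/2\rfloor}(2^{-(2w-1)})+4^{-\lfloor a/2\rfloor(2w-1)}r_2(w,n_0'),
\end{equation*}
with $n_0'\in\{\pm m,\pm 2m\}$, and \eqref{isuseful} makes $r_2(w,n_0')$ explicit as a polynomial in $2^{-(2w-1)}$ whose coefficients depend on $m$ only through $m\bmod 8$, via the factor $\chi_8(\pm m)$ appearing when $\pm m\equiv 1\pmod 4$.

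The sum over $a$, split into even and odd parts, is then evaluated via the Satake identities at $2$,
\begin{equation*}
\sum_{\substack{a\ge 0\\ a\equiv \varepsilon\,(2)}}t_{2^a}y^a=\tfrac12\!\left(\frac{1}{p_2(y)}+\frac{(-1)^\varepsilon}{p_2(-y)}\right),
\end{equation*}
applied at $y=2^{-(s-w+1/2)}$ (from the overall $|n|^{s-w+1/2}=2^{a(s-w+1/2)}m^{s-w+1/2}$ scaling) and, for the contribution coming from the second term of \eqref{isalsouseful}, at $y=2^{-(s+w-1/2)}$ (since $4^{-r(2w-1)}/2^{2r(s-w+1/2)}=(2^{-(s+w-1/2)})^{2r}$). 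Using the telescoping form $u_r(x)=(x^{2(r+1)}-x^2)/(x^2-1)$, the first contribution collapses into the \emph{difference} of two such generating functions, evaluated at arguments that coincide when $x=\pm 1$, so the apparent pole at $x^2=1$ is removable. The only remaining denominators are precisely of the form $p_2(\pm 2^{-(s\pm w\mp 1/2)})$, and the factor $T(s,w)$ is exactly the product of these, so multiplication by $T(s,w)$ turns all the $a$-summed coefficients into honest Dirichlet polynomials in $2^{-s},2^{-w}$, which are bounded in vertical strips. What remains is a finite linear combination of sums of the shape $\sum_{(m,2)=1}t_mg(m\bmod 8)L^*(2w-1/2,m,\chi')/m^{s-w+1/2}$; expanding the function $g$ on $(\mathbb Z/8\mathbb Z)^*$ as a linear combination of the four characters mod $8$ and comparing with \eqref{therightseries} produces the desired decomposition.

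The main obstacle is the bookkeeping of the $2$-adic pieces in the $a$-summation: one must verify that the rational factor $x^2/(x^2-1)$ created by $u_r(x)$ is indeed cancelled by the zero of the bracketed difference at $x=\pm 1$, so that $T(s,w)Z_\pm(s,w)$ really is a Dirichlet polynomial in $2^{-s},2^{-w}$ (and not merely a meromorphic function in $w$) times $Z(s,w,\chi,\chi')$. A notational subtlety arises when $2$ is ramified for $\psi$ (so $p_2$ has degree $1$ and $T(s,w)$ reduces to fewer factors), but the analysis is otherwise unchanged; the boundedness of $f_\pm(s,w,\chi,\chi')$ in vertical strips is then automatic, since every surviving factor is a polynomial in $2^{-s}$ and $2^{-w}$.
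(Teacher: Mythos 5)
Your argument is correct and is essentially the paper's own proof: the same decomposition $|n|=2^a m$ with $m$ odd, the same identification of $L^*(2w-1/2,-n,1)$ with $L^*(2w-1/2,m,\chi')$ for a character $\chi'$ of conductor dividing $8$ determined by the sign and the parity of $a$, the same evaluation of the $2$-power sums via the generating function $1/p_2(y)$ from \eqref{p2poly} split into even and odd indices, and the same observation that the factor $x^2/(x^2-1)$ produced by $u_r(x)$ is harmless because the accompanying difference of generating functions vanishes at $x=\pm 1$. The surviving denominators $p_2(\pm 2^{-(s+w-1/2)})$ and $p_2(\pm 2^{-(s-w+1/2)})$ are exactly the factors of $T(s,w)$, and the final expansion over residues mod $8$ matches the paper's splitting of the $m$-sum into classes mod $8$.

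The one real omission: the proposition is invoked for an arbitrary cuspidal element of a Hecke basis for $\Gamma_0(4)$, and your input $b_n=b_{\pm1}t_{|n|}/\sqrt{|n|}$ together with $t_{2^am}=t_{2^a}t_m$ holds only for newforms. For an oldform $\psi=\psi_1(2^jz)$ with $\psi_1$ primitive and $j=1,2$, the coefficients satisfy $b_n(\psi)=b_{n/2^j}(\psi_1)$ (vanishing unless $2^j\mid n$), so the $2$-adic bookkeeping in your $a$-summation shifts; the series in \eqref{longago} becomes a sum involving $\phi_{-2^jn}(w,1/2)$ and must be handled using the explicit form of those coefficients, as the paper notes in its closing sentence. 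This is a short patch, but as written your proof does not cover that case.
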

\begin{proof}
We first assume that $\psi$ is a newform. Then we have 
$$ 
b_n=b_{\frac{n}{\abs{n}}}{\abs{n}}^{-1/2}t_{\abs{n}}, 
$$
where $\{t_n\}_{n\in \mathbb{N}}$ are the coefficients of $L(s,\psi)$. We note that if $m\geq 1$ is odd then 
$ \chi_{(\pm2^lm)_0}=\chi_{m_0}\chi$ where $ m_0 $ denotes 
the squarefree part of $ m $  
for some character $\chi$ whose 
conductor divides 8, namely 
$$ 
\chi(d)=\begin{cases}
\jacobi{\pm 2}{d}, &\textrm{if  } l \textrm{ odd,}\\
\jacobi{\pm 1}{d}, &\textrm{if  } l \textrm{ even.}
\end{cases} 
$$ 
Notice that $\chi$ depends only on $l$ mod~$2$ and the sign $\pm$.
For the same $\chi$ we have $ q(w,m,\chi)= q(w,\pm 2^lm) $. It
follows that $L^*(s,m,\chi)=L^*(s,\pm 2^lm,1)$.  We write 
the summation index $n$ in (\ref{double-dirichlet-first-representation}) 
as $n=2^lm$ where $m$ is odd and split the sum as 
$$ 
\sum_{\substack{l=0\\l \textrm{ odd }}}^\infty \sum_{\substack{\pm m=1\\(m,2)=1}}^\infty \cdots 
+ \sum_{\substack{l=0\\l \textrm{ even }}}^\infty \sum_{\substack{\pm m=1\\(m,2)=1}}^\infty\cdots. 
$$
We split the $m$ sum further according to $m\equiv 1, 3, 5, 7 \, (8)$, which can be done by using a linear combination of
characters. We then use the explicit formulae for $r_2(w, -n)$ in 
\eqref{isuseful}, \eqref{isalsouseful}, and that the Fourier coefficients 
satisfy the Hecke relations to see that $Z_\pm(s,w)$ can be written as a linear combination of $Z(s, w, \chi, \chi')$   with coefficients being functions bounded on vertical strips
times one of the following series:
\begin{equation}
  \label{firsttwo}
\sum_{j=0}^{\infty}\frac{t_{2^{2j}}}{2^{2j(s+w-1/2)}},\quad  \sum_{j=0}^{\infty}\frac{t_{2^{2j+1}}}{2^{(2j+1)(s+w-1/2)}}
  \end{equation}
and 
\begin{equation}
  \label{lasttwo}
\sum_{j=0}^{\infty}\frac{t_{2^{2j}}u_j(2^{-(2w-1)})}{2^{2j(s-w+1/2)}}, \quad \sum_{j=0}^{\infty}\frac{t_{2^{2j+1}}u_j(2^{-(2w-1)})}{2^{(2j+1)(s-w+1/2)}}. 
\end{equation}
 We easily see that $$2\sum_{j=0}^{\infty}\frac{t_{2^{2j}}}{2^{2js}}=\frac{1}{p_2(2^{-s})}+\frac{1}{p_2(-2^{-s})}, 
\quad 2\sum_{j=0}^{\infty}\frac{t_{2^{2j+1}}}{2^{(2j+1)s}}=\frac{1}{p_2(2^{-s})}-\frac{1}{p_2(-2^{-s})}.$$
We  see also that, using \eqref{def-u},
$$\sum_{j=0}^{\infty} \frac{t_{2^{2j}}u_j(x)}{2^{2js}}=\frac{x^2}{2(1-x^2)}\left(\frac{1}{p_2(2^{-s})}+\frac{1}{p_2(-2^{-s})}-\frac{1}{p_2(x2^{-s})}-\frac{1}{p_2(-x2^{-s})}\right),
$$
which has no poles coming out of $x^2-1$ in the denominator. Similarly we see that
$$\sum_{j=0}^{\infty} \frac{t_{2^{2j+1}}u_j(x)}{2^{(2j+1)s}}=\frac{x^2}{2(1-x^2)}\left(\frac{1}{p_2(2^{-s})}-\frac{1}{p_2(-2^{-s})}-\frac{1}{x}\left(\frac{1}{p_2(x2^{-s})}-\frac{1}{p_2(-x2^{-s})}\right)\right).
$$
We substitute in the last four equations $s+w-1/2$ or $s-w+1/2$ for
$s$ as required and $x=2^{-(2w-1)}$ to identify the possible
polynomials that appear in the denominators. These have product $T(s,
w)$. We now notice that multiplying any of the 4 functions in \eqref{firsttwo},
\eqref{lasttwo} by $T(s,w)$ we get holomorphic functions bounded on
vertical strips, which proves the claim.

If $\psi$ is an oldform with, say, $\psi=\psi_1(2^jz)$ with $\psi_1$ a
primitive form, and $j=1,2$, then the series in \eqref{longago} becomes 
\begin{equation*}
  \sum_{n\neq 0}\frac{b_n(\psi_1) \phi_{-2^jn}(w,1/2)}{(2\pi\abs{2^jn})^{s-1}}
\end{equation*}
which by the explicit expression for $\phi_{n}(w,1/2)$ can be
analyzed similarly to the newform case.
\end{proof}

\begin{remark}\label{itneverends}
  In Theorem \ref{cuspidalcontribution} below, we need to study
  $Z_{\pm}(1/2+it, 1/2-it)$. For $\Re(s)=\Re (w)=1/2$ we notice that
  by \eqref{anotherboundagain} we have $1/{T(s, w)}=O(1)$.
\end{remark}
\begin{theorem}\label{cuspidalcontribution} 
Assume that for any $\chi,\chi'$ mod~$8$ the function $Z(s,1-s,\chi,\chi')$
satisfies a subconvex bound. Then 
$$ 
\int_{\Gamma\setminus\H}\psi(z)\abs{E(z,1/2+it,1/2)}^2 d\mu(z) \to 0 
$$
as $\abs{t}\to\infty$. 
\end{theorem}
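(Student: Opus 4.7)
The plan is to specialise the unfolded expression \eqref{longago} to $(s,w)=(1/2-it,1/2+it)$, which is exactly the configuration matching
\[
\int_{\GmodH}\psi(z)\,|E(z,1/2+it,1/2)|^2\,d\mu(z)=I(1/2-it,1/2+it).
\]
The decisive observation is that in \eqref{longago} the Mellin transform
\[
M_\pm(s,w):=\int_0^{\infty}W_{0,s_0-1/2}(2y)\,W_{\mp 1/4,\,w-1/2}(2y)\,y^{s-1}\,\frac{dy}{y}
\]
is independent of $n$, so the sum factorises. Pulling it out of the sum and using the definition of $Z_\pm(s,w)$ gives, as in \eqref{unfolded},
\[
I(s,w)=\sum_{\pm}\frac{(2\pi)^{1-s}\pi^{w}e^{-i\pi/4}}{\Gamma(w\mp 1/4)\,\zeta_2(4s-1)\,\zeta_2(4w-1)}\,M_\pm(s,w)\,Z_\pm(s,w).
\]

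At $(s,w)=(1/2-it,1/2+it)$ the parameter $s+w=1$ is constant, so the analytic conductor \eqref{conductor-def} collapses to $\mathfrak{q}(-t,t)=(1+|t|)^{2}$. Proposition \ref{simpler-series-translation} expresses $T(s,w)Z_\pm(s,w)$ as a linear combination of $Z(s,w,\chi,\chi')$ with coefficients bounded in vertical strips, and Remark \ref{itneverends} ensures $1/T(s,w)=O(1)$ on $\Re(s)=\Re(w)=1/2$. Hence the assumed subconvex bound for $Z(s,1-s,\chi,\chi')$ yields
\[
Z_\pm(1/2-it,1/2+it)=O\bigl((1+|t|)^{1/2-2\delta}\bigr)
\]
for some $\delta>0$; note that any positive saving is allowed. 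The factor $\zeta_2(1\pm 4it)^{-1}$ contributes at most $O((\log(2+|t|))^{A})$ by the classical lower bound on $\zeta(s)$ on the line $\Re(s)=1$.

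The remaining ingredient is the normalised Whittaker factor $M_\pm(1/2-it,1/2+it)/\Gamma(1/2+it\mp 1/4)$, whose asymptotic behaviour is the subject of the appendix, and in particular of Lemma \ref{nicoleresult}. In the regime where the order $w-1/2=it$ of one Whittaker function and the Mellin exponent $s-1=-1/2-it$ are both of size $|t|$ with opposite signs, a careful analysis of this Barnes-type integral shows that the quotient decays like $(1+|t|)^{-1/2}$, which is exactly the gap between the Maass--Selberg bound $O(\log|t|)$ and a polynomial saving. Combining the three estimates one obtains
\[
I(1/2-it,1/2+it)=O\bigl((1+|t|)^{-2\delta}(\log(2+|t|))^{A}\bigr)=o(1),
\]
which proves the theorem. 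The main obstacle is Lemma \ref{nicoleresult} itself: one needs sharp joint asymptotics for the Mellin transform of a product of Whittaker functions when the orders and the Mellin parameter are simultaneously large with specific cancellations; a crude Stirling estimate on the gamma ratio alone would not recover the decisive factor $(1+|t|)^{-1/2}$ and would only reproduce the Maass--Selberg ceiling.
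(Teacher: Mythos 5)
Your argument is correct and is essentially the paper's own proof: unfold to \eqref{longago}, factor out the $n$-independent Mellin transform to express $I(s,1-s)$ in terms of $Z_\pm(s,1-s)$, convert the assumed subconvexity into $Z_\pm(1/2+it,1/2-it)=O(|t|^{1/2-2\delta})$ via Proposition \ref{simpler-series-translation} (with $1/T=O(1)$), and combine with the $O((1+|t|)^{-1/2})$ decay from Lemma \ref{nicoleresult} and the $1/\zeta(1+it)$ bound to conclude $I=O(|t|^{-2\delta+\varepsilon})\to 0$. You have correctly identified that Lemma \ref{nicoleresult} is the ingredient that beats the Maass--Selberg ceiling, exactly as in the paper.
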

\begin{proof}
By Proposition \ref{simpler-series-translation} a subconvex bound 
with saving $\delta$ translates into a bound $Z_\pm(s,1-s) = 
O(\abs{t}^{2(1/4-\delta)})$ when $\Re(s)=1/2$. 
Combining this with the bound in Lemma~\ref{nicoleresult}, the 
estimate $1/\zeta(1+it)=O(\log |t|)$ \cite[Eq.~3.11.8]{Titchmarsh:1986a}, 
and the identity \eqref{longago} we see that $I(s,1-s)=O(\abs{t}^{2(1/4-\delta)-1/2+\varepsilon})$ 
 for any $\varepsilon>0$  when $\Re(s)=1/2$. 
Since 
$$ 
I(1/2+it,1/2-it) 
= \int_{\Gamma\setminus\H} \psi(z) \abs{E(z,1/2-it,1/2)}^2 d\mu(z), 
$$ 
we find  that, when $\delta>0$,  $I(1/2+it,1/2-it)\to 0$ as $\abs{t}\to \infty$. 
\end{proof}

\begin{remark} 
  In the proof above we see that the trivial bound from 
Theorem~\ref{convexity-independent} only gives $O(\abs{t}^{1/2+\varepsilon})$.
\end{remark} 

\subsection{The incomplete Eisenstein series contribution}
In the following we choose a fundamental domain of $ \Gamma $ such that 
\begin{displaymath} 
\mathcal{D} = 
\mathcal{D}_0 \cup \bigcup_{j = 1}^3 \sigma_{\mathfrak{a}_j} \mathcal{D}^Y 
\end{displaymath} 
where $ \mathcal{D}^Y := \{x + iy: 0<x<1, y>Y\} $, $ Y $ sufficiently 
large, $ \mathcal{D}_0 $ is a suitable compact set and, as before, 
$ \sigma_{\mathfrak{a}_j} $ denotes the scaling matrix of the cusp $ \mathfrak{a}_j $. 

In order to introduce the incomplete Eisenstein series let $ h(y) \in C^{\infty} 
(\R^+) $ be a function which decreases rapidly at $ 0 $ and $ \infty $, 
and whose derivatives are also of rapid decay. Its 
Mellin transform evaluated at $-s$ is
\begin{equation}
H(s)=\int_0^{\infty}h(y)y^{-s}\frac{dy}{y}
\end{equation}
and thus by the Mellin inversion formula we have 
\begin{equation}
h(y) = \frac{1}{2\pi i} \int_{\Re s = a} H(s) y^s \, ds
\end{equation}
for any $ a \in \R $. The function $ H(s) $ is entire and $ H(a + it) $ is in 
the Schwartz space in the $ t $ variable for any $ a \in \R $. The incomplete 
Eisenstein series corresponding to the cusp $ \mathfrak{a} $ is then given by 
\begin{equation}
F_h(z, \mathfrak{a}) = \sum_{\gamma\in\Gamma_{\mathfrak{a}}\setminus\Gamma} h(\Im 
\sigma^{-1}_{\mathfrak{a}} \gamma z)
= \frac{1}{2\pi i} \int_{\Re s = a > 1} H(s) E_{\mathfrak{a}}(z,s,0) \, ds.
\end{equation}
For $ i = 1, 2, 3 $ we are interested in the behavior of 
\begin{displaymath} 
J(t, \mathfrak{a}_i) = 
\int_{\GmodH} F_h(z, \mathfrak{a}_i) \left|E\left(z, \frac{1}{2} 
+ it, \frac{1}{2}\right)\right|^2 d\mu(z) \ \mbox{ as } \abs{t} \rightarrow \infty. 
\end{displaymath} 
In the following we only treat the contribution from the cusp at
infinity but the other contributions can be dealt with
similarly. Unfolding the incomplete Eisenstein series we find 
\begin{align*} 
J(t, \infty) &=  \int_{\GmodH} F_h(z, \infty) \, d\mu_t(z) 
=  J_1(t, \infty) + J_2(t, \infty) 
\end{align*} 
with 
\begin{align} 
\label{J1a} 
J_1(t, \infty) &:= \int_0^{\infty} h(y) \Big|y^{\frac{1}{2}+it}+\phi(\frac{1}{2}+it,1/2) 
y^{\frac{1}{2}-it}\Big|^2 \frac{dy}{y^2}, \\ 
\label{J2a} 
J_2(t, \infty) &:= \int_0^{\infty} h(y) 
\sum_{n \not= 0} \Big|\phi_n(1/2+it, 1/2)\Big|^2 \Big|W_{\frac{n}{4|n|}, it} 
(4 \pi |n|y)\Big|^2 \frac{dy}{y^2}. 
\end{align} 
The integral $ J_1(t, \infty) $ is easily dealt with. Namely, we obtain 
\begin{align}  
\nonumber J_1(t, \infty) 
& = 
(1+|\phi(1/2+it, 1/2)|^2) \int_0^{\infty} h(y) \, \frac{dy}{y} \\ 
\nonumber & \quad 
+ \phi(1/2+it, 1/2) \int_0^{\infty} h(y) y^{-2it} \, \frac{dy}{y} \\ 
\nonumber & \quad 
+ \overline{\phi(1/2+it, 1/2)} \int_0^{\infty} h(y) y^{2it} \, \frac{dy}{y} \\ 
\label{J1} & = 
(1+|\phi(1/2+it, 1/2)|^2) H(0) + \phi(1/2+it, 1/2) H(2it) \\ 
\nonumber& \quad 
+ \overline{\phi(1/2+it, 1/2)} H(-2it)\\
\nonumber &=O(1). 
\end{align} 
For the integral $ J_2(t, \infty) $ we find, using the rapid decay of the 
Whittaker function and the Mellin inversion formula, 
\begin{equation} \label{J2} 
J_2(t, \infty) = 
\frac{1}{2 \pi i} \int_{\Re s=a>1} H(s) R_1\Big(|E(z, 1/2+it, 1/2)|^2, s\Big) ds 
\end{equation} 
where 
\begin{align}\nonumber R_1\Big(|E(z, w, 1/2)|^2, s\Big) & =\sum_{n\neq
  0}\abs{\phi_n(w,1/2)}^2\int_{0}^\infty
\abs{W_{\frac{n}{4\abs{n}},w-1/2}(4\pi
  \abs{n}y)}^{2}y^{s-1}\frac{dy}{y}\\
\label{R1}
&= \sum_{n\neq0} \frac{\abs{\phi_n(w,1/2)}^2}{(2\pi \abs{n})^{s-1}} 
\int_{0}^\infty \abs{W_{\frac{n}{4\abs{n}},w-1/2}(2y)}^{2}y^{s-1}\frac{dy}{y}. 
\end{align}
In order to analyze the asymptotic behavior of $ J_2(t, \mathfrak{\infty})$
we need to understand the function $R_1(|E(z, w, 1/2)|^2, s)$. 
There are (at least) two ways to do this: To use properties of the double
Dirichlet series we defined in Section \ref{sec:doubledirichletseries}, 
or to use Zagier's theory of the Rankin-Selberg method for functions 
that are not of rapid decay but satisfy certain mild growth condition. 
We will actually use a combination of these two techniques. We want to 
shift the line of integration in (\ref{J2}) to $ \Re (s) = 1/2 $. For 
this we need to identify the poles, estimate them, see what the contribution 
of $ \int_{\Re (s) = 1/2} H(s) R_1(|E(z, w, 1/2)|^2, s) $ is to the 
asymptotics. For the first 
and third aspect we use the Rankin-Selberg approach and for the second 
aspect the multiple Dirichlet series approach works best. 

We first describe why double Dirichlet series techniques apply. The
growth of the Mellin transform of the absolute value of the Whittaker
function is analyzed in Lemma \ref{estimateM}. By combining
\eqref{good-expression}, \eqref{isuseful}, and \eqref{isalsouseful} 
we see that $\overline{\phi_n(\overline{w},1/2)}=\phi_n(w,1/2)$. 
This shows that when $\Re(w)=1/2$ we have
$$\abs{\phi_n(w,1/2)}^2=\phi_n(w,1/2)\phi_n(1-w,1/2).$$ The
right-hand side has the advantage of being meromorphic in $w$. 
We define
$$ 
\hat Z_{\pm}(s,w) = \frac{\Gamma(w\pm \frac{1}{4}) 
\Gamma(1-w\pm \frac{1}{4})}{\pi i} \sum_{\pm n=1}^\infty 
\frac{\phi_n(w,1/2)\phi_n(1-w,1/2)}{\abs{n}^{s-1}}, 
$$
which by \eqref{good-expression} equals
\begin{align*} 
& \frac{1}{\zeta_2(4w-1)\zeta_2(4(1-w)-1)} \cdot \\ 
& \qquad \qquad 
\sum_{\pm n=1}^\infty 
\frac{L^*(2w-1/2,n,1)L^*(2(1-w)-1/2,n,1)}{\abs{n}^s} r_2(w,n) r_2(1-w,n). 
\end{align*} 
We now show that $\hat Z_{\pm}(s,w)$ is directly related to the 
function $\hat Z\left(s, 
w,\chi,\chi'\right)$ defined in \eqref{onceagainaseries}. 
Let
\begin{equation*}
  U(s,w)=(1-2^{-(4w-1)})(1-2^{-2s})(1-2^{-(4w-2+2s)})(1-2^{-(-4w+2+2s)}).
\end{equation*}
\begin{proposition}\label{hatZtosomething} 
There exist functions $\hat f_{\pm,\kappa}(s,w,\chi,\chi')$ bounded 
in vertical strips such that 
\begin{align*} 
& U(s,w) \hat Z_{\pm}(s,w) = \frac{1}{\zeta_2(4w-1)\zeta_2(4(1-w)-1)} 
\times \\ 
& \times 
\sum_{\kappa\in\{0,1\}} \frac{\Gamma(\frac{2w-1/2+\kappa}{2})}{\Gamma 
(\frac{2(1-w)-1/2+\kappa}{2})} 
\sum_{\chi,\chi'}\hat f_{\pm,\kappa}(s,w,\chi,\chi') \hat Z\left(\frac{s+2w-1/2}{2}, 
\frac{s-2w+3/2}{2},\chi,\chi'\right). 
\end{align*}
\end{proposition}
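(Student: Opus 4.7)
The strategy is parallel to that of Proposition~\ref{simpler-series-translation}, with the role of the Hecke-eigenvalue factor replaced by the second $L^*$-factor. The plan is, first, to use the functional equation of $L^*(\cdot,n,1)$ to rewrite the product $L^*(2w-1/2,n,1)L^*(2(1-w)-1/2,n,1)$ as $n^{2w-1}$ times $L^*(2w-1/2,n,1)^2$ times factors depending only on $n \bmod 8$ and on a Gamma ratio indexed by a parity $\kappa\in\{0,1\}$; second, to write $n=\pm 2^l m$ with $m$ odd and use $L^*(s,n,1)=L^*(s,m,\chi_{\pm,l})$ (as derived in the proof of Proposition~\ref{simpler-series-translation}); third, to split the $m$-sum into residue classes mod~$8$ via characters $\chi'$ and identify the result with $\hat Z((s+2w-1/2)/2,(s-2w+3/2)/2,\chi,\chi')$; and finally, to evaluate the remaining geometric series in $l$, whose denominators assemble into $U(s,w)$.

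\textbf{Step 1 (functional equation).} Combining \eqref{standard-functional-equation} with Proposition~\ref{correction-polynomials}~\eqref{prop2} and the definition $L^*(s,m,\chi)=q(s,m,\chi)h_2(s,m_0,\chi)L(s,(\chi_{m_0}\chi)^*)$, one obtains, for $m$ a positive odd integer,
\begin{equation*}
L^*(3/2-2w,m,\chi) = m^{2w-1}\pi^{1-2w}\Big(\frac{\delta_{m_0,\chi}}{m_0}\Big)^{2w-1}\frac{h_2(3/2-2w,m_0,\chi)}{h_2(2w-1/2,m_0,\chi)}\,G_\kappa(w)\,L^*(2w-1/2,m,\chi),
\end{equation*}
where $G_\kappa(w)=\Gamma\bigl(\tfrac{2w-1/2+\kappa}{2}\bigr)/\Gamma\bigl(\tfrac{2(1-w)-1/2+\kappa}{2}\bigr)$ and $\kappa=\kappa(\chi,m_0)\in\{0,1\}$ is the parity of the primitive character $(\chi_{m_0}\chi)^*$, depending on $\chi$ and on $m_0\bmod 4$. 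All the remaining $2$-adic factors depend only on $m_0\bmod 8$. Multiplying by $L^*(2w-1/2,m,\chi)$ yields the desired representation of the product.

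\textbf{Step 2 ($2$-adic reduction and identification with $\hat Z$).} Parametrize $n=\pm 2^l m$ with $m\geq 1$ odd. As in the proof of Proposition~\ref{simpler-series-translation}, $L^*(s,n,1)=L^*(s,m,\chi_{\pm,l})$ for a character $\chi_{\pm,l}\in\{1,\chi_4,\chi_8,\chi_4\chi_8\}$ depending only on the sign $\pm$ and on $l\bmod 2$. By \eqref{isuseful} and \eqref{isalsouseful}, the product $r_2(w,n)r_2(1-w,n)$ is a function of $l$, of $m\bmod 8$ when $l$ is even, and independent of $m$ when $l$ is odd (since $r_2(w,\pm 2m)$ is constant for odd $m$). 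Expressing the indicator function of each residue class of $m\bmod 8$ as a linear combination of characters $\chi'$ mod~$8$, and applying Step 1 with $\chi=\chi_{\pm,l}$, the inner $m$-sum becomes
\begin{equation*}
\sum_{\substack{m\geq 1\\(m,2)=1}}\frac{\chi'(m)L^*(2w-1/2,m,\chi_{\pm,l})^2}{m^{s-(2w-1)}},
\end{equation*}
which, setting $s'=(s+2w-1/2)/2$ and $w'=(s-2w+3/2)/2$ so that $s'-w'+1/2=2w-1/2$ and $2w'-1/2=s-2w+1$, is exactly $\hat Z(s',w',\chi_{\pm,l},\chi')$ by \eqref{onceagainaseries}.

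\textbf{Step 3 (geometric sums and $U(s,w)$).} Summing over $l\geq 0$, the contributions combine the factor $2^{l(2w-1-s)}$ coming from $|n|^{-s}\cdot n^{2w-1}$, the $l$-dependent part of $r_2(w,n)r_2(1-w,n)$ (which involves $x_w=2^{-(2w-1)}$, $x_{1-w}=2^{2w-1}$ and the polynomials $u_{\lfloor l/2\rfloor}$), and the $2$-factor contributions from $h_2$ and $(\delta_{m_0,\chi}/m_0)^{2w-1}$. Using $x_w x_{1-w}=1$, the resulting geometric series produce denominators precisely from the set $\{1-2^{-(4w-1)},\,1-2^{-2s},\,1-2^{-(4w-2+2s)},\,1-2^{-(-4w+2+2s)}\}$, whose product is $U(s,w)$. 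Multiplication by $U(s,w)$ therefore clears all poles, and the coefficients $\hat f_{\pm,\kappa}(s,w,\chi,\chi')$ become Dirichlet polynomials in $2^{\pm s},2^{\pm w}$, hence bounded in vertical strips. Grouping the terms according to the two possible values $\kappa\in\{0,1\}$ of the parity (the $m_0\bmod 4$ dependence of $\kappa(\chi,m_0)$ being absorbed into $\chi'$) yields the stated identity.

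\textbf{Main obstacle.} The principal difficulty is the combinatorial bookkeeping in Step 3: verifying that every geometric sum produced by the interaction of $r_2(w,n)r_2(1-w,n)$ with $|n|^{-s}n^{2w-1}$ and with the $2$-factors $h_2$ and $(\delta_{m_0,\chi}/m_0)^{2w-1}$ has a denominator dividing $U(s,w)$, and that no spurious denominators (for instance of the form $1-2^{8w-4-2s}$, which appear in individual summands) survive in the total. These cancellations follow from the telescoping identities $u_r(x)y^{2r}=\sum_{j=0}^{r-1}y^{2j}$ and $u_r(x)u_r(y)=(2-x^{2r}-y^{2r})/(2-x^2-y^2)$ with $y=1/x$, but require careful tracking of all parity classes of $l$ and residue classes of $m\bmod 8$.
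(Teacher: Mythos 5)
Your proposal is correct and follows essentially the same route as the paper: the same ingredients (the $\hbox{GL}_1$ functional equation \eqref{standard-functional-equation} together with Proposition~\ref{correction-polynomials}~\eqref{prop2}, the explicit formulae \eqref{isuseful}--\eqref{isalsouseful} for $r_2$, the splitting of the odd part into residue classes mod~$8$, and the evaluation of the geometric series in $l$ whose denominators produce $U(s,w)$) appear in the paper's proof, which merely performs the two main steps in the opposite order (first the $2$-adic reduction to an intermediate series $\tilde Z$ with the product $L^*(2w-1/2,n,\chi)L^*(2(1-w)-1/2,n,\chi)$, then the functional equation to convert it into $\hat Z$). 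The reordering is immaterial, and your identification of the shifted arguments $s'=(s+2w-1/2)/2$, $w'=(s-2w+3/2)/2$ with \eqref{onceagainaseries} is exactly the paper's final comparison.
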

\begin{proof}
As in  the proof of Proposition \ref{simpler-series-translation} we
write $n=2^lm$ and split into sums over $l$ even, odd respectively. We
then split the $m$ sum according to the residue class mod~$8$ which
is a linear combination over characters mod~$8$. Inserting the
explicit formulae for $r_2(w,n)$ \eqref{isuseful},
\eqref{isalsouseful} we are led to consider the series
\begin{equation*}
  \sum_{j=0}^\infty u_j(x)u_j(y)z^j,\quad \sum_{j=0}^\infty
  u_j(x)z^j,\quad \sum_{j=0}^\infty z^j
\end{equation*} with $x,y,z$ being appropriate powers of 2. Since
these are all sums of geometric series, see \eqref{def-u}, they are explicitly computable and
 after multiplying by
$(1-2^{-2s})(1-2^{-(4w-2+2s)})(1-2^{-(-4w+2+2s)})$ they become
Dirichlet polynomials in powers of 2, hence holomorphic and bounded in
vertical strips. Therefore 
$$ 
(1-2^{-2s})(1-2^{-(4w-2+2s)})(1-2^{-(-4w+2+2s)})\hat Z_{\pm}(s,w)=\sum_{\chi,\chi'}\tilde
f_{\pm}(s,w,\chi,\chi')\tilde Z(s,w,\chi,\chi') 
$$
where 
\begin{equation*} 
\begin{split} 
\tilde Z(s, & w, \chi, \chi') = \\ 
& \frac{1}{\zeta_2(4w-1)\zeta_2(4(1-w)-1)}\sum_{ n=1}^\infty
\frac{\chi'(n)L^*(2w-1/2,n,\chi)L^*(2(1-w)-1/2,n,\chi)}{n^s} 
\end{split} 
\end{equation*} 
and $\tilde
f_{\pm}(s,w,\chi,\chi')$ are bounded in vertical strips.
Using the functional equation  on $L^*(2(1-w)-1/2,n,\chi)$ we see -- as
in the proof of Theorem \ref{first-functional-equation} -- that 
$$ 
(1-2^{-(4w-1)})\sum_{ n=1}^\infty
\frac{\chi'(n)L^*(2w-1/2,n,\chi)L^*(2(1-w)-1/2,n,\chi)}{n^s} 
$$ 
equals 
$$ 
\sum_{\kappa\in\{0,1\}} 
\frac{\Gamma(\frac{2w-1/2+\kappa}{2})}{\Gamma(\frac{2(1-w)-1/2+\kappa}{2})} 
\sum_{\chi,\chi'}\tilde{\tilde{f}}_{\kappa}(x,y,\chi,\chi') 
\tilde{\tilde{ Z}}(s,w,\chi,\chi') 
$$
where $ \tilde{\tilde{f}}_{\kappa}(x,y,\chi,\chi') $ is another set of functions 
bounded in vertical strips and 
$$ 
\tilde{\tilde{Z}}(s,w,\chi,\chi')=\sum_{ n=1}^\infty
\frac{\chi'(n)L^*(2w-1/2,n,\chi)^2}{ n^{s-2w+1}}. 
$$
Combining the above equations  and comparing with
\eqref{onceagainaseries} finishes the proof.
\end{proof}
The above lemma implies that many questions about $R_1(|E(z, w,
1/2)|^2, s)$ can be dealt with using $Z(s,w,\chi,\chi')$. We now
describe a different method for understanding $R_1(|E(z,w,1/2)|^2, 
s)$, namely Zagier's Rankin-Selberg method for 
functions not of rapid decay. This method was introduced by Zagier 
for the group SL$_2 (\Z) $ in \cite{Zagier:1981a} and generalized 
by Kudla (unpublished), Dutta Gupta \cite{Dutta-Gupta:1997a}, 
and Mizuno \cite{Mizuno:2005a}. 
Its usefulness for determining the contribution of the incomplete 
Eisenstein series to the asymptotics can already be seen in
\cite{Zelditch:1991}. 
We introduce the generalized Rankin-Selberg transform following 
Zagier \cite{Zagier:1981a} and Mizuno \cite{Mizuno:2005a}. 
 We write $ e_{ij}(y,s,k) = \delta_{ij} y^s + \phi_{ij}(s, k) y^{1-s} $ for the 
zero Fourier coefficient of $ E_{\mathfrak{a}_i}(z,s,k) $ at $
\mathfrak{a}_j $ and we denote the scattering matrix by $ \Phi(s, k) =
(\phi_{ij}(s, k))$. We note that for $\Gamma_0(4)$ the matrix $
\Phi(s, 0)$ is $3\times 3$ whereas $ \Phi(s, 1/2)$ is $2\times 2$. 
For the weight $0$ Eisenstein series we use the notation 
$E_i(z,s,0)=E_{\mathfrak a_i}(z,s,0)$.

\begin{theorem} {\cite[Th.~2]{Mizuno:2005a}}\label{theorem:RankinSelberg} 
Let $ F $ be a continuous functions on $ \H $ that is $ \Gamma $-invariant and 
satisfies for $ i = 1, 2, 3 $ 
\begin{displaymath} 
F(\sigma_{{\mathfrak a}_i} z) = \psi_i(y) + O(y^{-N}) \quad 
\mbox{for all } N \mbox{ as } y \rightarrow \infty, 
\end{displaymath} 
where 
\begin{displaymath} 
\psi_i(y) = \sum_{j=1}^l \frac{c_{ij}}{n_{ij}!} \, y^{\a_{ij}} \log^{n_{ij}} y, \quad n_{ij}\in {\mathbb N}\cup \{0\}, \quad  
i = 1, 2, 3. 
\end{displaymath} 
For such a function $ F $ the Rankin-Selberg transform $ R_i (F, s) $ 
corresponding to the cusp $ \mathfrak{a}_i $, $ i = 1, 2, 3 $, is defined by 
\begin{displaymath} 
R_i (F, s) := 
\int_0^{\infty} \int_0^1 (F(\sigma_{{\mathfrak a}_i} z) - \psi_i(y)) y^s d\mu(z), 
\end{displaymath} 
for $ \Re s$ sufficiently large. Then we have 
\begin{align} \label{rankin} 
\nonumber R_i(F, s) & = 
\int_{\mathcal{D}_0} F(z) E_i(z, s, 0) d\mu(z) \\ 
\nonumber & \quad 
+ \sum_{j=1}^3 \int_{\mathcal{D}^Y} (F(\sigma_{{\mathfrak a}_j} z) E_i(\sigma_{{\mathfrak a}_j} z, s, 0) 
- \psi_j(y) e_{ij} (y, s, 0)) d\mu(z) \\ 
\nonumber & \quad 
+ \sum_{j=1}^3 \phi_{ij} (s, 0) \int_Y^{\infty} \psi_j(y) y^{-s-1} \, dy 
- \int_0^Y \psi_i(y) y^{s-2} \, dy \\ 
& = 
\int_{\mathcal{D}_0} F(z) E_i(z, s, 0) d\mu(z) \\ 
\nonumber & \quad 
+ \sum_{j=1}^3 \int_{\mathcal{D}^Y} (F(\sigma_{\mathfrak a_j} z)
E_i(\sigma_{\mathfrak a_j} z, s, 0) 
- \psi_j(y) e_{ij} (y, s, 0)) d\mu(z) \\ 
\nonumber & \quad 
- \sum_{j=1}^3 \phi_{ij} (s, 0) \widehat{\psi_j}(1-s, Y) - 
\widehat{\psi_i}(s, Y) ,
\end{align} 
where 
\begin{equation*} 
\widehat{\psi_i}(s, Y) = 
\sum_{j=1}^l c_{ij} \sum_{m=0}^{n_{ij}} 
\frac{(-1)^{n_{ij}-m}}{m!} \frac{Y^{s+\alpha_{ij}-1} \log^m Y}{(s+\alpha_{ij}
-1)^{n_{ij}-m+1}} .
\end{equation*} 
Furthermore, for each $ i = 1, 2, 3 $ the function $ R_i(F,s) $ can be 
meromorphically continued to $ \C $ and we have the functional equation 
\begin{equation*} 
\mathcal{R}(F,s) := {}^t (R_1(F,s), R_2(F,s), R_3(F,s)) 
= \Phi(s,0) \mathcal{R}(F,1-s). 
\end{equation*} 
\end{theorem}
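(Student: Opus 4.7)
The plan is to follow Zagier's Rankin--Selberg method for functions of non-rapid decay \cite{Zagier:1981a}, adapted to the three cusps of $\Gamma_0(4)$ as in \cite{Mizuno:2005a}. The conceptual point is that both sides of (\ref{rankin}) represent the same regularization of the formally divergent naive unfolded integral $\int_{\GmodH} F(z) E_i(z,s,0) d\mu(z)$: the left-hand side by subtracting $\psi_i(y)$ inside the strip at $\mathfrak{a}_i$, the right-hand side by decomposing the fundamental domain and subtracting the asymptotic $\psi_j(y) e_{ij}(y,s,0)$ at every cusp.

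First I would verify convergence of $R_i(F,s)$ for $\Re(s)$ in a suitable range: integrating against $dx$ extracts the zero Fourier coefficient of $F \circ \sigma_{\mathfrak{a}_i}$, whose leading asymptotic is $\psi_i(y)$ by hypothesis, so that the integrand decays rapidly as $y \to \infty$; at $y \to 0$, the $\Gamma$-invariance of $F$ maps $\sigma_{\mathfrak{a}_i}z$ into a neighborhood of another cusp where $F$ has only polynomial growth, integrable against $y^{s-2}$ for $\Re(s)$ large.

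In this region I would derive (\ref{rankin}) by a direct comparison. Split the strip at $y=Y$. On $y>Y$, the integrand of $R_i(F,s)$ is absolutely convergent for all $s$ and matches the $j=i$ cuspidal term on the right-hand side after writing $y^{s} = e_{ii}(y,s,0) - \phi_{ii}(s,0)y^{1-s}$, using that the non-zero Fourier coefficients of $E_i$ integrate to zero against $\psi_i(y)$ in $x$, and producing a $-\phi_{ii}(s,0)\widehat{\psi_i}(1-s,Y)$ piece from $\int_Y^{\infty}\psi_i(y)\phi_{ii}(s,0)y^{-s-1}dy$. On $y<Y$, the strip integral splits as $\int_0^Y\!\!\int_0^1 F(\sigma_{\mathfrak{a}_i}z) y^{s-2} dx\, dy - \int_0^Y \psi_i(y) y^{s-2}dy$, with the subtracted piece evaluating explicitly to $\widehat{\psi_i}(s,Y)$ by iterated integration. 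The remaining $\int_0^Y\!\!\int_0^1 F\, y^{s-2}$ is then refolded against $E_i(z,s,0)$ using $\Gamma_{\mathfrak{a}_i}$-invariance, distributed over $\mathcal{D}=\mathcal{D}_0 \cup \bigcup_j \sigma_{\mathfrak{a}_j}\mathcal{D}^Y$, and regularized at the cusps $j\neq i$ by inserting and subtracting $\psi_j(y) e_{ij}(y,s,0)$; the added-back pieces produce the $-\phi_{ij}(s,0)\widehat{\psi_j}(1-s,Y)$ contributions. Assembling yields (\ref{rankin}). Meromorphic continuation is then automatic because every term on the right-hand side is either entire away from poles of $E_i$ or an explicit rational function of $s$ with coefficients polynomial in $\log Y$ and $Y^{s-1}$. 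The functional equation $\mathcal{R}(F,s) = \Phi(s,0)\mathcal{R}(F,1-s)$ follows from the matrix functional equation $\mathcal{E}(z,s,0) = \Phi(s,0)\mathcal{E}(z,1-s,0)$ applied termwise in (\ref{rankin}), with $\Phi(s,0)\Phi(1-s,0)=I$ ensuring consistency of the explicit boundary contributions.

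The main obstacle will be careful tracking of the boundary terms. Each $\psi_j$ is a sum of terms $c_{jk}y^{\alpha_{jk}}\log^{n_{jk}}y/n_{jk}!$, so iterated integration of $y^{s+\alpha-2}\log^n y$ produces the alternating sum
\begin{equation*}
\widehat{\psi_i}(s,Y) = \sum_j c_{ij}\sum_{m=0}^{n_{ij}} \frac{(-1)^{n_{ij}-m}}{m!} \frac{Y^{s+\alpha_{ij}-1}\log^m Y}{(s+\alpha_{ij}-1)^{n_{ij}-m+1}},
\end{equation*}
and one must verify that the apparent poles at $s=1-\alpha_{ij}$ appearing in the various $\widehat{\psi_j}(s,Y)$ and $\widehat{\psi_j}(1-s,Y)$ terms cancel consistently on the right-hand side, so that the whole identity defines a meromorphic object whose only singularities come from those of $E_i(z,s,0)$. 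One must also justify the refolding/distributing steps above by working initially in a strip where all the individual pieces are absolutely convergent, and then extend to all of $\C$ by analytic continuation of the already-established formula.
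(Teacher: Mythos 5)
The paper offers no proof of this statement: it is quoted directly from Mizuno \cite[Th.~2]{Mizuno:2005a}, which in turn extends Zagier's Rankin--Selberg method for functions not of rapid decay \cite{Zagier:1981a} to congruence subgroups. Your sketch --- splitting the strip at $y=Y$, refolding the truncated integral against $E_i(z,s,0)$ over $\mathcal{D}=\mathcal{D}_0\cup\bigcup_j\sigma_{\mathfrak{a}_j}\mathcal{D}^Y$, evaluating $\int_0^Y\psi_i(y)y^{s-2}\,dy=\widehat{\psi_i}(s,Y)$ by iterated integration by parts, and deducing the continuation and the functional equation from that of the weight-$0$ Eisenstein vector --- is exactly the standard argument of the cited source, and is correct modulo the boundary-term bookkeeping you yourself flag.
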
 
We want to move the line of integration in \eqref{J2} to $\Re(s)=1/2$ 
and Theorem~\ref{theorem:RankinSelberg} plays a major role,  
as it allows to identify the relevant poles and to calculate the 
corresponding residues. 
By the above theorem, in particular by (\ref{rankin}), we infer 
\begin{align} 
R_1\big(|E & (z, 1/2+it, 1/2)|^2, s\big) = 
\int_{\mathcal{D}_0} |E(z, 1/2+it, 1/2)|^2 E_1(z, s, 0) d\mu(z) \nonumber \\ 
\label{rankinE}& + \sum_{j=1}^3 \int_{\mathcal{D}^Y} \Big(|E(\sigma_{{\mathfrak a}_j} z, 1/2+it, 1/2)|^2 
E_1(\sigma_{{\mathfrak a}_j} z, s, 0) - \psi_j(y) e_{1j} (y, s, 0)\Big) d\mu(z) \\ 
& - \widehat{\psi(s, Y)}, \nonumber 
\end{align} 
where 
\begin{align*} 
\widehat{\psi(s, Y)} &= \widehat{\psi_1}(s, Y) + \phi_{11}(s, 0) 
\widehat{\psi_1}(1-s, Y) + \frac{Y^{1-s}}{1-s} \phi_{12}(s, 0) 
|\phi_{12}(1/2+it, 1/2)|^2, \\ 
\widehat{\psi_1}(s, Y) &= \frac{Y^s}{s} \Big(1+|\phi_{11}(1/2+it, 1/2)|^2\Big) 
+ \frac{Y^{s-2it}}{s-2it} \, \phi_{11}(1/2+it, 1/2) \\ 
& \quad 
+ \frac{Y^{s+2it}}{s+2it} \, \overline{\phi_{11}(1/2+it, 1/2)}, \\ 
\psi_j(y) &= \big|\delta_{1j} y^{1/2+it} + \phi_{1j} (1/2+it, 1/2)y^{1/2-it}\big|^2, \ 
j=1, 2, \\ 
\psi_3(y) &= 0. 
\end{align*} 
Thus we easily see that we pick up residues at $s=1$ and at $ s= 1 \pm 2it $ 
when we shift the line of integration. The pole at $ s=1 $ is responsible for 
the contribution of the $ \log \abs{t} $-term in \eqref{againagain} as we will see. 
We, therefore, examine $ H(s) R_1(|E(z, 1/2+it, 1/2)|^2, s) $ at $s=1$. In 
order to determine the order of the pole at $ s=1 $ and its residue we use 
the Laurent expansion of $ H(s) $ and $ R_1(|E(z, 1/2+it, 1/2)|^2, s) $. 
The first two terms of (\ref{rankinE}) are easily understood because of the 
Eisenstein series which has simple poles at $s=1$ and no other poles
in $\Re(s)\geq 1/2$. In order to treat the last term of (\ref{rankinE}) we 
write 
\begin{align*} 
\frac{Y^{1-s}}{1-s} &= - \frac{1}{s-1} + \log Y + O(|s-1|), \\ 
\phi_{1j}(s, 0) &= \frac{1}{\vol{\GmodH}} \frac{1}{s-1} 
+ b_0^{1j} + O(|s-1|). 
\end{align*} 
These expansions and the fact that the 
scattering matrix $ \Phi(s, 1/2) = (\phi_{ij}(s, 1/2))_{1 \leq 
i, j \leq 2} $  is unitary for $\Re (s)=1/2$ (cf. \cite[Lemma 10.5]{Roelcke:1966a}) yield 
\begin{align*}  
\widehat{\psi(s, Y)} &= 
- \frac{1}{\vol{\GmodH}} \Big(1 + \sum_{j=1}^2 |\phi_{1j} 
(1/2+it, 1/2)|^2\Big) \frac{1}{(s-1)^2} \\ 
& \quad + \bigg(\Big(1 + \sum_{j=1}^2 |\phi_{1j} (1/2+it, 1/2)|^2\Big) 
\frac{\log Y}{\vol{\GmodH}} \\ 
& \quad 
- \Big(1 + |\phi_{11} (1/2+it, 1/2)|^2\Big) b_0^{11} 
- |\phi_{12} (1/2+it, 1/2)|^2 b_0^{12} \\ 
& \quad 
+ \frac{1}{\vol{\GmodH}} \frac{\overline{\phi_{11}}(1/2+it, 
1/2) Y^{2it} - \phi_{11}(1/2+it, 1/2) Y^{-2it}}{2it}\bigg) \frac{1}{s-1} + O(1) \\ 
&= 
- \frac{2}{\vol{\GmodH}} \frac{1}{(s-1)^2} 
+ \bigg(\frac{2 \log Y}{\vol{\GmodH}} 
- \Big(1 + |\phi_{11} (1/2+it, 1/2)|^2\Big) b_0^{11} \\ 
& \quad 
- |\phi_{12} (1/2+it, 1/2)|^2 b_0^{12} \\ 
& \quad 
+ \frac{1}{\vol{\GmodH}} \frac{\overline{\phi_{11}}(1/2+it, 
1/2) Y^{2it} - \phi_{11}(1/2+it, 1/2) Y^{-2it}}{2it}\bigg) \frac{1}{s-1} + O(1) .
\end{align*} 
Consequently we see that $ R_1(|E(z, 1/2+ir, 1/2)|^2, s) $ has a pole of 
order $ 2 $ in $ s=1 $. Furthermore, 
\begin{align}
\nonumber\res_{s=1} H(s) & R_1\big(|E(z, 1/2+it, 1/2)|^2, s\big) \\ 
\nonumber&= 
\Bigg(\frac{1}{\vol{\GmodH}} 
\bigg(- 2 \log Y + 
\int_{\mathcal{D}_0} |E(z, 1/2+it, 1/2)|^2 d\mu(z) \\ 
\label{res} 
& \quad 
\phantom{\Bigg(\frac{1}{\vol{\GmodH}} \bigg(\int} 
+ \sum_{j=1}^3 \int_{\mathcal{D}^Y} \Big(|E(\sigma_{\mathfrak a_j} z, 1/2+it, 1/2)|^2 
- \psi_j(y) \Big) d\mu(z) \\ 
\nonumber& \quad 
\phantom{\Bigg(\frac{1}{\vol{\GmodH}} \bigg(\int} 
-  \frac{\overline{\phi_{11}}(1/2+it, 1/2) Y^{2it} - \phi_{11}(1/2+it, 1/2) 
Y^{-2it}}{2it} 
\bigg) \\ 
\nonumber& \quad 
\phantom{\Bigg( \Bigg)} 
+ b_0^{11} 
+ \sum_{j=1}^2 |\phi_{1j} (1/2+it, 1/2)|^2 b_0^{1j}\Bigg) H(1) 
+ \frac{2 H'(1)}{\vol{\GmodH}} \\ 
\nonumber&= 
\Bigg(-\frac{1}{\vol{\GmodH}} 
\sum_{j=1}^2 {\phi_{1j}}'(1/2+it, 1/2) \overline{\phi_{1j}} (1/2+it, 1/2) \\ 
\nonumber& \quad 
\phantom{\Bigg( \Bigg)} 
+ b_0^{11} + \sum_{j=1}^2 |\phi_{1j} (1/2+it, 1/2)|^2 b_0^{1j}\Bigg) H(1) 
+ \frac{2 H'(1)}{\vol{\GmodH}},
\end{align} 
where we used the Maa\ss -Selberg relations (see
e.g.\ \cite[Lemma 11.2]{Roelcke:1966a}). 
For the remaining poles at $ s = 1 \pm 2it $ we obtain 
\begin{equation*} 
\res_{s=1 + 2it} H(s) R_1\big(|E(z, 1/2+it, 1/2)|^2, s\big) = 
H(1+2it) \phi_{11}(1+2it, 0) \overline{\phi_{11}(1/2+it, 1/2)} ,
\end{equation*} 
and this expression is of rapid decay as $ \abs{t} \rightarrow \infty
$. This follows from the following general facts: the entries of the 
scattering matrix of weight zero are uniformly bounded for $ \Re(s) 
\geq 1/2$, $\abs{\Im(s)}\geq 1$ (see e.g.\ \cite[p.~655]{Selberg:1989a}), 
$ \phi_{11}(1/2 \pm it, 1/2) $ is bounded since $\Phi(1/2+it,k)$ is unitary, 
and the rapid decay of $ H(1 \pm 2it) $. The same bound holds for the residue of 
$ H(s) R_1(|E(z, 1/2+it, 1/2)|^2, s) $ at $ s = 1-2it $. 
We now want to shift the line of integration in \eqref{J2}. To do this we need
to control the growth of the $R_1(|E(z, 1/2+it, 1/2)|^2, s)$ apart 
from knowing the residues. 
\begin{lemma} \label{shift} 
Let $F(z)=|E_1 (z, 1/2+it, 1/2)|^2$. The function $ 
R_1\big(F(z), \sigma + iv\big) 
$ 
is of at most polynomial growth as $ |v| \rightarrow \infty $ for $ \sigma \geq 
1/2 $.  
\end{lemma}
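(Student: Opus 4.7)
The plan is to reduce the growth of $R_1$ on vertical lines to the polynomial growth of the double Dirichlet series $Z(s,w,\chi,\chi')$ supplied by Theorem \ref{full-continuation}. Starting from the series representation \eqref{R1}, valid for $\Re(s)$ sufficiently large, and noting that on the line $\Re(w) = 1/2$ one has $\abs{\phi_n(w,1/2)}^2 = \phi_n(w,1/2) \phi_n(1-w,1/2)$, the sum splits into contributions from $n > 0$ and $n < 0$ which, after extracting the appropriate Gamma factors, match the definition of $\hat Z_\pm(s,1/2+it)$ given before Proposition \ref{hatZtosomething}. Thus $R_1(F, s)$ is a finite linear combination of terms of the form
\begin{equation*}
\hat Z_\pm(s, 1/2+it) \cdot \int_0^\infty \abs{W_{\pm 1/4, it}(2y)}^2 y^{s-1} \frac{dy}{y}
\end{equation*}
multiplied by explicit Gamma and power factors whose polynomial growth in $|v|$ is controlled by Stirling.

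Next, I would apply Proposition \ref{hatZtosomething} and Lemma \ref{ZhatZtilde} to express each $\hat Z_\pm(s,1/2+it)$ as a finite linear combination, with coefficients bounded in vertical strips, of values of the double Dirichlet series $Z_{\psi_\tau}$ for the divisor function case $t_n = \tau(n)$, evaluated at arguments linear in $s$ and $1/2+it$. By Theorem \ref{full-continuation}, $Z_{\psi_\tau}$ continues meromorphically to $\C^2$ with at most polynomial growth in bounded vertical regions, and the finitely many polar lines coming from \eqref{zeta-with-polar} are crossed in a bounded range of $\sigma$. After accounting for the auxiliary factor $U(s,1/2+it)$ of Proposition \ref{hatZtosomething} and the factor $\zeta_2(2s+2w-1)$ of Lemma \ref{ZhatZtilde}, which contribute at worst Dirichlet-polynomial and logarithmic losses on the relevant strip, this yields polynomial growth of $\hat Z_\pm(\sigma + iv, 1/2+it)$ in $|v|$ for $\sigma \geq 1/2$ away from the (finitely many) polar lines.

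For the Mellin integral I would proceed parallel to Lemma \ref{nicoleresult}, but applied to the squared Whittaker function rather than a product of two distinct Whittakers. The exponential decay of $W_{\pm 1/4, it}(y)$ as $y \to \infty$ and its explicit behavior near $y = 0$ make the Mellin transform absolutely convergent for $\Re(s) > 0$. Repeated integration by parts in $y$, using the Whittaker differential equation to absorb powers of $s$, yields polynomial (in fact arbitrarily fast) decay of the Mellin factor in $|v|$ for $\sigma \geq 1/2$.

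The main obstacle is bookkeeping through the chain $R_1 \to \hat Z_\pm \to Z_{\psi_\tau}$: verifying that the coefficients $\hat f_{\pm,\kappa}$ in Proposition \ref{hatZtosomething} and the auxiliary Dirichlet polynomials in $2^{-s}$ remain polynomially bounded on vertical strips, that the shifts in the arguments $(s+2w-1/2)/2$ and $(s-2w+3/2)/2$ keep the real parts in a bounded region so that Theorem \ref{full-continuation} applies, and that the Gamma-function quotients appearing in Proposition \ref{hatZtosomething} do not introduce exponential growth. Since only \emph{some} polynomial exponent is required, with no need for optimality, these are technical rather than conceptual difficulties, and the conclusion follows by combining the polynomial growth of the two factors in each term.
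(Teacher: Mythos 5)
Your argument is correct in outline, but it takes the other of the ``two ways'' the paper itself advertises just before Theorem~\ref{theorem:RankinSelberg}, and it is genuinely different from the proof given for this lemma. The paper never touches the double Dirichlet series here: it passes to $R^*_1(F,s)=\zeta(2s)R_1(F,s)$ to clear the poles coming from zeros of $\zeta$, reads off from \eqref{rankin} that $R^*_1(F,s)=O(1)$ for $\Re(s)>1$, applies Zagier's functional equation $\mathcal{R}(F,s)=\Phi(s,0)\mathcal{R}(F,1-s)$ with the explicit scattering entries to get $O(\abs{\Im s}^{1-2\sigma})$ for $\Re(s)<0$, and concludes by Phragm\'en--Lindel\"of --- a soft convexity argument needing only crude bounds on the weight-$0$ Eisenstein series and scattering matrix. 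You instead unfold \eqref{R1} into $\hat Z_\pm(s,1/2+it)$ times the Mellin transform of $\abs{W_{\pm 1/4,it}(2y)}^2$ and import polynomial growth from Theorem~\ref{full-continuation} via Proposition~\ref{hatZtosomething} and Lemma~\ref{ZhatZtilde}; this is essentially the computation the paper performs only later, and only on the line $\Re(s)=1/2$, in Lemma~\ref{estimateJ3}. Your chain does go through for all $\sigma\geq 1/2$: the arguments $(s+2w-1/2)/2$ and $(s-2w+3/2)/2$ have real part $(\sigma+1/2)/2$, which stays in a compact set; $U(s,1/2+it)^{-1}$ and the factor $1/\zeta_2(2s)$ produced by Lemma~\ref{ZhatZtilde} contribute at most $O(\log\abs{v})$ for $\sigma\geq 1/2$; the Mellin factor is in fact $O(1)$ uniformly in $v$ by absolute convergence for $\sigma>0$, so no integration by parts is needed; and the induced poles land exactly at $s=1$ and $s=1\pm 2it$, i.e.\ at bounded $\abs{v}$. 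Two points you should make explicit: (i) the identification of the meromorphically continued $R_1(F,\cdot)$ with the continued $\hat Z_\pm$-expression beyond the half-plane where \eqref{R1} converges (uniqueness of analytic continuation; the paper relies on the same identification tacitly in Lemma~\ref{estimateJ3}); and (ii) that the Gamma quotients in Proposition~\ref{hatZtosomething} depend only on $t$, not on $v$, so they cannot introduce exponential growth in the variable of the lemma. As for what each approach buys: the paper's is shorter and independent of the machinery of Section~\ref{sec:doubledirichletseries}, while yours unifies this lemma with Lemma~\ref{estimateJ3} and, if constants are tracked, also records the $t$-dependence that the Phragm\'en--Lindel\"of argument discards.
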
 
\begin{proof} 
In order to avoid the poles of the Eisenstein series coming from the zeros 
of the  zeta function in the critical strip we work with $ R^*_i(F, s) := 
\zeta(2s)R_i(F,s) $, $ i = 1, 2, 3 $. Then the function $ R^*_i(F, s) $ has 
only finitely many poles in the strip \mbox{$ 0 \leq \Re (s) \leq 1 $}. The 
estimates for the Eisenstein series and the scattering matrix imply that
\begin{equation*} 
R^*_i(F, s) = O(1) 
\end{equation*} 
as $ |\Im (s)| \rightarrow \infty $ for $ \Re (s) > 1 $, $ i = 1, 2, 3 $. 
Using the functional equation as well as explicit expressions for 
$ \phi_{1j} (s, 0) $ 
we then get
\begin{equation*} 
R^*_1(F,s) 
= \frac{\zeta(2s)}{\zeta(2(1-s))} \sum_{j=1}^3 \phi_{1j}(s,0) R^*_j(F,1-s) 
= O\big(|\Im (s)|^{1-2\sigma}\big) 
\end{equation*} 
as $ |\Im (s)| \rightarrow \infty $ for $ \sigma = \Re (s) < 0 $, $ i = 1, 
2, 3 $. Thus by the Phragm\'en-Lindel\"of principle we finally obtain that 
\begin{equation*} 
R_1(F, \sigma + iv) = O\big(|v|^k\big) 
\end{equation*} 
as $ |v| \rightarrow \infty $, $ \sigma \geq 1/2 $, for some $ k \in \N $. 
\end{proof} 
Now that polynomial growth has been established it follows then from (\ref{res}) that
\begin{align}
\nonumber 
J_2(t, \infty) &= 
\Bigg(-\frac{1}{\vol{\GmodH}} \sum_{j=1}^2 \frac{{\phi_{1j}}'}{\phi_{1j}}(1/2+it, 1/2) 
|{\phi_{1j}} (1/2+it, 1/2)|^2 + b_0^{11} \\ 
 \label{J2aftershift}& \quad 
\phantom{\Bigg( \Bigg) -\frac{1}{2 \pi}} 
+ \sum_{j=1}^2 |\phi_{1j} (1/2+it, 1/2)|^2 b_0^{1j}\Bigg) H(1) 
+ \frac{H'(1)}{\pi} \\ 
\nonumber& \quad \phantom{\Bigg( \Bigg)} + \frac{1}{2 \pi i} 
\int_{\Re s = 1/2} H(s) R_1\Big(|E(z, 1/2+it, 1/2)|^2, s\Big) ds 
+ O(1).
\end{align} 
In section \ref{Sec:weight-half-eisensteinseries} we saw that, 
up to constants and fractions of polynomials in powers of $ 2 $, the 
entries of the scattering matrix are equal to $ \xi(3-4s)/\xi(4s-1) $, 
see \eqref{scattering-explicit}. Hence in order to determine the 
asymptotic behavior of the first term in (\ref{J2aftershift}) with 
respect to the $ t $-variable we need to understand the logarithmic 
derivative of $ \xi(3-4s)/\xi(4s-1) $ at $ s=1/2 +it$. 
The contribution from the remaining terms is $ O(1) $.
We have 
\begin{align*} 
\bigg(\log \frac{\xi(3-4s)}{\xi(4s-1)}\bigg)'\Bigg|_{s=1/2+it} 
&= 
4 \log \pi - 2 \frac{\Gamma'}{\Gamma}\bigg(\frac{1}{2}-2it\bigg) - 
2 \frac{\Gamma'}{\Gamma}\bigg(\frac{1}{2}+2it\bigg) \\ 
& \quad 
- 4 \bigg(\frac{\zeta'}{\zeta}(1-4it) - \frac{1}{4it} 
+ \frac{\zeta'}{\zeta}(1+4it) + \frac{1}{4it}\bigg) \\ 
&= -4 \log |t| + o(\log |t|)  
\end{align*} 
by Stirling's formula and \cite[Theorem 5.17]{Titchmarsh:1986a}.
Since $ \Phi(s, \frac{1}{2}) $ is unitary for $ \Re s = \frac{1}{2} $, 
we finally arrive at
\begin{equation} 
J_2(t, \infty) = \frac{4 H(1)}{\vol{\GmodH}} \log \abs{t} + \frac{1}{2 \pi i} 
\int_{\Re s = 1/2} H(s) R_1\Big(|E(z, 1/2+it, 1/2)|^2, s\Big) ds 
+ o(\log \abs{t}) 
\end{equation} 
as $ \abs{t} \rightarrow \infty $. 
To treat the last integral we use again the connection to double
Dirichlet series.
\begin{lemma} \label{estimateJ3} 
Assume that for any $\chi,\chi'$ mod~$8$ the function $Z_{\psi_\tau}(s,1-s,\chi,\chi')$
  satisfies a subconvex bound with saving $\delta>0$. Then, as $\abs{t}\to\infty$, 
\begin{equation*}
\frac{1}{2 \pi i} 
\int_{\Re s = 1/2} H(s) R_1\Big(|E(z, 1/2+it, 1/2)|^2, s\Big) ds = o(1).
\end{equation*} 
\end{lemma}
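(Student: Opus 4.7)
The plan is to connect $R_1\bigl(|E(z,1/2+it,1/2)|^2, s\bigr)$ on the line $\Re(s)=1/2$ to the double Dirichlet series $Z_{\psi_\tau}$ via the chain of identities already developed, and then to invoke the assumed subconvex bound. Since the relevant Dirichlet $L$-values are real on the critical line, $|\phi_n(w,1/2)|^2=\phi_n(w,1/2)\phi_n(1-w,1/2)$ on $\Re(w)=1/2$. Splitting the sum in \eqref{R1} according to the sign of $n$ and factoring out the Mellin transforms of $|W_{\pm 1/4,it}(2y)|^2$, the arithmetic part is exactly $\hat Z_\pm(s,1/2+it)$ up to explicit Gamma factors.

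Proposition \ref{hatZtosomething} then rewrites $\hat Z_\pm(s,w)$ as a combination, bounded in vertical strips, of $\hat Z\bigl(\tfrac{s+2w-1/2}{2},\tfrac{s-2w+3/2}{2},\chi,\chi'\bigr)$, and Lemma \ref{ZhatZtilde} expresses each $\hat Z(A,B,\chi,\chi')$ as $\frac{1}{\zeta_2(2A+2B-1)}$ times a linear combination of $Z_{\psi_\tau}(A,B,\cdot,\cdot)$. At $s=1/2+iu$, $w=1/2+it$ one finds $A=1/2+i(u+2t)/2$, $B=1/2+i(u-2t)/2$, whence $A+B=1+iu$ and $A-B=2it$; the denominator $\zeta_2(1+iu)$ contributes only $O(\log(2+|u|))$ by standard bounds on $1/\zeta$ near the edge of the critical strip.

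When $u=0$ the pair $(A,B)$ lies exactly on the antidiagonal $B=1-A$ where the subconvex hypothesis yields $Z_{\psi_\tau}(A,B,\chi,\chi')=O\bigl(|t|^{1/2-2\delta+\varepsilon}\bigr)$. For $u\ne 0$, Phragm\'en--Lindel\"of interpolation between this estimate and the polynomial growth in $u$ established in Theorems \ref{first-functional-equation} and \ref{second-functional-equation} extends the bound uniformly to $O\bigl((1+|t|)^{1/2-2\delta+\varepsilon}(1+|u|)^{C}\bigr)$ for some absolute $C$. Combining with an analogue of Lemma \ref{estimateM} for the Mellin transform of $|W_{\pm 1/4,it}(2y)|^2$ together with the Schwartz decay of $H(1/2+iu)$ in $u$, the integrand on $\Re(s)=1/2$ is dominated by a function integrable in $u$ and of size $O(|t|^{-\eta})$ for some $\eta>0$, yielding $o(1)$ as $|t|\to\infty$.

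The main obstacle is the quantitative analysis of the Whittaker Mellin transform $\int_0^\infty|W_{\pm 1/4,it}(2y)|^2\,y^{s-1}\,dy/y$ in the joint regime $|t|\to\infty$ with $|u|$ bounded: one must verify, via a Barnes-type representation, that the exponential growth in $|t|$ coming from individual Gamma factors cancels, and that the remaining polynomial bound combines with the subconvex saving $\delta$ to produce a genuine power saving in $|t|$. The Phragm\'en--Lindel\"of step off the antidiagonal is standard but must be executed carefully, as the trivial bound from Theorem \ref{convexity-independent} for $u$ away from $0$ grows polynomially in $|u|$ and must be absorbed by the rapid decay of $H$.
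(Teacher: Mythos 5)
Your route is the paper's route: rewrite \eqref{R1} using $\abs{\phi_n(w,1/2)}^2=\phi_n(w,1/2)\phi_n(1-w,1/2)$ on $\Re(w)=1/2$, pass to $\hat Z_\pm(s,1/2+it)$, apply Proposition \ref{hatZtosomething} (with $U(s,w)^{-1}=O(1)$) and Lemma \ref{ZhatZtilde} to land on $Z_{\psi_\tau}$ evaluated at a point with real parts $1/2$ and imaginary parts proportional to $u\pm 2t$, absorb the $1/\zeta_2(1+2iu)$ by $O(\log(2+\abs{u}))$, use Lemma \ref{estimateM} for the factor $\abs{t}^{-1/2+\varepsilon}$ (this is already proved in the appendix, so your ``main obstacle'' is not an obstacle), and integrate against the Schwartz-decaying $H(1/2+iu)$. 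All of that matches, and the final bookkeeping $\abs{t}^{-1/2+\varepsilon}\cdot\abs{t}^{2(1/4-\delta)}=o(1)$ is correct.

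The genuine gap is your Phragm\'en--Lindel\"of step. For $u\neq 0$ the point at which $Z_{\psi_\tau}$ must be bounded is off the antidiagonal $w=1-s$, but it still has $\Re(s)=\Re(w)=1/2$. The antidiagonal $u=0$ is a real one-parameter curve inside the critical set, not the boundary of a strip in any holomorphic parameter: moving $u$ through real values changes only the imaginary parts of the two arguments, whereas Phragm\'en--Lindel\"of (and every convexity argument in this paper, e.g.\ Lemma \ref{approximate-functional-equation}) interpolates across variations of the \emph{real} parts. There is no complex-analytic mechanism that transfers a bound at $u=0$ to a bound at $u=1$, and if there were, the antidiagonal subconvexity hypothesis would automatically imply subconvexity for all configurations --- which the authors explicitly treat as a strictly stronger assumption (see the remark in the introduction that the incomplete Eisenstein series contribution needs the bound ``for all configuration of $s$ and $w$'', in contrast to the cuspidal contribution in Theorem \ref{cuspidalcontribution}, which only needs $s=1-w$). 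What the paper actually does at this step is simply invoke subconvexity at the general point, i.e.\ the bound $O\big(((1+\abs{u+2t})(1+2\abs{u})^2(1+\abs{u-2t}))^{1/4-\delta}\big)$, relying on the blanket hypothesis of Theorem \ref{QUE}; the polynomial growth in $u$ is then killed by the rapid decay of $H$. You need to assume this off-antidiagonal bound as an input --- it cannot be manufactured from the $u=0$ case by interpolation.
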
 
\begin{proof} 
By \eqref{R1}, Proposition~\ref{hatZtosomething} combined with
$U(s,w)^{-1}=O(1)$ when $\Re(s)=\Re(w)=1/2$, Lemma~\ref{ZhatZtilde},  
Stirling's formula, Lemma~\ref{estimateM} and $1/\zeta(1+it)=O(\log |t|)$ we find that 
\begin{align*} 
R_1\Big(|E(z, 1/2+it, & 1/2)|^2, \, 1/2+iu\Big) = \\ 
& 
O\left(\abs{t}^{-1/2+\varepsilon} \max_{\chi,\chi'} \abs{Z_{\psi_\tau}(\frac{1}{2}+i(u+2t), 
\frac{1}{2}+i(u-2t),\chi,\chi')}\right). 
\end{align*} 
Subconvexity implies that the $\max$ is 
$$ O\Big(\big((1+\abs{u+2t})( 1+\abs{u-2t})(1+2\abs{u})^2\big)^{1/4-\delta}\Big). $$

Using the rapid decay of $H(s)$ 
we finally obtain that
\begin{equation*} 
J_3(t, \infty) = O\Big(|t|^{-1/2+\varepsilon} |t|^{2(1/4-\delta)}\Big) 
= o(1).
\end{equation*}
\end{proof}

\begin{remark} 
In the above proof we see that, as in the cuspidal case, the 
trivial bound from Theorem~\ref{convexity-independent} only gives 
$O(\abs{t}^{1/2+\varepsilon})$. However, for a compact set $ A $ 
the Maa\ss -Selberg relations easily yield
\begin{equation*} 
\int_A |E(z, 1/2+it, 1/2)|^2 d\mu(z) = O(\log t). 
\end{equation*} 
\end{remark} 
 
To summarize we have proved:
\begin{theorem} \label{incompleteeisensteincontribution} 
Assume that for any $\chi,\chi'$ mod~$8$ the function $Z(s,1-s,\chi,\chi')$
satisfies a subconvex bound. Then, as $\abs{t}\to\infty$, 
\begin{equation*}
\int_{\Gamma\setminus\H} F_h(z)\abs{E_\infty(z,1/2+it,1/2)}^2 d\mu(z) 
= \frac{4}{\vol{\GmodH}}H(1)\log \abs{t}+o(\log\abs t). 
\end{equation*}
\end{theorem}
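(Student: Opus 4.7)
The plan is essentially to collect the computations developed in the preceding pages of this section, since the main ingredients — the unfolding, the Rankin--Selberg transform, the residue computation at $s=1$, and the estimate on the shifted contour — have all been set up.

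First I would unfold the incomplete Eisenstein series, writing
\begin{equation*}
\int_{\GmodH} F_h(z,\infty)\abs{E(z,1/2+it,1/2)}^2 d\mu(z)
= J_1(t,\infty)+J_2(t,\infty),
\end{equation*}
with $J_1, J_2$ as in \eqref{J1a} and \eqref{J2a}. From \eqref{J1} one directly reads off $J_1(t,\infty)=O(1)$, using the unitarity of $\Phi(1/2+it,1/2)$ and the rapid decay of $H$ on vertical lines.

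For $J_2(t,\infty)$ I would use Mellin inversion and the Rankin--Selberg expression \eqref{J2} to write it as a contour integral of $H(s) R_1(|E(z,1/2+it,1/2)|^2,s)$ over $\Re(s)=a>1$. Using Theorem~\ref{theorem:RankinSelberg} together with Lemma~\ref{shift}, which gives polynomial growth on vertical lines, I would shift the contour to $\Re(s)=1/2$. Only the poles at $s=1$ and $s=1\pm 2it$ are crossed. The residues at $1\pm 2it$ are of rapid decay in $t$ by the bounds on $\Phi(s,0)$ on $\Re(s)=1$ away from $s=1$ and the rapid decay of $H$. The residue at $s=1$ was computed in \eqref{res}; substituting the explicit scattering matrix \eqref{scattering-explicit} and applying Stirling and \cite[Theorem~5.17]{Titchmarsh:1986a} to the logarithmic derivative of $\xi(3-4s)/\xi(4s-1)$ at $s=1/2+it$ yields the leading contribution $\frac{4H(1)}{\vol{\GmodH}}\log\abs{t}$, all other terms being $o(\log\abs{t})$.

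What remains is the shifted contour integral over $\Re(s)=1/2$. Here I would invoke Lemma~\ref{estimateJ3}, which is exactly where the subconvexity hypothesis is used: Proposition~\ref{hatZtosomething} (together with Lemma~\ref{ZhatZtilde} to reduce $\hat Z$ to $Z_{\psi_\tau}$) expresses $R_1(|E(z,1/2+it,1/2)|^2, 1/2+iu)$ in terms of $Z(\tfrac12+i(u+2t),\tfrac12+i(u-2t),\chi,\chi')$, on which the hypothesis furnishes a bound $\mathfrak q^{1/4-\delta+\varepsilon}$; combined with the $\abs{t}^{-1/2+\varepsilon}$ factor coming from the Mellin transform of the Whittaker product (Lemma~\ref{estimateM}) this contour integrates to $o(1)$ after exploiting the rapid decay of $H$ in $u$. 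Adding $J_1$, the main term from the residue at $s=1$, and this $o(1)$ gives the stated asymptotic.

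The only genuine analytic obstacle is the last step: controlling the contour on $\Re(s)=1/2$ requires a genuine subconvex saving, since the trivial bound of Theorem~\ref{intro-convexity-bound} would only yield $O(\abs{t}^{1/2+\varepsilon})$, which is not sufficient. Everything else is either an explicit residue computation, a standard Stirling/zeta estimate, or an application of results already established earlier in the paper.
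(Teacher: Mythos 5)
Your proposal is correct and follows essentially the same route as the paper: the decomposition into $J_1+J_2$, the contour shift justified by Theorem~\ref{theorem:RankinSelberg} and Lemma~\ref{shift}, the residue computation at $s=1$ evaluated asymptotically via the logarithmic derivative of $\xi(3-4s)/\xi(4s-1)$ and unitarity of $\Phi(1/2+it,1/2)$, and the shifted integral on $\Re(s)=1/2$ controlled by Lemma~\ref{estimateJ3} under the subconvexity hypothesis are exactly the steps carried out in Section~\ref{sec:main-theorem}. You also correctly identify that the only place the hypothesis is genuinely needed is the $\Re(s)=1/2$ contour, where the trivial bound would give only $O(\abs{t}^{1/2+\varepsilon})$.
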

The asymptotics \eqref{againagain} and hence Theorem~\ref{QUE} now
follows from Theorems~\ref{cuspidalcontribution} and
\ref{incompleteeisensteincontribution} by an approximation argument 
as in \cite[p.~217]{LuoSarnak:1995a}.

\section{Appendix: Mellin transforms of products of Whittaker
  functions}
In this appendix we prove various bounds on Mellin transforms of
products of Whittaker functions that we have not been able to find in
the literature in the generality needed.
\begin{lemma} \label{nicoleresult} 
Let $\parity\in\{\pm 1\}$. For $s=1/2+it$, $w= 1-s$, and $s_0$ fixed, we have the following bound
$$ 
\frac{1}{\Gamma(w +\parity/4)} \int_{0}^\infty W_{0,{s_0-1/2}}(y) 
W_{\parity/4,w-1/2}(y) y^{s-1} \frac{dy}{y} = O((1+\abs{t})^{-1/2}) ,
$$ 
as $\abs{t}\to\infty$.
\end{lemma}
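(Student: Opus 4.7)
The plan is to expand $W_{\parity/4,\,w-1/2}(y)$ (with $w-1/2=-it$) as a linear combination of Whittaker functions of the first kind via the standard connection formula
\[
W_{\parity/4,-it}(y) = \frac{\Gamma(2it)}{\Gamma(1/2+it-\parity/4)}\,M_{\parity/4,-it}(y) + \frac{\Gamma(-2it)}{\Gamma(1/2-it-\parity/4)}\,M_{\parity/4,it}(y),
\]
and to evaluate each of the two resulting Mellin integrals
\[
\mathcal{I}_\epsilon(t) := \int_0^\infty W_{0,s_0-1/2}(y)\,M_{\parity/4,\epsilon it}(y)\, y^{s-2}\,dy,\qquad\epsilon\in\{\pm1\},
\]
in closed form. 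Using $W_{0,s_0-1/2}(y)=\sqrt{y/\pi}\,K_{s_0-1/2}(y/2)$, the Kummer series $M_{\parity/4,\mu}(y)=y^{1/2+\mu}e^{-y/2}\sum_{k\ge 0}\tfrac{(1/2+\mu-\parity/4)_k}{(1+2\mu)_k\,k!}\,y^k$, and the Bessel--Kummer identity
\[
\int_0^\infty e^{-y/2}\,K_{s_0-1/2}(y/2)\,y^{\sigma-1}\,dy \;=\; \sqrt{\pi}\,\frac{\Gamma(\sigma+s_0-1/2)\,\Gamma(\sigma-s_0+1/2)}{\Gamma(\sigma+1/2)},
\]
one integrates term-by-term and resums to obtain
\[
\mathcal{I}_\epsilon(t) = \frac{\Gamma\!\bigl(s_0+(1+\epsilon)it\bigr)\,\Gamma\!\bigl(1-s_0+(1+\epsilon)it\bigr)}{\Gamma\!\bigl(1+(1+\epsilon)it\bigr)}\, F_\epsilon(t),
\]
where $F_\epsilon(t)$ is a ${}_3F_2(\cdots;1)$ whose parameters depend linearly on $t$ with bounded shifts.

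After combining the two pieces and dividing by $\Gamma(w+\parity/4)=\Gamma(1/2+\parity/4-it)$, I would apply Stirling to each Gamma factor. For $\epsilon=-1$ the Gamma prefactor collapses to the $t$-independent constant $\Gamma(s_0)\Gamma(1-s_0)$, while the external ratio $\Gamma(2it)/\bigl(\Gamma(1/2+it-\parity/4)\Gamma(1/2+\parity/4-it)\bigr)$ simplifies via the reflection identity $\Gamma(z)\Gamma(1-z)=\pi/\sin(\pi z)$ and Stirling to $O(|t|^{-1/2})$: the three exponential decay rates $e^{-\pi|t|}$, $e^{-\pi|t|/2}$, $e^{-\pi|t|/2}$ cancel exactly. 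For $\epsilon=+1$ the Gamma prefactor is already of order $|t|^{-1/2}e^{-\pi|t|}$, so the corresponding contribution is $O(|t|^{-1}e^{-\pi|t|})$, which is negligible compared to the main term.

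The main obstacle is verifying that the hypergeometric factor $F_{-1}(t)$, with numerator parameters $(1/2-it-\parity/4,\,s_0,\,1-s_0)$ and denominator parameters $(1-2it,\,1)$, stays $O(1)$ uniformly in $t$. Absolute convergence at $z=1$ is guaranteed by the convergence condition $\Re(d+e-a-b-c)=1/2+\parity/4>0$. To bound the sum I plan to split it at $k\sim|t|$: for $k<|t|$ the large-parameter Pochhammer ratio $(1/2-it-\parity/4)_k/(1-2it)_k$ is close to $2^{-k}$, so this head is dominated term-by-term by the expansion of $\,{}_2F_1(s_0,1-s_0;1;1/2)$, which converges; for $k\gtrsim|t|$ this ratio is close to $1$ and the tail is uniformly summable thanks to the convergence criterion. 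Should the splitting argument prove too delicate, the fallback is to bypass the hypergeometric representation and instead shift a Mellin--Barnes contour for one of the Whittaker functions, reducing the problem to a Barnes integral of Gamma functions that can be bounded directly by Stirling.
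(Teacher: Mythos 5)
Your overall route coincides with the paper's: expanding $W_{\parity/4,-it}$ in the two $M$-functions and integrating termwise against $W_{0,s_0-1/2}(y)=\sqrt{y/\pi}\,K_{s_0-1/2}(y/2)$ just re-derives the evaluation \cite[7.611 7.]{GradshteynRyzhik:2007a} that the paper quotes, and your Stirling bookkeeping for the Gamma prefactors (main term of size $|t|^{-1/2}$ from $\epsilon=-1$, exponentially small contribution from $\epsilon=+1$) is the same computation. The difference, and the problem, is in the one step that carries the real content: the uniform bound on the hypergeometric factor.

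Your tail estimate for $F_{-1}(t)={}_3F_2(1/2-\parity/4-it,s_0,1-s_0;1-2it,1;1)$ does not work as stated. On the range $k\gtrsim|t|$ you replace the Pochhammer ratio $(1/2-\parity/4-it)_k/(1-2it)_k$ by something "close to $1$" and appeal to the convergence criterion; but the resulting majorant is the tail of ${}_2F_1(s_0,1-s_0;1;1)$, whose terms are $\asymp 1/k$, so it \emph{diverges}. The parameter-excess condition $\Re(d+e-a-b-c)=1/2+\parity/4>0$ guarantees absolute convergence of the full ${}_3F_2$ for each fixed $t$, but it is exactly the $t$-dependent factor $(1/2-\parity/4-it)_k/(1-2it)_k$, with its genuine decay $\ll_t k^{-1/2-\parity/4}$ for large $k$, that produces this excess; once you discard that decay you have nothing left to sum against. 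The paper's fix is a clean termwise domination: insert $(1+\epsilon)_k/(1)_k$ and check that $\bigl|(1+\epsilon)_k(1/2-\parity/4-it)_k/\bigl((1)_k(1-2it)_k\bigr)\bigr|\le 1$ for every $k$ and every $t$, so the whole series is majorized by ${}_2F_1(s_0,1-s_0;1+\epsilon;1)$, which converges absolutely because the excess is $\epsilon>0$. A second, smaller gap: your estimate for the $\epsilon=+1$ piece silently assumes $F_{+1}(t)=O(1)$, but $F_{+1}$ has \emph{three} numerator parameters of size $|t|$ against denominators $1+2it,1+2it$, and its boundedness is not evident termwise; the paper first applies a Thomae--Bailey transformation \cite[p.~18]{Bailey:1964a} to convert it into a Gamma ratio times a ${}_3F_2$ of the same shape as $F_{-1}$, and only then bounds it. Both gaps are repairable, but as written the proof of the key uniform bound is not valid.
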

\begin{remark}\label{order-of-growth}
The estimate in Lemma \ref{nicoleresult} cannot be improved, as the
proof below shows that the estimate can be turned into an asymptotic 
rate of decay of the same order.
\end{remark}
\begin{proof} 
Using \cite[7.611 7., p.~821]{GradshteynRyzhik:2007a}, we obtain 
\begin{align} 
\nonumber \int_0^{\infty} W_{0,s_0-1/2}& (y) W_{ 
\frac{\parity}{4},w-1/2}(y) y^{s-1} \frac{dy}{y} \\ 
\nonumber&= \frac{\Gamma(s+w-s_0) \Gamma(s+w+s_0-1) 
\Gamma(1-2w)}{\Gamma(1-\frac{\parity}{4}-w) \Gamma(s+w)} 
\\ 
\label{nmellin}  & \quad \times 
{}_3F_2(s+w-s_0,s+w+s_0-1,w-\frac{\parity}{4};2w, s+w;1) \\ 
\nonumber& \quad + \frac{\Gamma(s-w+s_0) \Gamma(s-w-s_0+1) 
\Gamma(2w-1)}{\Gamma(w-\frac{\parity}{4}) \Gamma(s-w+1)} \\ 
\nonumber & \quad \times 
{}_3F_2(s-w+s_0,s-w-s_0+1,1-\frac{\parity}{4}-w;2-2w,s-w+1;1)  ,
\end{align} 
if $ |\Re(s_0-1/2)|+|\Re(w-1/2)|<\Re s $. 
The generalized hypergeometric series that appear in (\ref{nmellin}) 
converge for $ \Re s < 1+\frac{\parity}{4} $. 
We now set $ s=1/2+it $ and $ w=1/2-it $ and get 
\begin{align*} 
\int_0^{\infty} & W_{0,s_0-1/2}(y) W_{ 
\frac{\parity}{4},-it}(y) y^{s-1} \frac{dy}{y} \\ 
&= 
\frac{\Gamma(1-s_0) \Gamma(s_0) \Gamma(2it)}{\Gamma(\frac{1}{2}-
\frac{\parity}{4}+it) \Gamma(1)} 
{}_3F_2(1-s_0,s_0,\frac{1}{2}-\frac{\parity}{4}-it;1-2it,1;1) \\ 
& \quad + \frac{\Gamma(s_0+2it) \Gamma(1-s_0+2it) 
\Gamma(-2it)}{\Gamma(\frac{1}{2}-\frac{\parity}{4}-it) \Gamma(1+2it)} \\ 
& \quad \qquad \times 
{}_3F_2(s_0+2it,1-s_0+2it,\frac{1}{2}-\frac{\parity}{4}+it;1+2it,1+2it;1). 
\end{align*} 
Using \cite[p.~18]{Bailey:1964a}, we infer that 
(see also \cite[(2.9), p.~1491]{Jakobson:1994a}) 
\begin{align*} 
{}_3F_2(s_0+2it,&1-s_0+2it, \frac{1}{2}-\frac{\parity}{4}+it;1+2it,1+2it;1) \\ 
&= 
 \frac{\Gamma(\frac{1}{2}+
\frac{\parity}{4}-it) \Gamma(1+2it)}{\Gamma(\frac{1}{2}+\frac{\parity}{4} 
+it) \Gamma(1)}  
{}_3F_2(\frac{1}{2}-\frac{\parity}{4}+it, 1-s_0, s_0; 1+2it,1;1), 
\end{align*} 
and thus
\begin{align} 
\nonumber \int_0^{\infty} W&_{0,s_0-1/2}(y) W_{ 
\frac{\parity}{4},-it}(y) y^{s-1} \frac{dy}{y} \\ 
\label{nmn} &= \frac{\Gamma(1-s_0) \Gamma(s_0) \Gamma(2it)}{\Gamma(\frac{1}{2} 
-\frac{\parity}{4}+it)} 
{}_3F_2(1-s_0,s_0,\frac{1}{2}-\frac{\parity}{4}-it;1-2it,1;1) \\ 
\nonumber & \quad 
+ \frac{\Gamma(s_0+2it) \Gamma(1-s_0+2it)\Gamma(-2it) 
\Gamma(\frac{1}{2}+\frac{\parity}{4}-it)}{\Gamma(\frac{1}{2} 
-\frac{\parity}{4}-it) \Gamma(\frac{1}{2}+\frac{\parity}{4}+it)} \\ 
\nonumber& \quad \qquad \qquad \qquad \qquad \qquad \qquad \times 
{}_3F_2(1-s_0, s_0, \frac{1}{2}-\frac{\parity}{4}+it;1+2it,1;1). 
\end{align} 
We want to understand the asymptotic behavior of the hypergeometric 
series appearing in (\ref{nmn}). Since $ \Re (s)=1/2<1+\parity/4$ the hypergeometric series in 
(\ref{nmn}) converge absolutely. Moreover, the only difference between the 
two series is the sign of $it$ so that it suffices to treat the first 
series. The treatment of the second hypergeometric series appearing in 
(\ref{nmn}) is similar. Using the series representation for $ {}_3F_2 $ 
we see that 
\begin{equation} \label{nseries}
{}_3F_2(s_0,1-s_0,\frac{1}{2}-\frac{\parity}{4}-it;1-2it,1;1)= 
\sum_{n=0}^\infty \frac{(s_0)_n (1-s_0)_n (\frac{1}{2}-\frac{\parity}{4} 
-it)_n}{(1)_n (1-2it)_n} \frac{1}{n!}. 
\end{equation} 
In order to determine its asymptotic behavior as $ \abs{t} \rightarrow \infty $ 
we want to interchange the summation with the limit, i.e.\  we want to 
take the limit $ \abs{t} \rightarrow \infty $ in each term of the series separately. 
For this let $ \epsilon \in (0;1/4) $ be sufficiently small and rewrite the 
terms appearing in (\ref{nseries}) as 
\begin{equation*} 
\abs{\frac{(s_0)_n (1-s_0)_n (\frac{1}{2}-\frac{\parity}{4}-it)_n}{(1)_n 
(1-2it)_n}} = 
\abs{\frac{(s_0)_n (1-s_0)_n}{(1+\epsilon)_n} }\abs{\frac{(1+\epsilon)_n 
(\frac{1}{2}-\frac{\parity}{4}-it)_n}{(1)_n (1-2it)_n}}. 
\end{equation*} 
For $ 0 \leq l \leq n $ we have 
\begin{align*} 
& \abs{\frac{(l+1+\epsilon)(l+\frac{1}{2}-\frac{\parity}{4}-it)}{(l+1) 
(l+1-2it)}}^2 = \\ 
& \hspace{25 ex} 
\frac{(l^2+(\frac{3}{2}-\frac{\parity}{4}+\epsilon)l+(1+\epsilon)(\frac{1}{2} 
-\frac{\parity}{4}))^2+t^2(l+1+\epsilon)^2}{(l+1)^4+4t^2(l+1)^2}. 
\end{align*} 
Since $ 2(l+1)>l+1+\epsilon $ and 
\begin{equation} \label{condition1/2} 
0 \leq l^2+\bigg(\frac{3}{2}-\frac{\parity}{4}+\epsilon\bigg)l+(1+\epsilon)
\bigg(\frac{1}{2}-\frac{\parity}{4}\bigg) \leq (l+1)^2 ,
\end{equation} 
this implies that 
\begin{equation*} 
\abs{\frac{(s_0)_n (1-s_0)_n (\frac{1}{2}-\frac{\parity}{4}-it)_n}{(1)_n 
(1-2it)_n}} \leq   \abs{\frac{(s_0)_n (1-s_0)_n}{(1+\epsilon)_n} }
\end{equation*} 
for all $ n \geq 0 $. 
Furthermore, the hypergeometric series 
\begin{equation*} 
{}_2F_1(s_0,1-s_0;1+\epsilon;1) = 
\sum_{n=0}^{\infty} \frac{(s_0)_n (1-s_0)_n}{(1+\epsilon)_n} \frac{1}{n!} 
\end{equation*} 
converges absolutely and therefore, by the theorem of majorized convergence, we finally 
obtain 
\begin{equation*} 
\lim_{\abs{t} \rightarrow \infty} \hyp(s_0,1-s_0,\frac{1}{2}-\frac{\parity}{4}-it; 
1-2it,1;1) = {}_2F_1(s_0,1-s_0;1;1/2). 
\end{equation*} 
Thus only the Gamma factors appearing in (\ref{nmn}) determine the asymptotic 
behavior and using Stirling's formula we see that 
\begin{equation*} 
\int_0^{\infty} W_{0,s_0-1/2}(y) W_{ 
\frac{\parity}{4},-it}(y) y^{s-1} \frac{dy}{y} 
= O\big(\abs{t}^{-(\frac{1}{2}-\frac{\parity}{4})} e^{-\frac{\pi}{2} \abs{t}}\big) 
\end{equation*} 
as $ |t| \rightarrow \infty $. This implies the desired bound.
\end{proof} 

\begin{lemma} \label{estimate_hypergeometric} 
Let  $ \parity \in \{\pm 1\} $. We have 
\begin{displaymath} 
{}_3F_2 (1/2+\parity/4-it, 1/2+iu, 1/2-iu; 1, 1-2it; 1) \ll 
e^{\pi|u|} |u|^{-2\epsilon} ,
\end{displaymath} 
as $ \abs{u} \rightarrow \infty $, where the implied constant does not 
depend on $ t $. Furthermore, there exists a constant $ C $ independent 
of $ t $ such that 
\begin{equation*} 
{}_3F_2 (1/2+\parity/4-it, 1/2, 1/2; 1, 1-2it; 1) \leq C. 
\end{equation*} 
\end{lemma}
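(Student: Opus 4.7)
The plan is to prove both bounds by applying the same term-by-term majorization strategy used in the proof of Lemma~\ref{nicoleresult}. Writing out the series representation of the hypergeometric, for a small $\epsilon>0$ and either of the two choices of $(b,c)$ appearing in the lemma we factor each coefficient as
\begin{equation*}
\frac{(\tfrac{1}{2}+\parity/4-it)_n (b)_n (c)_n}{(1)_n (1-2it)_n\, n!} = \frac{(b)_n (c)_n}{(1+\epsilon)_n\, n!} \cdot \frac{(1+\epsilon)_n (\tfrac{1}{2}+\parity/4-it)_n}{(1)_n (1-2it)_n}.
\end{equation*}
First I would verify that the second factor is at most $1$ in absolute value, uniformly in $n$ and $t$. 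By computing moduli this reduces to the two elementary inequalities $(l+1+\epsilon)^2 \leq 4(l+1)^2$ and $(l+1+\epsilon)(l+\tfrac{1}{2}+\parity/4) \leq (l+1)^2$ for all integers $l \geq 0$, which are the analogue of \eqref{condition1/2} with $-\parity/4$ replaced by $+\parity/4$; a brief case check in each sign of $\parity$ shows they hold for any $\epsilon \in (0,1/4)$.

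Granted that bound, the second statement of the lemma is immediate: with $(b,c)=(1/2,1/2)$ the majorant becomes the $_2F_1$ at $1$ whose value is given, by Gauss's summation theorem (applicable since $1+\epsilon-1/2-1/2=\epsilon>0$), by
\begin{equation*}
{}_2F_1\!\bigl(\tfrac{1}{2},\tfrac{1}{2};1+\epsilon;1\bigr) = \frac{\Gamma(1+\epsilon)\Gamma(\epsilon)}{\Gamma(\tfrac{1}{2}+\epsilon)^2},
\end{equation*}
a constant independent of $t$. This yields the uniform bound with $C$ equal to this explicit constant.

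For the first statement the same factorization is used with $(b,c)=(\tfrac{1}{2}+iu,\tfrac{1}{2}-iu)$. Since $(\tfrac{1}{2}+iu)_n(\tfrac{1}{2}-iu)_n = \prod_{k=0}^{n-1}((k+\tfrac{1}{2})^2+u^2)>0$, the majorant is a positive real series, and Gauss's theorem again applies to give
\begin{equation*}
{}_2F_1\!\bigl(\tfrac{1}{2}+iu,\tfrac{1}{2}-iu;1+\epsilon;1\bigr) = \frac{\Gamma(1+\epsilon)\Gamma(\epsilon)}{\Gamma(\tfrac{1}{2}+\epsilon+iu)\Gamma(\tfrac{1}{2}+\epsilon-iu)}.
\end{equation*}
Stirling's formula produces $|\Gamma(\tfrac{1}{2}+\epsilon+iu)\Gamma(\tfrac{1}{2}+\epsilon-iu)| \sim 2\pi\, |u|^{2\epsilon} e^{-\pi|u|}$ as $|u|\to\infty$, which yields the claimed bound $\ll e^{\pi|u|}|u|^{-2\epsilon}$ with an implied constant independent of $t$.

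The main obstacle is the algebraic verification of the second-factor bound; the rest is a direct application of Gauss's summation theorem and Stirling. One must be careful that the change of sign relative to \eqref{condition1/2} (the shift is now $+\parity/4$ rather than $-\parity/4$) does not spoil the inequality, which it does not for $\epsilon$ chosen small enough.
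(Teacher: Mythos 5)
Your proposal is correct and follows essentially the same route as the paper: the same factorization of the Pochhammer ratio with an auxiliary $1+\epsilon$, the same term-by-term majorization by ${}_2F_1(\tfrac12+iu,\tfrac12-iu;1+\epsilon;1)$, Gauss's summation theorem, and Stirling. Your explicit re-verification of the elementary inequalities with the shift $+\parity/4$ (rather than $-\parity/4$ as in \eqref{condition1/2}) is a point the paper passes over with ``as before,'' and your check that $\epsilon\in(0,1/4)$ suffices is accurate.
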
 
\begin{proof} 
Since $ \Re (2-2it - (1+1/2+\parity/4-it))  > 0 $ 
the hypergeometric series $ {}_3F_2 (1/2+\parity/4-it, 1/2+iu, 1/2-iu; 
1, 1-2it; 1) $ converges. 
By the definition of the hypergeometric series we have 
\begin{displaymath} 
{}_3F_2 (\frac{1}{2}+\frac{\parity}{4}-it, s, 1-s; 1, 1 -2it; 1) = 
1 + \sum_{m = 1}^{\infty} \frac{\left(s\right)_m 
\left(1-s\right)_m}{(1)_m m!} \frac{\left(\frac{1}{2}+\frac{\parity}{4} 
-it\right)_m}{(1 -2it)_m} 
\end{displaymath} 
with $ s = \frac{1}{2} + iu $. We now determine the behavior of the series 
as $ \abs{u}\to \infty $. We use the same argumentation that 
was already useful in the proof of Lemma \ref{nicoleresult}. We write 
\begin{equation} \label{est1} 
\frac{\left(s\right)_m \left(1-s\right)_m}{(1)_m m!} 
\frac{\left(\frac{1}{2}+\frac{\parity}{4}-it\right)_m}{(1 -2it)_m} 
= 
\frac{\left(s\right)_m \left(1-s\right)_m}{(1+\epsilon)_m m!} 
\frac{\left(\frac{1}{2}+\frac{\parity}{4}-it\right)_m 
(1+\epsilon)_m}{(1)_m (1-2it)_m} 
\end{equation} 
with $ \epsilon > 0 $ sufficiently small. 
As before the second factor on the right-hand side can be bounded 
in norm by 1, and it is straightforward to see that the first factor
is real and positive
so 
\begin{equation*}
\abs{{}_3F_2 \bigg(\frac{1}{2}+\frac{\parity}{4}-it, s, 1-s; 1, 1 -2it; 1\bigg)} 
 \leq  
{}_2F_1 \bigg(s, 1-s; 1+\epsilon; 1\bigg).
 \end{equation*}
The last hypergeometric function equals (see \cite[(1), p.~2]{Bailey:1964a}) 
\begin{equation*}\frac{\Gamma(1+\epsilon) \Gamma(\epsilon)}{\Gamma(\frac{1}{2}+\epsilon+iu) 
\Gamma(\frac{1}{2}+\epsilon-iu)},\end{equation*}
 and the first statement now follows from Stirling's
formula. The second statement follows from plugging $u=0$ in the above
argument. 
\end{proof} 
\begin{remark} 
A similar bound is given in \cite{Jakobson:1994a}, Claim~3.4, p.~1499. 
\end{remark} 
\begin{lemma} \label{estimateM} 
Let $\parity\in\{\pm 1\}$. For  $ u, t \in \R $ 
we have 
\begin{equation*} 
 \frac{1}{|\Gamma(\frac{1}{2}+\frac{\parity}{4 }+it)|^2} 
\int_0^\infty y^{-1/2+iu} \left|W_{\frac{\parity}{4 }, it} (y)\right|^2 
\frac{dy}{y}=O((1+\abs{t})^{-1/2}),
\end{equation*} as $\abs{t}\to\infty$. The implied constant is uniform
in $u$.
\end{lemma}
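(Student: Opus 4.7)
The plan is to compute the Mellin integral explicitly via the formula for the Mellin transform of a product of two Whittaker functions (the same Gradshteyn-Ryzhik identity as used in the proof of Lemma~\ref{nicoleresult}), and then extract asymptotics using Stirling's formula together with the hypergeometric bound from Lemma~\ref{estimate_hypergeometric}. Since $\parity/4$ is real and $t$ is real, $W_{\parity/4,it}(y)$ is real-valued on $\R^+$; using the symmetry $W_{\mu,\nu}=W_{\mu,-\nu}$, we can write
$$\abs{W_{\parity/4,it}(y)}^2 = W_{\parity/4,it}(y)\,W_{\parity/4,-it}(y),$$
so that the integral becomes a Mellin transform of a product of two Whittaker functions with common first parameter $\parity/4$ and opposite second parameters $\pm it$. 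Applying the formula with Mellin variable $s=1/2+iu$ yields an expression of the form $A_+(u,t)\,F_+ + A_-(u,t)\,F_-$, where each $F_{\pm}$ is a $\hyp$ value at unit argument of precisely the shape appearing in Lemma~\ref{estimate_hypergeometric}, and $A_\pm(u,t)$ is a product/quotient of Gamma functions including $\Gamma(\tfrac{1}{2}+iu\pm 2it)$, $\Gamma(\mp 2it)$, and $\Gamma(\tfrac{1}{2}\mp\parity/4\mp it)^{-1}$.

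I would then divide by $\abs{\Gamma(\tfrac{1}{2}+\parity/4+it)}^2 \sim 2\pi|t|^{\parity/2}e^{-\pi|t|}$ and apply Stirling to each Gamma factor in $A_{\pm}$. The key observation is that all exponential pieces conspire to cancel: $\abs{\Gamma(\tfrac{1}{2}+iu\pm 2it)}\sim \sqrt{2\pi}\,e^{-\pi|u\pm 2t|/2}$, $\abs{\Gamma(\mp 2it)}\sim \sqrt{\pi/|t|}\,e^{-\pi|t|}$, and $\abs{\Gamma(\tfrac{1}{2}\mp\parity/4\mp it)}\sim \sqrt{2\pi}\,|t|^{\mp\parity/4}e^{-\pi|t|/2}$. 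Combining these with the $e^{\pi|u|}$ growth permitted by Lemma~\ref{estimate_hypergeometric} and performing a triangle-inequality case split on whether $|u|\leq 2|t|$ or $|u|\geq 2|t|$ shows that the exponential factors vanish, leaving polynomial pieces which collapse to $O((1+|t|)^{-1/2})$ uniformly in $u$. The $|u|^{-2\varepsilon}$ gain from Lemma~\ref{estimate_hypergeometric} absorbs the polynomial $u$-growth of the numerator $\Gamma$ factors.

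The main obstacle will be the careful bookkeeping of the exponential pieces and the verification of uniformity in $u$. The argument runs parallel to that of Lemma~\ref{nicoleresult}, but now both Whittaker second-indices depend on $t$ (rather than one being a fixed Maass-form parameter), so more $t$-oscillating Gamma factors enter and the exponential accounting is more delicate. In particular, the transitional regime $|u|\sim 2|t|$ must be checked to ensure that neither of the Gamma factors $\Gamma(\tfrac{1}{2}+iu\pm 2it)$ causes a problematic loss; since these have no poles on the critical line, polynomial bounds suffice, and the polynomial in $|u|$ ultimately cancels against the $|u|$-gain from the hypergeometric estimate.
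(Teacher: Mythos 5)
There is a genuine gap. The paper's proof begins with a one\-/line observation that your proposal misses and that makes all of your delicate bookkeeping unnecessary: since $\abs{y^{iu}}=1$ for $y>0$, the integral $I_{\parity,t}(u):=\int_0^\infty y^{-1/2+iu}\abs{W_{\parity/4,it}(y)}^2\frac{dy}{y}$ satisfies $\abs{I_{\parity,t}(u)}\leq I_{\parity,t}(0)$ trivially, so one may assume $u=0$ and uniformity in $u$ is free. At $u=0$ the paper then does exactly what you describe (Gradshteyn--Ryzhik 7.611.7, a Bailey/Thomae transformation, the \emph{second} part of Lemma~\ref{estimate_hypergeometric}, and Stirling).

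Your route for general $u$, relying on the first part of Lemma~\ref{estimate_hypergeometric}, does not close. Carrying out the computation, the first term of the Mellin formula becomes, after the Thomae transformation, $\frac{\Gamma(\frac12+iu)\Gamma(\frac12-iu)\Gamma(\frac12+iu-2it)\Gamma(2it)}{\Gamma(\frac12-\frac{\parity}{4}+it)\Gamma(\frac12-\frac{\parity}{4}-it)}\cdot\hyp(\frac12+\frac{\parity}{4}-it,\frac12+iu,\frac12-iu;1,1-2it;1)$. The exponential size of the Gamma prefactor, divided by $\abs{\Gamma(\frac12+\frac{\parity}{4}+it)}^2\asymp \abs{t}^{\parity/2}e^{-\pi\abs{t}}$, is $e^{\pi\abs{t}-\pi\abs{u}-\pi\abs{u-2t}/2}$. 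Since $\abs{u}+\tfrac12\abs{u-2t}\geq\abs{t}$ with margin only $\tfrac12\abs{u}$, the prefactor alone gives $O(\abs{t}^{-1/2}e^{-\pi\abs{u}/2})$ --- fine --- but multiplying by the permitted growth $e^{\pi\abs{u}}\abs{u}^{-2\epsilon}$ of Lemma~\ref{estimate_hypergeometric} leaves an uncancelled factor $e^{\pi\abs{u}/2}$, which near $\abs{u}=2\abs{t}$ is as large as $e^{\pi\abs{t}}$. So the "exponential pieces" do \emph{not} conspire to cancel, and no triangle\-/inequality case split on $\abs{u}\lessgtr 2\abs{t}$ repairs this: the failure occurs throughout $0<\abs{u}<4\abs{t}$, not only in a thin transitional range. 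The root cause is that the bound $e^{\pi\abs{u}}\abs{u}^{-2\epsilon}$ of Lemma~\ref{estimate_hypergeometric} is obtained by discarding all decay coming from the $t$\-/dependent Pochhammer ratios, so it is uniform in $t$ but enormously lossy when $t$ is large; to make your argument work you would need a genuinely joint $(u,t)$ asymptotic for this $\hyp$ (roughly $O(e^{\pi\abs{u}/2})$ in the relevant range), which is not available in the paper and is not a routine Stirling exercise. Use the positivity reduction to $u=0$ instead.
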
 
\begin{proof} 
Set 
\begin{displaymath} 
I_{\parity, t} (u) := 
\int_0^\infty y^{-1/2+iu} \left|W_{\frac{\parity}{4} , it} (y)\right|^2 
\frac{dy}{y}. 
\end{displaymath} 
Since $\abs{I_{n, t} (u)}\leq {I_{n, t}} (0)$, we assume that $u=0$. 
By \cite[Formula 7.611~7., p.~821]{GradshteynRyzhik:2007a} we get 
\begin{eqnarray*} 
I_{n, t} (0) 
& = & 
\frac{\Gamma(\frac{1}{2}-2it) \Gamma(\frac{1}{2}) \Gamma(2it)}{\Gamma(\frac{1}{2} - 
\frac{\parity}{4}  + it) \Gamma(1 - \frac{\parity}{4} 
 - it)} \\ 
& & \times 
\ {}_3F_2 (\frac{1}{2}-2it, \frac{1}{2}, \frac{1}{2} - \frac{\parity}{4}  - it; 
1 - 2it, 1 - \frac{\parity}{4}  - it; 1) \\ 
& & 
+ 
\frac{\Gamma(\frac{1}{2}+2it) \Gamma(\frac{1}{2}) \Gamma(-2it)}{\Gamma(\frac{1}{2} 
- \frac{\parity}{4}  - it) \Gamma(1 - \frac{\parity}{4} 
 + it)} \\ 
& & 
\times 
\ {}_3F_2 (\frac{1}{2}+2it, \frac{1}{2}, \frac{1}{2} - \frac{\parity}{4}  + it; 1 + 2it, 
1 - \frac{\parity}{4} + it; 1).  
\end{eqnarray*} 
It suffices to consider the first term since the second term differs 
from the first one only by the sign of $ t $. Using the transformation 
formulae of \cite[p.~18]{Bailey:1964a}, as in the proof of Lemma \ref{nicoleresult}  we see that 
\begin{align*} 
{}_3F_2 & (\frac{1}{2}-2it,\frac{1}{2} ,\frac{1}{2}  - \frac{\parity}{4}  - it; 
1 - 2it, 1 - \frac{\parity}{4}  - it; 1) \\ 
& = 
\frac{\Gamma(1 - \frac{\parity}{4}-it) 
\Gamma(\frac{1}{2})}{\Gamma(\frac{1}{2}-\frac{\parity}{4}-it)} 
{}_3F_2 (\frac{1}{2}+\frac{\parity}{4}-it, \frac{1}{2}, \frac{1}{2}; 1, 1 -2it; 1).
\end{align*} 
By the second part of Lemma \ref{estimate_hypergeometric} the
hypergeometric series is bounded and we find -- by bounding all the Gamma
functions using Stirling -- that
\begin{equation*}\abs{I_{\parity,t}(0)} =O\left(\frac{\Gamma(1 - \frac{\parity}{4}-it) 
}{\Gamma(\frac{1}{2}-\frac{\parity}{4}-it)} \frac{\Gamma(\frac{1}{2}-2it)  \Gamma(2it)}{\Gamma(\frac{1}{2} - 
\frac{\parity}{4}  + it) \Gamma(1 - \frac{\parity}{4} 
 - it)}\right)=O(e^{-\pi\abs{t}}\abs{t}^{-\frac{1}{2}+\frac{p}{2}})
\end{equation*} 
as $ \abs{t} \rightarrow \infty $, which gives the result.
\end{proof} 
{\bf{Acknowledgements. } }We thank Gautam Chinta, Adrian Diaconu and
Valentin Blomer for useful discussions about multiple Dirichlet
series. Also we thank the referees for several helpful comments.
\bibliographystyle{abbrv}
\def\cprime{$'$}

\end{document}